\documentclass[11pt]{amsart}

\usepackage[numbers]{natbib}
\usepackage{amsmath,amssymb,amsfonts,amsthm}
\usepackage{geometry}
\usepackage{graphicx}
\usepackage{booktabs}
\usepackage{bm}
\usepackage{longtable}
\usepackage{tikz}
\usetikzlibrary{positioning,fit,backgrounds}
\usepackage{xcolor}
\usepackage{hyperref}

\subjclass[2020]{Primary 93E20, 91A65; Secondary 49L12, 91B33, 91A28}

\newcommand{\R}{\mathbb{R}}
\newcommand{\E}{\mathbb{E}}
\newcommand{\Pprob}{\mathbb{P}}
\newcommand{\Filt}{\mathcal{F}}
\newcommand{\Gfilt}{\mathcal{G}}
\newcommand{\Tr}{\operatorname{Tr}}
\newcommand{\diff}{\mathrm{d}}

\newcommand{\Sym}{\mathbb{S}}
\newcommand{\Acal}{\mathcal{A}}
\newcommand{\Ucal}{\mathcal{U}}
\newcommand{\Hcal}{\mathcal{H}}
\newcommand{\Ical}{\mathcal{I}}
\newcommand{\Dcal}{\mathcal{D}}
\newcommand{\Lcal}{\mathcal{L}}
\newcommand{\Vcal}{\mathcal{V}}

\newcommand{\Pmax}{P^{\max}}

\newcommand{\Nu}{\mathcal{N}}

\theoremstyle{plain}
\newtheorem{theorem}{Theorem}
\newtheorem{lemma}{Lemma}
\newtheorem{proposition}{Proposition}
\newtheorem{corollary}{Corollary}

\theoremstyle{definition}
\newtheorem{definition}{Definition}
\newtheorem{assumption}{Assumption}

\theoremstyle{remark}
\newtheorem{remark}{Remark}

\begin{document}


\title[Continuous-Time Information--Mechanism Control]{Continuous-Time Information--Mechanism Control}

\author[F. Sezer]{Furkan Sezer*}
\thanks{*Texas A\&M University, College Station, TX, USA, email: furkan.sezer@tamu.edu}

\begin{abstract}
In a continuous-time stochastic Stackelberg differential game, a leader steers strategic followers through the information structure and a transfer mechanism, not the dynamics. We pose two problems, neither formulated as a control problem with strategic followers: \emph{information control}, where the leader commits only to a disclosure policy, and \emph{information--mechanism control}, which adjoins transfers. The first is equilibrium-constrained and admits no dynamic programming principle; the second is tractable, and the transfer is why: alignment makes the lower level a potential game, collapsing the bilevel problem to one stochastic control problem with an exact first-order condition. Disclosure precision becomes a control input and the belief a controlled state obeying a Riccati equation. The latent environment is a jump-diffusion whose belief filter is exactly finite-dimensional under publicly observed epochs. A marginal-contribution transfer makes truthful reporting dominant and efficient action the Nash response. Equilibrium feedback is saturated on strictly convex components and bang-bang on linear ones, hence discontinuous. The master value is the unique viscosity solution of a partial integro-differential Hamilton--Jacobi--Bellman equation whose nonlocal term carries the epochs; verification holds without smoothness, and semiconcavity makes the switching set null, giving a well-posed Filippov closed loop. Instantiated on multi-area power systems, the levers are complements under a one-factor common shock. Calibrated to 2021 Winter Storm Uri, coupling removes $7.4\%$ of social cost relative to autarky, rising to $35\%$ at a $10$-gigawatt tie, and disclosure is worth $8.7\%$; under a European renewable-drought calibration it is worth $37\%$ under autarky and $48\%$ under coupling.
\end{abstract}

\keywords{Stackelberg differential games, stochastic optimal control, information control, information-mechanism control, Hamilton--Jacobi--Bellman equations, nonlinear filtering, multi-area power systems}

\maketitle

\section{Introduction}
This paper develops a stochastic optimal control framework  over Stackelberg differential games in which the control acts through the \emph{information structure} and an incentive \emph{mechanism}, not through the dynamics directly, and the system evolves according to the equilibrium response of strategic agents under uncertainty about a latent environmental state. The coordination of interconnected power systems under extreme weather is the motivating application and testbed.

The object we study are \emph{stochastic Stackelberg differential games}. A leader commits, at time zero, to a feedback disclosure policy and a transfer rule; a finite set of followers then respond through the Nash feedback equilibrium of a coupled system of Hamilton--Jacobi--Bellman equations driven by a common latent state. The leader's two instruments are the information structure, namely the precision of a committed Gaussian public-signaling channel whose belief consequences are propagated by an exact nonlinear filter, and a marginal-contribution mechanism that aligns the followers' incentives. Both are designed jointly and in continuous time, and the analysis is carried out in the language of feedback control: dynamic programming for the leader's value, an exact first-order (envelope) condition that survives the bilevel coupling, and a viscosity-solution verification valid for the non-smooth, bang-bang feedback the game produces. We refer to this joint object as \emph{information--mechanism control}. To our knowledge it is the first formulation of information design~\cite{BergemannMorris2019}  and mechanism design~\cite{BergemannValimaki2019} as a \emph{single} continuous-time stochastic control problem, as distinct from a static or single-instance design, a discrete-time dynamic mechanism, a population (mean-field) persuasion problem, or a purely information-theoretic signaling game. That continuous-time, filtered, viscosity-verified treatment is what the title indicates.

The two instruments are the two levers the control literature identifies for coordinating strategic agents indirectly, through their beliefs and their payoffs rather than through the system dynamics: \emph{information control} and \emph{incentive design}~\cite{BasarHayakawaIshiiZhu2026}. Section~\ref{sec:icdg} poses information control of a differential game as a control problem, which the literature carrying that name has not done. Stated in that generality it is hard, and for reasons independent of the application: the leader's feasible set is defined implicitly by the followers' equilibrium rather than by explicit constraints, the state is a belief and so is generically a measure, the equilibrium need not be unique, and the leader's first-order condition inherits the sensitivity of the followers' equilibrium to disclosure.

The paper's organizing observation is that the \emph{second} instrument is what makes the first tractable. A transfer rule that aligns the followers turns the lower level into a potential game whose potential is the social cost. The equilibrium then becomes unique and efficient, the bilevel program collapses to a single stochastic control problem, an envelope identity is restored, and with it dynamic programming and a verification theorem. Enlarging the leader's choice set makes the problem easier, not harder, which is why we study the joint object rather than disclosure alone. Under alignment the disclosure precision becomes a genuine control input and the belief becomes a controlled state: its covariance obeys a Riccati equation driven by that precision, so the leader steers uncertainty at a cost that is quadratic in the disclosure effort, exactly as an input cost is in classical regulation. Letting the precision vary dynamically in this way is the case that the closest continuous-time treatment, an ergodic linear-quadratic persuasion model, records as open~\cite{aidbonesini2025}. 

We are explicit about the scope of the information layer. Classical information design optimizes over all signal structures, and a revelation principle then permits attention to be restricted to obedient action recommendations. We do not take that route and claim no revelation principle here. Our leader designs a \emph{channel} rather than a message protocol: it commits to a public, noisy observation of the latent state and chooses its precision, so the followers receive no recommendations and face no obedience constraints. This is a restriction, not a generalization, and the value we compute is therefore an upper bound on what an unconstrained information designer could achieve. It is also what makes the belief a finite-dimensional state with a Riccati flow, and it is what enforces commitment structurally, since a channel once built emits signals mechanically. The revelation principle does appear in this paper, but on the other layer: the transfer mechanism is a direct mechanism in which areas report their private costs and truthful reporting is dominant (Theorem~\ref{thm:ic}).

Multi-area power-system coordination under extreme weather are provided as the worked example (Section \ref{sec:power}). Interconnected electricity markets provide substantial economic benefits and cross-border reliability enhancement by enabling access to geographically diverse generation fleets and sharing reserve capacities across transmission interties. Traditional market mechanisms, typically settled via static Locational Marginal Pricing (LMP) formulations or deterministic DC Optimal Power Flow (DC-OPF) metrics, operate effectively under regular load variations and stationary environmental baseline parameters. However, when subjected to extreme, non-linear weather shocks---such as the catastrophic Winter Storm Uri in 2021---these conventional paradigms break down under a combination of physical infrastructure failures and acute informational friction.

During Winter Storm Uri, freezing temperatures induced simultaneous wellhead cutoffs, natural gas pipeline depressurization, and localized generation trips across neighboring independent system operators (such as ERCOT, SPP, and MISO). Crucially, the absence of an overarching, forward-looking information structure meant that independent regional authorities lacked visibility into adjacent structural capacities, motivating strategic reserve withholding, uncoordinated emergency shedding, and extreme uncompensated price spikes.

Instantiated on this setting, the framework couples information control (a committed public-signaling channel) and mechanism design (marginal-contribution transfers) over a jump-diffusion state space: the regulator acts as a Stackelberg leader choosing the precision of public advisories and the transfer rule, while each regional operator controls local dispatch and inter-area flows under physical capacity bounds that degrade as environmental hazards intensify. Under a one-factor common-shock structure the two levers are complements (Proposition~\ref{prop:compl}), so designing them separately is not optimal there.

\paragraph{Contributions and main results.} We establish the following.
\begin{enumerate}\itemsep2pt
\item \textbf{Information control (IC) of a differential game.} We define the general problem, in which a leader commits to a disclosure policy and strategic followers respond through the Nash equilibrium it induces, and identify five structural obstructions that make it an equilibrium-constrained program rather than a control problem (Section~\ref{sec:icdg}).

\item \textbf{The belief as a controlled state.} Under alignment the disclosure precision is a control input, the belief pair is the state, and the Riccati flow is the state equation, priced by a quadratic input cost. Making that precision dynamic is the case left open by the closest continuous-time treatment~\cite{aidbonesini2025}.
\item \textbf{Well-posed belief dynamics.} Under publicly observed disruption epochs (Assumption~\ref{ass:observed_jumps}) the conditional law of the latent jump-diffusion is exactly Gaussian: the belief is summarized \emph{without approximation} by the mean--covariance pair $(\hat X_t,\Pi_t)$, with $\Pi_t$ following a Riccati flow with observed resets (Lemma~\ref{lem:proj_filter}).
\item \textbf{Equilibrium characterization.} For a fixed disclosure policy the lower-level game has a Nash feedback equilibrium: components in which the cost is strictly convex are saturated, components entering linearly are bang-bang, so the feedback is generically discontinuous (Theorem~\ref{thm:nash_feedback}).
\item \textbf{Incentive compatibility.} A Groves transfer makes truthful reporting a dominant strategy and the efficient action profile the induced Nash response; the Green--Laffont efficiency/incentive-compatibility/budget-balance tradeoff is characterized, with the AGV route to balance in expectation (Theorems~\ref{thm:nash_exist} and \ref{thm:ic}).

\item \textbf{Master control problem.} The leader's Stackelberg problem is characterized by a Hamilton--Jacobi--Bellman equation (Theorem~\ref{thm:master_robust}).

\item \textbf{The mechanism is what makes IC tractable.} Adjoining an incentive-aligning transfer removes four of the five obstructions at once: it collapses the bilevel program to a single stochastic control problem (Theorem~\ref{thm:collapse}) whose first-order condition is exact and whose value is verified (Theorem~\ref{thm:verify}). The transfer is a source of tractability, not merely of fairness.

\item \textbf{Viscosity characterization.} The value function is the unique viscosity solution of the master HJB equation (existence and comparison); verification holds without smoothness, and semiconcavity yields a Lebesgue-null bang-bang switching set and a well-posed Filippov closed loop (Theorems~\ref{thm:existence}, \ref{thm:comparison}, \ref{thm:viscverify}; Proposition~\ref{prop:semiconcave}).

\item \textbf{Power-system instantiation.} The general framework is instantiated on multi-area power coordination, yielding explicit saturated generation and bang-bang cross-border routing under extreme-weather capacity degradation (Section~\ref{sec:power}). A scalar two-agent case is solved in closed form (Proposition~\ref{prop:special}), and under a one-factor common-shock structure, information control and coupling are shown to be \emph{complements} (Proposition~\ref{prop:compl}).

\item \textbf{Real-data calibration.} Solved numerically on a 2021 Winter Storm Uri calibration, the joint HJB solution verifies the non-smooth viscosity theory and quantifies the welfare value of coupling ($7.4\%$) and disclosure ($8.7\%$). A second experiment, calibrated to European renewable droughts, isolates the information channel: with forecast uncertainty rather than physical capacity as the binding constraint, disclosure is worth $37\%$ under autarky and $48\%$ under coupling. That the second figure exceeds the first is the complementarity of Proposition~\ref{prop:compl} (Section~\ref{sec:numerics}).

\end{enumerate}

\noindent\fbox{\parbox{0.97\linewidth}{\small\textbf{Problem at a glance.} A leader commits to an information structure and a transfer rule; a finite set of strategic agents then respond through the Nash equilibrium these induce. The controlled Markov state is the triple $(\hat X_t,\Pi_t,Y_t)$: the belief mean $\hat X_t$ and error covariance $\Pi_t$ of the latent jump-diffusion, a \emph{finite-dimensional} sufficient statistic that is the exact conditional law under publicly observed disruption epochs, together with the agents' states $Y_t$. The leader's two instruments are the disclosure gain $\rho_t$, the precision of the public signal, and the marginal-contribution transfer; the leader does \emph{not} act on the dynamics. Each agent controls its own actions, which are endogenous through the lower-level Nash equilibrium. The transfer is not free: fixing it to the marginal-contribution form is what aligns incentives, so the leader minimizes expected social cost over $\rho$ alone, anticipating the agents' equilibrium response $u^\star(\rho)$. Section~\ref{sec:power} instantiates this on multi-area power-system coordination.}}

The contributions are structured as follows. Section~\ref{sec:related} situates the work in the information-design, information-control, mechanism-design, power-market, and stochastic-dynamic-game literatures. Section~\ref{sec:icdg} states the general problem, information control of a differential game, records the structural obstructions that make it an equilibrium-constrained program, and shows that adjoining a transfer rule removes them; this is the paper's organizing observation, and everything that follows depends on it. Section~\ref{sec:state} formalizes the latent environmental and infrastructure state space via jump-diffusion dynamics. Section~\ref{sec:signal} characterizes the public advisory signaling layer and the associated exact finite-dimensional belief filter under publicly observed disruption epochs. Section~\ref{sec:game} sets up the lower-level game through coupled Hamilton--Jacobi--Bellman (HJB) equations and characterizes the Nash feedback strategies. Section~\ref{sec:transfers} develops the continuous marginal-contribution transfers and analyzes incentive compatibility and the budget-balance tradeoff. Section~\ref{sec:master} derives the Stackelberg master problem and its HJB equation. Sections~\ref{sec:bilevel} and~\ref{sec:viscosity} verify the bilevel Stackelberg structure and treat the non-smooth (bang-bang) value function in the viscosity sense. Section~\ref{sec:power} instantiates the primitives on multi-area power systems and works a scalar two-area special case in closed form and establishes the information--coupling complementarity under common risk, the application carried through the numerical study; a reader interested only in the general theory may skip it. Section~\ref{sec:numerics} reports the numerical study: an HJB solve under a real Winter-Storm-Uri calibration with a tie-capacity sensitivity analysis (Section~\ref{sec:hjbsolve}), and a common-risk disclosure experiment calibrated to European renewable-drought data, together with the clarification on how dispatch and unit commitment could enter the framework. Section~\ref{sec:conclusion} concludes.

\section{Related Work and Positioning}\label{sec:related}
This work sits at the confluence of several literatures: information design and information control, mechanism design for electricity markets, disclosure in power systems, and stochastic dynamic games in energy. We review each and then state how the present framework differs.

\subsection{Bayesian persuasion, information design, and information control}\label{sec:related_gap}
The disclosure layer descends from the Bayesian persuasion and information design program of Kamenica and Gentzkow~\cite{KamenicaGentzkow2011}, Rayo and Segal~\cite{RayoSegal2010}, and Bergemann and Morris~\cite{BergemannMorris2019,BergemannMorris2016}, with multi-receiver disclosure developed by Mathevet, Perego, and Taneva~\cite{MathevetPeregoTaneva2020} and dynamic variants by Ely~\cite{Ely2017} and Zhang and Zhu~\cite{ZhangZhu2021}. The control-theoretic lineage of incentive and signaling design under dynamic information traces to Ba\c{s}ar~\cite{Basar1984}. In the control and networked-systems literature the same lever is called \emph{information control}~\cite{BasarHayakawaIshiiZhu2026}: strategic communication as a hierarchical game~\cite{akyol2016information,VelichetiBastopcuBasar2025,9303935}, Bayesian persuasion and information design with quadratic costs~\cite{sayin2021bayesian, sezer_robust2023}, public signalling in routing and congestion games~\cite{massicot2019public,zhu2018information,das2017reducing,le2016information,peng2019bayesian}, and disclosure in queueing systems~\cite{SimhonHayelStarobinskiZhu2016}.

Across this body of work the belief is never a \emph{controlled state}. The economic program is static and proceeds by either concavification or Bayes correlated equilibrium. The information-theoretic strand poses one-shot hierarchical signalling games with misaligned senders and designs an encoding map, not a control input. The transportation strand designs signals over nonatomic population games on a fixed network. Zhang and Zhu \cite{ZhangZhu2021} treat a genuinely dynamic environment, but in discrete time and with implementation-theoretic rather than control-theoretic tools. In none of these is there a state equation for the belief, an input cost on disclosure, a dynamic-programming equation for the designer's value, or a verification theorem. Information control is, in these works, a name for a coordination mechanism rather than a control problem.

The exception is A\"id, Bonesini, Callegaro, and Campi~\cite{aidbonesini2025}, who pose continuous-time persuasion as a partially observed Stackelberg problem with filtering and prove a verification theorem. Their sender designs an information \emph{device}, a Gaussian channel whose precision she selects, which is the modelling of the information lever we adopt. Decisively, however, their device is chosen once and for all: the gain is a constant, the error covariance is a fixed trajectory, and the sender's problem reduces to a static optimization against the stationary law. The belief is filtered but not steered. They record the case in which the sender \emph{dynamically alters the information rate} as an open problem even in the linear-quadratic Gaussian setting. 

Within control systems, the closest program is that of Y\"uksel and Ba\c{s}ar~\cite{yuksel2024stochastic}, where the information structure is itself designed: optimal causal encoders admit a separation structure, the conditional law is the sufficient statistic, and existence of optimal quantization policies follows by measurable selection. That design problem is a \emph{team}, with encoder and controller minimizing a common cost, so no agent has an incentive to misreport and no transfer is required. Here the followers are strategic and their equilibrium is itself the constraint, which is what makes the mechanism layer load-bearing; and because the disruption epochs are observed, the sufficient statistic collapses from the space of measures to a mean--covariance pair, replacing an abstract Markov decision problem by a Hamilton--Jacobi--Bellman equation.

Our proposed framework establishes the solution to this open problem in a substantially more general setting. A time-varying disclosure gain promotes the error covariance from a fixed trajectory to a controlled state obeying a Riccati flow, and replaces their static outer optimization by a dynamic-programming problem; we do so over a finite horizon and a jump-diffusion latent state, for a finite set of strategically coupled followers rather than a single receiver or a mean field, and we couple the result to a mechanism.

Joint design of both layers has one precedent, Heydaribeni and Anastasopoulos~\cite{9683640}, who design information and mechanism together for a queue with heterogeneous users. We share their premise that the two should be designed jointly rather than in isolation. Their setting carries no SDE-driven latent state, no filter, no HJB characterization of the designer's value, and no verification for non-smooth feedback; here the joint design is a stochastic Stackelberg differential game in which incentive alignment collapses the bilevel problem to a single dynamic-programming object.

\subsection{Mechanism and incentive design}
Marginal-contribution (Vickrey--Clarke--Groves) mechanisms have been adapted to wholesale markets by Xu and Low~\cite{XuLow2017}, with non-convexity and collusion analyzed by Sessa, Walton, and Kamgarpour~\cite{SessaWaltonKamgarpour2017} and core-selecting refinements by Karaca and Kamgarpour~\cite{KaracaKamgarpour2020}. The static decentralized market-coupling mechanism of Garcia, Khatami, Eksin, and Sezer~\cite{GarciaKhatamiEksinSezer2022} pays each area its marginal contribution to the coupling, rendering truthful reporting a Nash equilibrium of an iterative DC-OPF clearing; the present paper adopts that transfer and lifts it to a continuous-time, capacity-constrained coupling game for power systems in Section \ref{sec:power}. 

The canonical dynamic counterpart is the dynamic pivot mechanism of Bergemann and V\"alim\"aki~\cite{BergemannValimaki2010,BergemannValimaki2019}, which implements the efficient allocation under privately evolving types by paying each agent its marginal contribution. Our transfer is its continuous-time analogue and we credit it as such; the difference is the setting, since the dynamic pivot is posed in discrete time with an \emph{exogenous} information structure, whereas here the type process is a jump-diffusion observed through a channel whose precision is itself designed. In a dynamic stochastic control setting, Ma and Kumar~\cite{ma2018lqg} construct layered VCG payments for LQG agents; our Groves transfer achieves dominant-strategy truthful reporting without intertemporal layering, because the efficiency collapse of Section~\ref{sec:bilevel} reduces the bilevel problem to a single social planner whose instantaneous marginal contribution is well defined at each $t$.  Closest in venue and application, Murao et~al.~\cite{MURAO201895} obtain social-welfare maximization, incentive compatibility, and individual rationality for a linear-Gaussian power network in which each agent holds a static type parameter. Here the private object is instead a latent jump-diffusion whose belief covariance the leader controls through signal precision, and the capacity constraints their framework sets aside are what produce the bang-bang feedback treated in Section~\ref{sec:viscosity}. All of these designs trade off efficiency, dominant-strategy incentive compatibility, and budget balance, consistent with the Green--Laffont impossibility~\cite{GreenLaffont1979}, with the expected-externality route~\cite{AGV1979} recovering balance in expectation. We retain the Groves~\cite{Groves1973} efficiency property but couple it to the information layer, and show (Section~\ref{sec:bilevel}) that this alignment is what collapses the leader's bilevel problem to a single verifiable control problem. For a general treatment of the incentive design in control systems, see also \cite{bauso2026incentive}.

\subsection{Information disclosure and transparency in power markets}
A largely empirical and institutional literature studies transparency in electricity markets: surveys of disclosure mechanisms across market models~\cite{InfoDisclosureReview2018}, the competition and efficiency effects of public information~\cite{HolmbergWolak2018}, and two-stage market designs that fold robustness into real-time incentive signals~\cite{GuoHanZhouHug2022}. This body of work establishes \emph{that} disclosure matters but treats it descriptively or through static games; none poses the system operator's advisory as a designed control whose optimal intensity is solved for. We close this gap by making transparency an optimized Stackelberg control with an explicit, computable value of information.

\subsection{Stochastic dynamic games and mean-field models in energy}
Dynamic games in energy have largely been studied in the large-population/mean-field regime: electric-vehicle and demand coordination~\cite{CouilletEtAl2012,MaCallawayHiskens2013}, mean-field dynamic demand management with bang-bang switching feedback~\cite{BagagioloBauso2014}, built on the foundational mean-field framework~\cite{HuangCainesMalhame2007,CarmonaDelarue}. Our setting is complementary: a \emph{finite} set of areas coordinated by a central sender, where the governing objects are the few-player coupled HJB system and the bang-bang inter-area transfers rather than a population mean field. The belief layer uses nonlinear filtering~\cite{BainCrisan, LiptserShiryaev}; under publicly observed disruption epochs the filter is exactly finite-dimensional (Section~\ref{sec:signal}).

Sun and Zhu~\cite{SUN20188} derive the Hamilton--Jacobi--Isaacs equation of a differential game in the viscosity sense, via non-anticipating strategies and the dynamic programming property. The chain here is the same; the master equation differs in carrying a nonlocal term from the observed epochs, and in admitting a discontinuous equilibrium rather than a saddle point.

\subsection{Positioning and contribution}
We are careful about what is and is not claimed. The two levers are not ours: information control and incentive design are established coordination mechanisms in the control literature~\cite{BasarHayakawaIshiiZhu2026}, and the efficient dynamic mechanism is developed in discrete time~\cite{BergemannValimaki2010,BergemannValimaki2019}. What is new is the theory. Information control has been named but not formulated as a control problem, and we formulate it: the belief is the state, the disclosure precision the input, the Riccati flow the state equation, with a Hamilton--Jacobi--Bellman characterization and a viscosity verification valid for the non-smooth equilibrium feedback, over a finite horizon and a jump-diffusion latent state rather than the ergodic linear-quadratic regime in which the dynamic information rate remains open~\cite{aidbonesini2025}. We then couple that controller to a Groves mechanism, show that the coupling is precisely what makes the leader's problem tractable, and show that under a one-factor common-shock structure the two levers are complements, hence not separately optimizable there. Uncertainty here enters dynamically, through a designed disclosure channel inside a Stackelberg control problem~\cite{BasarOlsder} whose verification rests on viscosity-solution theory~\cite{FlemingSoner}, rather than through the single-stage chance-constrained and distributionally robust formulations that dominate operation under renewable uncertainty~\cite{BienstockChertkovHarnett2014,ZhangShenMathieu2017,WiesemannKuhnSim2014}. The February 2021 Texas event motivates the model and the calibration of Section~\ref{sec:hjbsolve}~\cite{BusbyEtAl2021}.

\section{Information Control of Differential Games}\label{sec:icdg}
This section states the general problem. The multi-area model of Section~\ref{sec:power} is one instantiation of it. Throughout, \emph{information control} is understood in the game-theoretic sense: the leader steers a group of strategic agents by choosing what they are allowed to learn, not by choosing their actions and not by acting on the dynamics.

\subsection{The general problem}\label{sub:icdg_general}
Fix a filtered probability space $(\Omega,\Filt,\{\Filt_t\},\Pprob)$ satisfying the usual conditions and a horizon $T<\infty$.

\emph{Latent state.} An unobserved Markov process $X_t\in\R^n$ evolves as
\begin{equation}\label{eq:gen_X}
\diff X_t = b(t,X_t)\,\diff t + \sigma(t,X_t)\,\diff W_t + \diff J_t,
\end{equation}
where $b$ is the drift, $\sigma$ the diffusion coefficient, $W$ a Brownian motion, and $J$ a pure-jump process carrying discrete disruptions. Nothing below requires \eqref{eq:gen_X} to be linear.

\emph{Players and controlled states.} A finite set $\Acal=\{1,\dots,N\}$ of players is indexed by $a$. Player $a$ holds a state $Y_{a,t}\in\R^{d_a}$ and chooses a control $u_{a,t}\in U_a$, with
\begin{equation}\label{eq:gen_Y}
\diff Y_{a,t} = f_a\big(t,X_t,Y_t,u_t\big)\,\diff t,\qquad u=(u_1,\dots,u_N),
\end{equation}
where $f_a$ is player $a$'s state transition map, so the players are coupled through the joint control $u$ and the common latent state.

\emph{The information structure as the leader's control.} The leader chooses a \emph{disclosure policy} $\rho\in\Ucal_L$, which generates a public observation process
\begin{equation}\label{eq:gen_obs}
\diff\xi_t = h\big(t,X_t;\rho_t\big)\,\diff t + \varsigma(\rho_t)\,\diff W^\xi_t,
\end{equation}
and with it the public filtration $\Gfilt^\rho_t=\sigma(\xi_s,\,s\le t)$. The leader commits to $\rho$ at time $0$. The map $\rho\mapsto\Gfilt^\rho$ is the object being designed: the leader does not appear in \eqref{eq:gen_X} or \eqref{eq:gen_Y}.

\emph{Belief state.} Write $\mu^\rho_t := \Pprob\big(X_t\in\cdot\mid\Gfilt^\rho_t\big)$ for the induced public belief, a measure-valued process. Under $\rho$, $\mu^\rho$ is the players' sufficient statistic for the latent state.

\emph{Admissible classes.} Write $\Ucal_L$ for the admissible disclosure policies, $\Ucal_O$ for the admissible feedback strategy profiles of the players, and $\Ucal_M$ for the admissible transfer rules introduced in Section~\ref{sub:imc}. At this level of generality these are only required to be nonempty, to consist of policies adapted to the appropriate filtrations, and to render the state equations well posed; the classes are made precise for the specification of interest in Section~\ref{sec:bilevel}.

\begin{definition}[Information control of a differential game]\label{def:icdg}
Given $\rho\in\Ucal_L$, the \emph{lower level} is the differential game in which each player $a$ minimizes
\begin{equation}\label{eq:gen_cost}
\Dcal_a(u_a;u_{-a},\rho) = \E\Big[\int_0^T c_a\big(t,\mu^\rho_t,Y_t,u_t\big)\,\diff t + g_a(Y_T)\Big]
\end{equation}
over $\Gfilt^\rho_t$-adapted feedback strategies, where $c_a$ is player $a$'s running cost, $g_a$ its terminal cost, $u=(u_a)_{a}$ the joint control with $u_{-a}$ the rivals' components, and $Y_t=(Y_{a,t})_a$ the joint local state of \eqref{eq:gen_Y}. Let $\mathrm{NE}(\rho)\subseteq\Ucal_O$ denote its set of Nash equilibria. The \emph{information control problem} is the leader's problem

\begin{equation}
\label{eq:ICDG}\tag{IC}
\inf_{\substack{\rho\in\Ucal_L\\ u\in\mathrm{NE}(\rho)}}\ \E\Big[\int_0^T \Big(\textstyle\sum_{a}c_a\big(t,\mu^\rho_t,Y_t,u_t\big)+ k(\rho_t)\Big)\diff t\Big],
\end{equation}

where $k$ prices disclosure, subject to \eqref{eq:gen_X}--\eqref{eq:gen_obs} and the \emph{equilibrium constraint} $u\in\mathrm{NE}(\rho)$.
\end{definition}

\subsection{Problem characteristics}\label{sub:icdg_char}
Problem \eqref{eq:ICDG} is not a standard stochastic control problem, and it is worth recording precisely why. The following are structural features of \eqref{eq:ICDG} in its stated generality.

\begin{enumerate}\itemsep2pt
\item[(C1)] \emph{It is an equilibrium-constrained (MPEC) problem.} The leader's feasible set is defined implicitly, through the followers' equilibrium conditions, rather than by explicit constraints on $(\rho,u)$. Problem~\eqref{eq:ICDG} is therefore a mathematical program with equilibrium constraints, posed over continuous-time feedback policies.
\item[(C2)] \emph{The state is a belief, and it is generically infinite-dimensional.} The controlled state of \eqref{eq:ICDG} is the pair $(\mu^\rho_t,Y_t)$. For a general jump-diffusion \eqref{eq:gen_X} the conditional law solves a Kushner--Stratonovich equation and admits no finite-dimensional sufficient statistic, so the leader's dynamic-programming equation lives on a space of measures.
\item[(C3)] \emph{The equilibrium correspondence may be set-valued.} $\mathrm{NE}(\rho)$ need be neither a singleton nor nonempty without further structure, so \eqref{eq:ICDG} requires an equilibrium selection, and its value depends on that selection.
\item[(C4)] \emph{The leader's first-order condition carries an equilibrium-response term.} Because the followers' equilibrium responds to disclosure, the derivative of the leader's objective in $\rho$ contains the sensitivity of $\mathrm{NE}(\rho)$ to $\rho$. No envelope identity removes it in general, and the correspondence $\rho\mapsto\mathrm{NE}(\rho)$ is typically not differentiable.
\item[(C5)] \emph{Dynamic programming is not directly available.} A dynamic-programming principle for \eqref{eq:ICDG} would require the equilibrium constraint to be preserved under conditioning; the leader commits at time $0$ to a policy whose value depends on the whole equilibrium path, and the resulting problem is not, without further structure, Markov in the leader's state.
\end{enumerate}

Characteristics (C1)--(C5) are what separate information control of a \emph{game} from the control of a partially observed system, where (C1), (C3), and (C4) are pointless. They are also why we do not treat \eqref{eq:ICDG} in the stated generality.

\subsection{Information--mechanism control}\label{sub:imc}
Now give the leader a second instrument. In addition to the disclosure policy $\rho$, let it commit to a \emph{transfer rule} $\pi=(\pi_1,\dots,\pi_N)$ drawn from an admissible class $\Ucal_M$, so that player $a$'s objective becomes the transfer-adjusted criterion

\begin{equation}\label{eq:gen_cost_pi}
\Dcal^\pi_a(u_a;u_{-a},\rho) = \E\Big[\int_0^T \big(c_a(t,\mu^\rho_t,Y_t,u_t) - \pi_{a,t}\big)\,\diff t + g_a(Y_T)\Big],
\end{equation}

and write $\mathrm{NE}^\pi(\rho)$ for the Nash equilibria of the resulting game.

\begin{definition}[Information--mechanism control]\label{def:imc}
The \emph{information--mechanism control problem} is

\begin{equation}   
\label{eq:IMC}\tag{IMC}
\inf_{\substack{\rho\in\Ucal_L,\ \pi\in\Ucal_M u\in\mathrm{NE}^\pi(\rho)}}\ \E\Big[\int_0^T\Big(\textstyle\sum_a c_a(t,\mu^\rho_t,Y_t,u_t)  +  k(\rho_t)\Big)\diff t\Big],
\end{equation}

in which the leader designs the information structure and the transfer rule jointly. Transfers are internal redistributions and so do not appear in the leader's objective.
\end{definition}

The point of the second instrument is not fairness but \emph{tractability}, and this is the organizing observation of the paper. Adding the transfer enlarges the leader's choice set, so \eqref{eq:IMC} is a priori the harder-looking problem. It is in fact the easier one. If $\pi$ is chosen so that each player's transfer-adjusted objective differs from the social cost by terms not depending on that player's own control, the lower level becomes a potential game whose potential is the social cost. Three of the five obstructions then disappear at once: the equilibrium constraint becomes an inner minimization over $u$ rather than an implicit condition, so \eqref{eq:IMC} collapses from a bilevel program to a \emph{single} stochastic control problem in $(\rho,u)$, removing (C1); $\mathrm{NE}^\pi(\rho)$ becomes a singleton and coincides with the social optimizer, removing (C3); and, the inner problem being an optimization, an envelope identity applies and the leader's first-order condition in $\rho$ carries no equilibrium-response term, removing (C4). With the bilevel structure gone, dynamic programming and a verification theorem are restored, removing (C5). This is the sense in which \emph{information control with transfers is easier than information control without them}, and it is why the paper studies \eqref{eq:IMC} rather than \eqref{eq:ICDG}. Obstruction (C2) is not addressed by the mechanism; it is removed separately, by a structural assumption on the observation of jumps that renders the belief exactly finite-dimensional (Section~\ref{sec:signal}).

The results are established for \eqref{eq:IMC}: the master Hamilton--Jacobi--Bellman characterization (Theorem~\ref{thm:master_robust}), the efficiency collapse (Theorem~\ref{thm:collapse}), the bilevel verification (Theorem~\ref{thm:verify}), and the viscosity results of Section~\ref{sec:viscosity} all rest on the standing assumptions alone. Any instantiation meeting them inherits the theory without re-proof; multi-area power-system coordination (Section~\ref{sec:power}) is the one we calibrate and solve in numerical experiments (\ref{sec:numerics}).

The assumptions delimit that scope precisely. Problem \eqref{eq:IMC} presumes a coordinator with commitment, transferable utility, publicly observable disruption epochs, a linear-Gaussian public channel, and a finite set of agents whose transfer-aligned game is a potential game. Interconnected gas and water networks, spectrum sharing among licensees, and capacity pooling across cloud or datacenter operators satisfy these primitives and differ only in the choice of $(f_a,c_a)$ and the capacity envelopes; the theorems apply to them verbatim. The transfer is what carries this generality: it is what turns the lower level into a potential game and collapses the bilevel program. Where transferable utility is unavailable, as with human road users or platform audiences, the collapse of bilevel program fails and the coordinator faces the equilibrium-constrained problem \eqref{eq:ICDG}, with characteristics (C1) and (C3)--(C5) intact. That problem, general information control without a mechanism, is not addressed here.

\section{The Latent State and the Belief Layer}\label{sec:state}
Let $(\Omega, \Filt, \{\Filt_t\}_{t \ge 0}, \Pprob)$ be a filtered probability space satisfying the usual conditions. The latent state of \eqref{eq:gen_X} is denoted $X_t \in \R^n$: it collects the slowly varying conditions that shift the agents' costs and capabilities and that no agent observes directly. We now specialize the generic dynamics of \eqref{eq:gen_X} to the linear-Gaussian-plus-jump case, which is what makes the belief layer finite-dimensional.

\begin{assumption}[Latent dynamics]\label{ass:state_dynamics}
The latent state $X_t$ and an exogenous factor $Q_t \in \R^k$ satisfy the coupled jump-diffusion / Ornstein--Uhlenbeck system
\begin{align}
\diff X_t &= \left( A_t X_t + B Q_t \right) \diff t + \Sigma_t \diff W_t + \diff J_t, \label{eq:Xdyn}\\
\diff Q_t &= \Theta\!\left( \bar F - Q_t \right) \diff t + \Sigma_F \diff W^Q_t, \label{eq:Fdyn}
\end{align}
where $A_t \in \R^{n \times n}$ is the state matrix and $B\in\R^{n\times k}$ the factor loading; $\Theta \succ 0$ is the mean-reversion matrix and $\bar F$ the long-run factor mean; $W_t$ and $W^Q_t$ are independent standard Brownian motions; $\Sigma_t,\Sigma_F$ are volatility coefficients; and $J_t$ is an independent compound Poisson process with arrival intensity $\lambda$ and jump-size law $\mathcal{N}(\mathbf{0}, \Sigma_J)$. The diffusion carries gradual drift in conditions; the jump component carries abrupt disruptions.
\end{assumption}

The factor $Q_t$ is the common mode: it enters every agent's environment through the same channel, so it is the component of uncertainty that pooling cannot diversify away and disclosure can. Section~\ref{sec:power} gives the instantiation, where $X_t$ and $Q_t$ carry their concrete meanings in power systems context.

\section{The Public Disclosure Channel and the Exact Filter}\label{sec:signal}
The agents do not observe $X_t$; each records only local, partial symptoms of it. The leader (information designer) partially observes $X_t$ through the diffusive channel below and modulates a public signal $\xi_t \in \R^m$ broadcast to all agents. Throughout we work under the following standing observability assumption, which is the physically natural regime for our application and which renders the belief filter \emph{finite-dimensional and exact}.
\begin{equation}\label{eq:obs}
\diff \xi_t = \rho_t X_t \diff t + \Sigma_\xi \diff W^\xi_t,
\end{equation}
where $\rho_t \in \R^{m \times n}$ is the designer's dynamic transparency rule, $W^\xi_t$ is a standard Brownian motion independent of $(W,W^F,J)$, and $\Sigma_\xi$ is the observation-noise coefficient with $R := \Sigma_\xi \Sigma_\xi^\top \succ 0$. The public filtration is $\Gfilt_t = \sigma(\xi_s, 0 \le s \le t)$.

\begin{assumption}[Observable disruption epochs]\label{ass:observed_jumps}
The jump \emph{times} $\{\tau_k\}$ of the compound-Poisson component $J_t$ in \eqref{eq:Xdyn} are publicly observed: an abrupt loss of capability is a discrete, physically observable event, so each $\tau_k$ is a $\Gfilt_t$-stopping time. The \emph{marks} $\{\zeta_k\}\sim\mathcal N(\mathbf 0,\Sigma_J)$ are \emph{not} observed: agents see that a disruption has occurred, but its increment to the latent state is not directly measurable. Between consecutive epochs the pair $(X_t,Q_t)$ evolves as a linear-Gaussian system observed through the linear channel \eqref{eq:obs}.
\end{assumption}

This is the regime in which the covariance reset of the following lemma is the correct update, and it is the physically apt one: what a disruption does to the \emph{latent} condition of the surviving components is precisely what nobody observes. Conditionally on an epoch at $\tau_k$, the unobserved mark is independent Gaussian, so the conditional law jumps from 
\begin{align}
&\mathcal N(\hat X_{\tau_k^-},\Pi_{\tau_k^-}) \quad \text{ to }\\
&\mathcal N(\hat X_{\tau_k^-},\Pi_{\tau_k^-}+\Sigma_J). \label{eq_unob_mark}
\end{align}

The belief \emph{mean} is unchanged, since $\E[\zeta_k]=0$, and the belief \emph{covariance} inflates by $\Sigma_J$. Gaussianity is preserved, so the filter remains exact. Were the marks observed instead, the jump would be fully known, $\hat X$ would shift by $\zeta_k$, and $\Pi$ would not reset at all; that variant is also exactly filtered but carries no covariance inflation and is not the model used here.

Under Assumption~\ref{ass:observed_jumps} the conditional law of $X_t$ given $\Gfilt_t$ is \emph{exactly} Gaussian. The reason is structural: \emph{conditionally on the observed epoch path, the system is linear-Gaussian}---between consecutive observed epochs $(X_t,Q_t)$ is a linear SDE with Gaussian noise observed through the linear channel \eqref{eq:obs}, so the Kalman--Bucy filter is exact there (\cite{LiptserShiryaev}, Ch.~10; \cite{BainCrisan}, Ch.~6), and at each observed epoch the conditional law updates by an explicit Gaussian-preserving reset. Conditioning on the jump path thus removes the mixture over unknown jump timing that makes the unconditional filter infinite-dimensional; for the general theory of filtering with jump observations see Ceci and Colaneri~\cite{CeciColaneri2012}. The belief is therefore summarized \emph{without approximation} by the mean--covariance pair $(\hat X_t,\Pi_t)$ of the following lemma, which is the exact conditional law and hence a sufficient statistic.

\begin{lemma}[Exact finite-dimensional belief filter under observed epochs]\label{lem:proj_filter}
Under Assumptions~\ref{ass:state_dynamics} and~\ref{ass:observed_jumps}, the conditional law of $X_t$ given $\Gfilt_t$ is exactly $\mathcal{N}(\hat X_t, \Pi_t)$, where between observed epochs the mean and covariance evolve as
\begin{align}
\diff \hat{X}_t = \left( A_t \hat{X}_t + B \hat Q_t \right) \diff t \quad+ \Pi_t \rho_t^\top R^{-1}\!\left( \diff \xi_t - \rho_t \hat{X}_t \diff t \right), \label{eq:meanfilter}\\
\frac{\diff \Pi_t}{\diff t} = A_t \Pi_t + \Pi_t A_t^\top + \Sigma_t \Sigma_t^\top 
\;- \Pi_t \rho_t^\top R^{-1} \rho_t \Pi_t + \lambda \Sigma_J, \;\; \Pi_0 = \operatorname{Cov}(X_0), \label{eq:riccati}
\end{align}
and at each observed epoch $\tau_k$ the belief mean is unchanged while the covariance inflates by the unobserved mark's law, $\Pi_{\tau_k}=\Pi_{\tau_k^-}+\Sigma_J$  \eqref{eq_unob_mark}. Here $\hat Q_t$ is the (exact) Kalman--Bucy estimate of the conditionally Gaussian factor $Q_t$, and $R\succ0$ is the covariance of the public signal observation noise in \eqref{eq:obs}, so that $\rho_t^\top R^{-1}\rho_t$ is the Fisher information the public signal carries about $X_t$ per unit time. The term $\lambda\Sigma_J$ in \eqref{eq:riccati} is the \emph{predictive} second-moment rate used only in the unobserved-epoch approximation discussed in Section~\ref{sec:signal}; under the standing Assumption~\ref{ass:observed_jumps} it is replaced by the exact jump resets just described, so \eqref{eq:meanfilter}--\eqref{eq:riccati} hold with equality between jumps and the filter is exact, not projected.
\end{lemma}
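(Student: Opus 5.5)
The argument conditions on the observed epoch path. Because the epochs $\{\tau_k\}$ are $\Gfilt_t$-stopping times (Assumption~\ref{ass:observed_jumps}), and the counting process of $J$ is independent of $(W,W^Q,W^\xi)$ and of $X_0$, we can condition on the whole realized epoch sequence $\{\tau_k\}$, treat it as deterministic, and then verify that the resulting conditional laws are the $\Gfilt_t$-conditional laws. On each interval $[\tau_k,\tau_{k+1})$ the augmented state $Z_t=(X_t,Q_t)$ solves the linear SDE \eqref{eq:Xdyn}--\eqref{eq:Fdyn} with $\diff J_t=0$, driven by independent Brownian motions. It is observed through $\diff\xi_t=[\rho_t\ 0]Z_t\,\diff t+\Sigma_\xi\diff W^\xi_t$. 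Here $\rho$ is deterministic, or at least $\Gfilt$-predictable; in the latter case we use the conditionally Gaussian version of the filter. If at $\tau_k$ the conditional law of $Z_{\tau_k}$ given $\Gfilt_{\tau_k}$ is Gaussian, then the Kalman--Bucy theorem for conditionally Gaussian systems (\cite{LiptserShiryaev}, Ch.~10--12; \cite{BainCrisan}, Ch.~6) gives exact Gaussianity on the whole interval. The mean and covariance then follow the standard filter and Riccati equations for $Z$. Projecting onto the $X$-block gives \eqref{eq:meanfilter}, with $\hat Q_t$ the $Q$-block of the joint mean. It also gives the $X$-block of the covariance equation, which is \eqref{eq:riccati} without the $\lambda\Sigma_J$ term, as the statement itself specifies for the observed-epoch regime.

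At each epoch the update is a Bayes step. We have $X_{\tau_k}=X_{\tau_k^-}+\zeta_k$, where the mark $\zeta_k\sim\mathcal N(0,\Sigma_J)$ is independent of $\Gfilt_{\tau_k^-}\vee\sigma(\tau_k)$ and of $Z_{\tau_k^-}$. The observation \eqref{eq:obs} has continuous paths, so no further information about $\zeta_k$ arrives at the instant $\tau_k$, and observing the epoch itself carries no information about the mark. Hence the conditional law of $X_{\tau_k}$ given $\Gfilt_{\tau_k}$ is the convolution $\mathcal N(\hat X_{\tau_k^-},\Pi_{\tau_k^-})*\mathcal N(0,\Sigma_J)$, which is the reset in \eqref{eq_unob_mark}. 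The cross-covariance with $Q$ is unchanged. Induction over $k$, starting from a Gaussian $X_0$ (implicit in $\Pi_0=\operatorname{Cov}(X_0)$, and assumed jointly Gaussian with $Q_0$), gives the claim on $[0,T]$. The induction terminates because only finitely many epochs occur in $[0,T]$ almost surely.

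The main obstacle is making the conditioning rigorous. We must show that conditioning on $\Gfilt_t$ agrees with conditioning on the observation path together with the realized epochs. Concretely, the public filtration has to be understood as $\sigma(\xi_s,\,N_s:\ s\le t)$, where $N$ is the epoch counting process, since Assumption~\ref{ass:observed_jumps} makes $N$ observed. We must also show that the independence of $N$ from the Gaussian drivers yields a regular conditional probability under which the system is linear-Gaussian with deterministic switching times. The plan is to work on the product space $\Omega_N\times\Omega_{\mathrm{Gauss}}$, fix $\omega_N$, and apply the filter interval by interval. We then check measurability of $(\hat X,\Pi)$ in $\omega_N$ and use the strong Markov property at the stopping times $\tau_k$ to concatenate the intervals. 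A second point needs care when $\rho$ is feedback rather than open-loop. Then the gain is $\Gfilt$-adapted, the system is only conditionally Gaussian, and we need the Liptser--Shiryaev conditions (boundedness of $\rho$ and of the coefficients) to invoke their Theorem 12.7. We will impose these as part of admissibility of $\Ucal_L$.
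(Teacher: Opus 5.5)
Your overall route is the paper's: condition on the observed epoch path, run Kalman--Bucy exactly between epochs, apply the Gaussian convolution reset with unchanged mean at each $\tau_k$, and glue by induction over the finitely many epochs. Your extra care is sound and goes beyond what the paper writes. You enlarge $\Gfilt_t$ to $\sigma(\xi_s,N_s:s\le t)$, since a continuous observation of $\rho X$ does not by itself reveal the epochs. You also set up the product-space conditioning and invoke the conditionally Gaussian filter for feedback gains. The paper simply posits the stopping-time property and takes $\rho$ deterministic (Definition~\ref{def:admissible}).

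One step, however, fails as written: projecting the joint covariance equation onto the $X$-block. Let $P$ be the conditional covariance of $Z=(X,Q)$, with drift matrix $\bigl(\begin{smallmatrix}A_t&B\\0&-\Theta\end{smallmatrix}\bigr)$ and observation matrix $[\rho_t\ \ 0]$. The joint Riccati equation then gives
\begin{align*}
\dot P_{XX}&=A_tP_{XX}+P_{XX}A_t^\top+BP_{QX}+P_{XQ}B^\top+\Sigma_t\Sigma_t^\top-P_{XX}\rho_t^\top R^{-1}\rho_tP_{XX},\\
\dot P_{XQ}&=A_tP_{XQ}-P_{XQ}\Theta^\top+BP_{QQ}-P_{XX}\rho_t^\top R^{-1}\rho_tP_{XQ}.
\end{align*}
The mean projects correctly: the $X$-row of the gain is $P_{XX}\rho_t^\top R^{-1}$, so \eqref{eq:meanfilter} holds with $\Pi=P_{XX}$. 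The covariance block, however, is not closed. The terms $BP_{QX}+P_{XQ}B^\top$ are absent from \eqref{eq:riccati}, and they do not vanish, because the second line shows $P_{XQ}$ is forced by $BP_{QQ}$ even from $P_{XQ}(0)=0$. So in general $\operatorname{Cov}(X_t\mid\Gfilt_t)$ obeys \eqref{eq:riccati} (without $\lambda\Sigma_J$) only if the cross-covariance stays zero, e.g.\ $B=0$ or $Q$ perfectly known.

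The paper's proof drops the same terms: its It\^o computation on $\tilde X_t\tilde X_t^\top$ ignores the error drift $B(Q_t-\hat Q_t)$. Your argument does correctly establish four things:
\begin{itemize}
\item exact Gaussianity of the conditional law;
\item the mean equation \eqref{eq:meanfilter};
\item the reset acting on the $XX$ block only (as you note, the cross-covariance is unchanged at $\tau_k$);
\item a closed, deterministic joint Riccati flow for $P$.
\end{itemize}
To reach the lemma as stated, you must either add the hypothesis $B=0$ (or an observed factor), or carry the joint covariance $P$ as the belief coefficient in place of $\Pi$. The latter is also what Lemma~\ref{lem:reduction} and the master equation would then require.
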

\begin{proof}
Let $\tilde X_t = X_t-\hat X_t$ with $\hat X_t=\E[X_t\mid\Gfilt_t]$ the conditional mean and $\Pi_t=\E[\tilde X_t\tilde X_t^\top\mid\Gfilt_t]$ the conditional covariance. For the linear-Gaussian \emph{continuous} part the innovation $\diff I_t = \diff\xi_t-\rho_t\hat X_t\,\diff t$ is, by L\'evy's characterization, an $R$-scaled $\Gfilt_t$-Wiener process, and the standard Kalman--Bucy derivation gives mean dynamics \eqref{eq:meanfilter} with gain $K_t=\Pi_t\rho_t^\top R^{-1}$ minimizing $\Tr\Pi_t$. Applying It\^o to $\tilde X_t\tilde X_t^\top$ over the continuous part yields
\begin{equation}
\diff\Pi_t^{c} = \big(A_t\Pi_t+\Pi_tA_t^\top+\Sigma_t\Sigma_t^\top-\Pi_t\rho_t^\top R^{-1}\rho_t\Pi_t\big)\diff t .
\end{equation}
For the jump part, condition on the observed epoch path (Assumption~\ref{ass:observed_jumps}): between consecutive observed epochs the system is linear-Gaussian, so the Kalman--Bucy algebra above is \emph{exact} there, and at each observed epoch $\tau_k$ the conditional law updates in closed form---the mean is unchanged and the covariance resets by the mark covariance, $\Pi_{\tau_k}=\Pi_{\tau_k^-}+\Sigma_J$, which is Gaussian-preserving. Gluing the inter-epoch Kalman--Bucy segments with these resets yields \eqref{eq:meanfilter}--\eqref{eq:riccati} exactly, with the resets in place of a continuous jump rate; the conditional law is $\mathcal N(\hat X_t,\Pi_t)$ at every $t$, proving the lemma.
\end{proof}

If instead the jump epochs are \emph{not} observed, the exact law is a Gaussian mixture and the same moment-matching algebra yields the assumed-density approximation with the continuous rate $\lambda\Sigma_J$ in \eqref{eq:riccati}; see following Remark \ref{rem:infinite}.

\begin{remark}[The unobserved-epoch case is an approximation, not used here]\label{rem:infinite}
If Assumption~\ref{ass:observed_jumps} is dropped and the jump epochs are \emph{not} observed, the conditional law $\Pprob(X_t \in \cdot \mid \Gfilt_t)$ becomes a Gaussian \emph{mixture} indexed by the unknown number and timing of jumps; the exact filter (Kushner--Stratonovich / Zakai equation) is then infinite-dimensional and the mean--covariance pair \eqref{eq:meanfilter}--\eqref{eq:riccati} is only the Gaussian assumed-density (projection) approximation. We flag this case for completeness only: it is \emph{not} the setting of this paper, and none of the results below rely on it. Under the standing Assumption~\ref{ass:observed_jumps} the filter is exact and this issue does not arise.
\end{remark}

Because Lemma~\ref{lem:proj_filter} delivers the \emph{exact} conditional law under Assumption~\ref{ass:observed_jumps}, the pair $(\hat X_t,\Pi_t)$ is a genuine sufficient statistic: every expectation, dynamic-programming step, and verification argument in Sections~\ref{sec:game}--\ref{sec:viscosity} can be carried out on $(\hat X_t,\Pi_t)$ without approximation error. This is what makes the finite-dimensional dynamic-programming analysis below rigorous rather than heuristic. In particular, the separation and sufficiency property we use is established by Lemma~\ref{lem:proj_filter}, not postulated as an approximation: we solve the optimal control of the original partially observed jump-diffusion, reduced without loss to the exact filtered state $(\hat X_t,\Pi_t)$.

\section{The Lower-Level Game}\label{sec:game}

This section builds the lower-level game in three steps. We first impose the regularity the belief covariance requires. We then prove two lemmas about the belief layer alone, neither of which mentions the game: that the Riccati flow stays in the interior of the positive semidefinite cone, and that the covariance enters as a deterministic coefficient rather than as a state, so the belief mean and the agents' local states form a sufficient statistic. On that basis we establish well-posedness of the \emph{single-agent problem}, the control problem one agent faces once the disclosure policy $\rho$ and the rivals' controls $u_{-a}$ are held fixed. This is what the equilibrium and incentive arguments below rest on, and it follows directly from the external theory of viscosity solutions. The game is constructed only then, and the section closes with its Nash feedback equilibrium.

\begin{assumption}[Riccati regularity]\label{ass:riccati}
$R=\Sigma_\xi\Sigma_\xi^\top\succ0$; $\Pi_0=\operatorname{Cov}(X_0)\succ0$; the admissible disclosure gains $\rho\in\Ucal_L$ are uniformly bounded, $\sup_{t}\|\rho_t\|\le\bar\rho$; the pair $(A_t,\Sigma_t)$ is uniformly controllable on $[0,T]$; and the number of disruption epochs per horizon is bounded, $N_T\le K$ a.s.\ (the stock of components that can fail is finite, so each horizon admits at most $K$ such events; this replaces the unbounded Poisson idealization by its exact truncation).
\end{assumption}

\begin{lemma}[Riccati positivity and boundedness]\label{lem:riccati}
Under Assumptions~\ref{ass:state_dynamics}, \ref{ass:observed_jumps}, and~\ref{ass:riccati}, for every $\rho\in\Ucal_L$ the solution $\Pi$ of \eqref{eq:riccati} (with the jump resets of Lemma~\ref{lem:proj_filter}) satisfies
\[
0\ \prec\ \underline\pi\, I\ \preceq\ \Pi_t\ \preceq\ \bar\pi\, I,\qquad t\in[0,T],
\]
for constants $0<\underline\pi\le\bar\pi<\infty$ depending only on the problem data (not on the particular $\rho$). In particular $\Pi_t\in\operatorname{int}\Sym^n_+$ and $t\mapsto\Pi_t$ is Lipschitz on $[0,T]$. 
\end{lemma}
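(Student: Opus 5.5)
Under the standing Assumption~\ref{ass:observed_jumps}, the covariance is piecewise the solution of the Riccati ODE
\[
\dot\Pi=A_t\Pi+\Pi A_t^\top+\Sigma_t\Sigma_t^\top-\Pi\rho_t^\top R^{-1}\rho_t\Pi ,
\]
that is, \eqref{eq:riccati} with the rate $\lambda\Sigma_J$ replaced by the resets. It is glued at at most $K$ epochs (Assumption~\ref{ass:riccati}) by the jumps $\Pi\mapsto\Pi+\Sigma_J$. The plan is to bound each inter-epoch segment uniformly in $\rho$ and the initial condition, and then iterate over the at most $K+1$ segments. Because the bound on the number of epochs is deterministic, the constants stay deterministic even though the epoch times are random. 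Throughout, the only use of $\rho$ is through $S_t:=\rho_t^\top R^{-1}\rho_t$, which satisfies $0\preceq S_t\preceq \bar\rho^2\|R^{-1}\|I=:\bar s I$.

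\emph{Upper bound.} Since $-\Pi S\Pi\preceq0$, a matrix comparison argument for Riccati equations shows that on each segment $\Pi_t$ is dominated by the solution of the Lyapunov equation $\dot P=A_tP+PA_t^\top+\Sigma_t\Sigma_t^\top$ with the same initial value. One can see this either by writing the difference as a solution of a linear equation with a negative semidefinite forcing term, or by using the variation-of-constants formula with the transition matrix $\Phi(t,s)$ of $A$. This yields
\[
\|\Pi_t\|\le C_\Phi^2\Big(\|\Pi_{s}\|+\int_s^t\|\Sigma_r\|^2\diff r\Big)
\]
with $C_\Phi=\sup\|\Phi\|$. Iterating over at most $K$ resets, each adding $\|\Sigma_J\|$, gives a bound $\bar\pi$ depending only on $\Pi_0$, $A$, $\Sigma$, $\Sigma_J$, $K$ and $T$.

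\emph{Lower bound.} Work with the information matrix $\Lambda_t=\Pi_t^{-1}$, which exists as long as $\Pi_t\succ0$. It satisfies the linear-in-$S$ equation
\[
\dot\Lambda=-A^\top\Lambda-\Lambda A-\Lambda\Sigma\Sigma^\top\Lambda+S .
\]
Dropping the term $-\Lambda\Sigma\Sigma^\top\Lambda\preceq0$ and comparing again, $\Lambda_t$ is dominated by the solution of the linear equation $\dot L=-A^\top L-LA+\bar sI$. By variation of constants this is bounded by $C_\Phi^2(\|\Lambda_s\|+\bar s T)$. A reset $\Pi\mapsto\Pi+\Sigma_J\succeq\Pi$ only decreases $\Lambda$, so the resets are harmless here. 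Starting from $\Lambda_0=\Pi_0^{-1}$, which is finite by assumption, we get $\|\Lambda_t\|\le\bar\lambda$ uniformly, hence $\Pi_t\succeq\bar\lambda^{-1}I=:\underline\pi I$. The same bound a priori rules out $\Pi$ reaching the boundary of the cone, which justifies inverting $\Pi$ on the whole interval. Controllability of $(A_t,\Sigma_t)$ is not actually needed given $\Pi_0\succ0$; it would only be used to obtain positivity when $\Pi_0$ is singular, via the controllability Gramian. I would mention it as the alternative route.

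\emph{Lipschitz continuity and the main obstacle.} On each segment, $\|\dot\Pi_t\|\le 2\|A\|\bar\pi+\|\Sigma\|^2+\bar s\bar\pi^2$, which is a uniform constant, so $\Pi$ is Lipschitz between epochs. Across epochs $\Pi$ jumps by $\Sigma_J$, so the claim ``Lipschitz on $[0,T]$'' holds only piecewise, with at most $K$ jumps of size $\Sigma_J$. I expect this to be the point needing care: I would state it as piecewise Lipschitz with uniform constant and bounded jumps, or say Lipschitz on $[0,T]$ for the continuous part between epochs, rather than claim global Lipschitz continuity. The other delicate step is the rigorous matrix comparison for Riccati equations with time-varying coefficients. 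I would handle it by showing that $D=P-\Pi$ solves a linear equation $\dot D=A D+DA^\top+\Pi S\Pi$ with $D_s=0$, so that $D_t=\int_s^t\Phi(t,r)\,\Pi_r S_r\Pi_r\,\Phi(t,r)^\top\diff r\succeq0$. The analogous identity for $\Lambda$ works the same way.
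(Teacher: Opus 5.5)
Your proof is correct. For the upper bound it is the paper's argument: drop $-\Pi S_t\Pi$, dominate by the Lyapunov flow via variation of constants, and add $\Sigma_J$ at each of the at most $K$ resets. One cosmetic difference: the paper writes a single matrix inequality over all of $[0,T]$. That keeps $\Phi(t,\tau_k)\Phi(\tau_k,s)=\Phi(t,s)$ and avoids the factor $C_\Phi^2$ per segment that your norm-wise iteration accumulates. Both constants are deterministic and $\rho$-free, so nothing is lost.

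The lower bound takes a genuinely different route. The paper compares $\Pi$ with the Riccati equation whose gain is replaced by its envelope $\bar s I$. It then appeals to a controllability-Gramian bound coming from uniform controllability of $(A_t,\Sigma_t)$.

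You instead pass to the information form $\Pi^{-1}$. There the gain enters additively as $+S_t$, and the quadratic term $-\Pi^{-1}\Sigma_t\Sigma_t^\top\Pi^{-1}$ has the favourable sign. So a purely linear comparison with $\dot L=-A_t^\top L-LA_t+\bar sI$ suffices. The resets are absorbed by operator monotonicity of inversion, and your continuation argument justifies inverting $\Pi$ throughout.

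Your route is more elementary and self-contained: it needs no Riccati comparison theorem and no Gramian estimate. It also shows that, on a finite horizon with $\Pi_0\succ0$, the controllability hypothesis is superfluous. The paper's remark after the lemma, that controllability is \emph{essential} for $\underline\pi>0$, is therefore overstated.

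What the controllability route can buy in principle is a lower bound that, away from $t=0$, does not depend on $\|\Pi_0^{-1}\|$. It also need not degrade like $e^{2\|A\|_\infty T}$. That would matter for singular priors or long horizons, but the lemma as stated needs neither.

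Your caution about the Lipschitz claim is also right. If an epoch occurs in $[0,T]$ and $\Sigma_J\neq0$, then $\Pi$ jumps by $\Sigma_J$ and cannot be Lipschitz on $[0,T]$. The paper's justification is that the right-hand side is bounded on the tube $\underline\pi I\preceq\Pi\preceq\bar\pi I$. That yields exactly your piecewise statement: a uniform Lipschitz constant between epochs, with at most $K$ jumps of size $\Sigma_J$. This is all Section~\ref{sec:viscosity} actually uses, since it works interval by interval.
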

\begin{proof}
\emph{Upper bound.} Dropping the negative-semidefinite gain term $-\Pi\rho^\top R^{-1}\rho\Pi\preceq0$ in \eqref{eq:riccati} gives, between resets, $\dot\Pi\preceq A_t\Pi+\Pi A_t^\top+\Sigma_t\Sigma_t^\top$. Let $\Phi(t,s)$ be the state transition matrix of $A_t$, so $\|\Phi(t,s)\|\le \exp(\|A\|_\infty(t-s))$. Variation of constants across the (at most $K$, by Assumption~\ref{ass:riccati}) resets $\Pi_{\tau_k}\mapsto\Pi_{\tau_k^-}+\Sigma_J$ yields
\begin{align*}
\Pi_t\ \preceq\ \Phi(t,0)\Pi_0\Phi(t,0)^\top+\int_0^t\Phi(t,s)\Sigma_s\Sigma_s^\top\Phi(t,s)^\top\diff s +\sum_{k:\tau_k\le t}\Phi(t,\tau_k)\Sigma_J\Phi(t,\tau_k)^\top,
\end{align*}
where the semidefinite ordering is preserved through each reset ($\Pi\preceq M$ at $\tau_k^-$ implies $\Pi+\Sigma_J\preceq M+\Sigma_J$ at $\tau_k$). Hence the deterministic bound
\[
\bar\pi = \exp(2\|A\|_\infty T)\big(\|\Pi_0\|+T\|\Sigma\Sigma^\top\|_\infty+K\|\Sigma_J\|\big),
\]
depending only on the problem data.
\emph{Lower bound.} The filtering error covariance dominates the covariance of the smoothing error of the uncontrolled-observation problem; concretely, comparison of \eqref{eq:riccati} with the gain replaced by its upper envelope ($\|\rho_t\|\le\bar\rho$, $R\succ0$) shows $\Pi$ dominates the solution of a Riccati equation with bounded coefficients and strictly positive initial condition, whose minimal eigenvalue on $[0,T]$ is bounded below by uniform controllability of $(A_t,\Sigma_t)$ via Assumption \ref{ass:riccati} (for the controllability Gramian bound; see \cite{BainCrisan}, Ch.~6, or \cite{LiptserShiryaev}, Ch.~10). Jump resets only add $\Sigma_J\succeq0$ and cannot decrease the lower bound. Lipschitz continuity in $t$ follows since the right-hand side of \eqref{eq:riccati} is bounded on the tube $\underline\pi I\preceq\Pi\preceq\bar\pi I$.
\end{proof}

The uniform controllability of $(A_t,\Sigma_t)$ in Assumption~\ref{ass:riccati} is essential rather than technical: it is what delivers the uniform lower eigenvalue bound $\underline\pi>0$, and without it $\Pi_t$ could degenerate toward the boundary of the cone, invalidating the deterministic-coefficient reduction of Section~\ref{sec:viscosity}.

\begin{lemma}[Sufficient statistic and deterministic uncertainty flow]\label{lem:reduction}
Along any $\rho\in\Ucal_L$, and \emph{conditionally on the observed epoch path} $\Nu_t$ (Assumption~\ref{ass:observed_jumps}), the process $\Pi$ is the unique deterministic solution of \eqref{eq:riccati} between jumps, with the known resets of Lemma~\ref{lem:proj_filter} at the observed epochs; it is independent of the realized path of the innovation Brownian motion $\beta$ of \eqref{eq:meanfilter}, of the resulting belief trajectory, and of the agents' controls $u$. Thus $\Pi$ is a piecewise-deterministic, $\Gfilt_t$-adapted coefficient process: random only through the publicly observed epochs, and deterministic on each inter-jump interval. Consequently $\Pi$ enters both levels only as a known coefficient, and the leader's control influences the lower level \emph{only} through the objects $\Pi(\cdot)$ and $\Lambda_\rho(\cdot):=\rho^\top R^{-1}\rho$, the signal-precision (Fisher information) matrix induced by the disclosure gain, entering \eqref{eq:meanfilter} and the running cost, which are known functions of time on each inter-jump interval.
\end{lemma}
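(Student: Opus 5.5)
The plan is to read the Riccati equation \eqref{eq:riccati} as a matrix ODE whose coefficients do not involve the observation path, so that once the epoch path $\Nu_t$ is fixed, $\Pi$ is a function of time and of the epochs only. Fix $\rho\in\Ucal_L$ and condition on a realization of the epoch path, i.e.\ on the at most $K$ times $0<\tau_1<\dots<\tau_{N_T}\le T$ (Assumption~\ref{ass:riccati}). On each inter-epoch interval $[\tau_k,\tau_{k+1})$ the right-hand side of \eqref{eq:riccati}, with the $\lambda\Sigma_J$ term replaced by the exact resets as in Lemma~\ref{lem:proj_filter}, is
\[
F(t,\Pi)=A_t\Pi+\Pi A_t^\top+\Sigma_t\Sigma_t^\top-\Pi\Lambda_{\rho}(t)\Pi .
\]
Its coefficients $A_t$, $\Sigma_t$, $\Lambda_\rho(t)=\rho_t^\top R^{-1}\rho_t$ do not involve $\xi$, $\hat X$, $\beta$, $Y$ or $u$. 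Here I take $\rho$ to be an open-loop gain committed at time $0$, which is the commitment structure of Section~\ref{sec:icdg}, so $\Lambda_\rho$ is a deterministic, bounded, measurable function of $t$; if $\Ucal_L$ admits epoch-dependent gains, the same argument applies once the epochs are fixed. The map $F$ is locally Lipschitz in $\Pi$, uniformly in $t$, and Lemma~\ref{lem:riccati} confines every solution to the compact tube $\underline\pi I\preceq\Pi\preceq\bar\pi I$. Hence Carathéodory existence and uniqueness give a unique absolutely continuous solution on each interval, with no blow-up. Gluing these solutions with the deterministic resets $\Pi_{\tau_k}=\Pi_{\tau_k^-}+\Sigma_J$, by induction on $k$ starting from the deterministic $\Pi_0=\operatorname{Cov}(X_0)$, produces a unique càdlàg path $\Pi_t=\Psi_\rho(t;\tau_1,\dots,\tau_{N_t})$ with $\Psi_\rho$ a deterministic measurable map.

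Independence and adaptedness then follow. Because $\Pi_t$ is a measurable function of $\Nu_t$ alone, and the epochs come from the compound Poisson component, which is independent of $W^\xi$, $W$ and $W^Q$ (Assumption~\ref{ass:state_dynamics}), $\Pi$ is independent of the innovation process. Here I would note that $\beta$, defined as the normalized innovation $R^{-1/2}(\diff\xi_t-\rho_t\hat X_t\,\diff t)$, is a $\Gfilt_t$-Brownian motion, as in the proof of Lemma~\ref{lem:proj_filter}. Since each $\tau_k$ is a $\Gfilt_t$-stopping time (Assumption~\ref{ass:observed_jumps}), $\Pi_t$ is $\Gfilt_t$-measurable. $\Pi$ does not depend on $u$ because neither $F$ nor the resets contain $u$ or $Y$: the agents' controls enter \eqref{eq:gen_Y} only, and they feed back neither into $X$ nor into the observation \eqref{eq:obs}. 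It does not depend on the belief trajectory because $\hat X$ appears nowhere in \eqref{eq:riccati}.

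It remains to show that $\rho$ affects the lower level only through $\Pi$ and $\Lambda_\rho$. Rewrite \eqref{eq:meanfilter} in innovation form, $\diff\hat X_t=(A_t\hat X_t+B\hat Q_t)\,\diff t+\Pi_t\rho_t^\top R^{-1/2}\,\diff\beta_t$. The law of the diffusion term is governed by its quadratic variation $\Pi_t\Lambda_\rho(t)\Pi_t\,\diff t$, and by Lemma~\ref{lem:proj_filter} the conditional law of $X_t$ is $\mathcal N(\hat X_t,\Pi_t)$. Expected running costs evaluated at $\mu^\rho_t$ are therefore functions of $(\hat X_t,Y_t,u_t)$ with $\Pi_t$ entering as a known coefficient. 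The main obstacle is exactly this claim, because \eqref{eq:meanfilter} as written involves $\rho_t$ itself and not only $\Lambda_\rho$. I would resolve it by showing that only the weak (law) formulation matters for the agents' expected costs: any square root $\rho_t^\top R^{-1/2}$ of $\Lambda_\rho$ yields a diffusion coefficient with the same generator, so the Markov dynamics of $(\hat X,Y)$, and hence all expected costs, depend on $\rho$ only through $(\Pi,\Lambda_\rho)$. Combined with the piecewise-deterministic structure above, this gives the sufficient-statistic conclusion.
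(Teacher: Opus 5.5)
Your proposal is correct and follows the paper's own argument: \eqref{eq:riccati}, with the exact resets, is an ODE in $(t,\Pi)$ whose coefficients involve neither $\beta$, $\hat X$, nor $u$, so $\Pi$ is deterministic once $\rho$ and the epochs are fixed. Beyond that, you supply what the paper's three-line proof leaves implicit: global existence via the tube of Lemma~\ref{lem:riccati}, gluing at the resets, adaptedness through the stopping times, the explicit requirement that $\rho$ be open-loop or epoch-dependent rather than belief-dependent, and the weak-formulation reduction from $\rho$ to $\Lambda_\rho$.

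One soft spot: your unconditional independence claim is not justified by the independence of $J$ from $W,W^Q,W^\xi$ alone, since $\beta$ is built from $\xi$ and $\hat X$, hence from $X$, which jumps at the epochs. However, the lemma only asks for independence conditionally on $\Nu$, and that is immediate from your representation $\Pi_t=\Psi_\rho(t;\tau_1,\dots,\tau_{N_t})$.
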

\begin{proof}
The right-hand side of \eqref{eq:riccati} is a (locally Lipschitz) function of $(t,\Pi_t)$ once $\rho$ is fixed, with no dependence on $\beta$, on the state, or on $u$; hence $\Pi$ is the unique solution of an ODE and is deterministic. By the innovations representation, $\beta$ is a $\Gfilt_t$-Brownian motion and $\hat X$ is $\Gfilt_t$-adapted with the stated dynamics, in which $\Pi$ enters only as a known time-varying coefficient. Operators observe $\Gfilt_t$, so $\Pi$ is a known coefficient from their point of view.
\end{proof}

Lemma~\ref{lem:reduction} is the structural fact that makes the bilevel problem well posed: the leader does not face a stochastic, equilibrium-dependent covariance, but commits to a deterministic uncertainty trajectory $\Pi(\cdot)$ (equivalently $\rho(\cdot)$).

Let $\Acal$ index a finite set of $N>1$ strategic agents, and let $Y_{a,t}\in\R^{d_a}$ be agent $a$'s local state, driven by its own control and by the beliefs of Section~\ref{sec:signal}. The local states evolve as
\begin{equation}\label{eq:Ydyn}
\diff Y_{a,t} = f_a\big(t,\hat X_t, Y_t, u_t\big)\,\diff t ,
\end{equation}
with $f_a$ affine in the joint control $u=(u_1,\dots,u_N)$ and of at most linear growth in $(\hat X,Y)$, so the agents are coupled both through $u$ and through the common belief $\hat X_t$. Agent $a$ chooses $u_a$ subject to a state-dependent admissibility constraint
\begin{equation}\label{eq:caps}
u_{a,t}\ \in\ U_a\big(\hat X_t,\Nu_t\big)\subset\R^{m_a},
\end{equation}
where $\Nu_t$ denotes the observed epoch path up to $t$ (the epochs of $J$, which are $\Gfilt_t$-adapted by Assumption~\ref{ass:observed_jumps}) and each $U_a(\hat X,\Nu)$ is nonempty, convex and compact, and Lipschitz in $\hat X$. Making the admissible set depend on the state only through the $\Gfilt_t$-observable pair $(\hat X_t,\Nu_t)$ is what keeps the agents' problems adapted to their own information.

Agent $a$ minimizes its expected cost net of the transfer $\pi_{a,t}$, the $a$-th component of the transfer rule $\pi\in\Ucal_M$ among the primitives of Section~\ref{sec:icdg}:

\begin{equation}\label{eq:cost}
\Dcal_a(u_a; u_{-a}) = \E \bigg[ \int_0^T \Big( c_a\big(t,\hat X_t,Y_t,u_t\big) - \pi_{a,t} \Big) \diff t \bigg],
\end{equation}

\noindent where the running cost $c_a$ is continuous, convex in $u$, \emph{strictly} convex in agent $a$'s own control $u_a$, and of at most quadratic growth in $(\hat X,Y)$. We write
\begin{equation}\label{eq:ell}
\ell\big(t,\hat X,\Pi,Y,u,\rho\big):=\sum_{a\in\Acal}c_a\big(t,\hat X,Y,u\big)+\Tr\big(\rho\Lambda\rho^\top\big)
\end{equation}
for the \emph{social running cost}: the agents' costs summed, the transfers cancelling in aggregate, plus the disclosure cost priced by $\Lambda\succ0$. For a disclosure policy held fixed the disclosure term is constant in $u$, so it does not affect the lower-level minimization.

The following well-posedness result for the \emph{single-agent} problem, with the disclosure policy and the rivals' controls held fixed, is established directly from the external theory of viscosity solutions and is used by the equilibrium and incentive results below.

\begin{proposition}[Well-posedness of the single-agent problem for a fixed disclosure policy]\label{prop:single_agent}
Fix a disclosure policy $\rho\in\Ucal_L$ and a profile of rival controls, and condition on the observed epoch path so that $\Pi$ is the known coefficient it is (Lemma~\ref{lem:reduction}). Consider the resulting single-agent finite-horizon control problem with state $(\hat X,Y)$, compact convex control set \eqref{eq:caps}, coefficients affine in the control, running cost continuous and convex in the control and strictly convex on the block $u^{\mathrm c}$, and terminal cost zero. Then:
\begin{enumerate}
\item[(a)] its value function is continuous, of quadratic growth, and is the unique viscosity solution of the associated Hamilton--Jacobi--Bellman equation, by the comparison principle for proper degenerate-parabolic integro-differential equations;
\item[(b)] the Hamiltonian admits a measurable pointwise minimizer in the control, so an optimal Markov feedback exists;
\item[(c)] the value is locally semiconcave in $Y$, so its $Y$-superdifferential is single-valued Lebesgue-almost everywhere and the singular set is countably rectifiable of dimension $n-1$, hence Lebesgue-null; the optimal feedback is therefore a.e.\ single-valued, and any two optimal selectors differ only on a Lebesgue-null set and induce the same cost;
\item[(d)] the closed loop is well posed in the Filippov sense, since the feedback is measurable, bounded, and a.e.\ single-valued.
\end{enumerate}
\end{proposition}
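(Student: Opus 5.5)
The plan is to reduce the single-agent problem to a standard finite-horizon stochastic control problem with piecewise-deterministic coefficients, and then read off (a)--(d) from the external viscosity theory.

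\emph{Setting up the reduced problem.} Condition on the observed epoch path $\Nu$. By Lemma~\ref{lem:reduction}, $\Pi(\cdot)$ is then a known function of $t$ on each inter-epoch interval, and by Lemma~\ref{lem:riccati} it is uniformly bounded, uniformly positive and Lipschitz. The controlled state is $(\hat X,Y)$, driven by \eqref{eq:meanfilter} written in innovation form and by \eqref{eq:Ydyn}.
\begin{itemize}
\item The diffusion coefficient of $\hat X$ is $\Pi_t\rho_t^\top R^{-1/2}$. It is bounded, uniformly in $\rho\in\Ucal_L$, because $\|\rho\|\le\bar\rho$ and $\Pi\preceq\bar\pi I$.
\item The drift is affine in $u$ and of linear growth, since $f_a$ is.
\item The running cost has quadratic growth.
\end{itemize}
Two options are available for the nonlocal part. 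I could keep the epochs as the nonlocal term, with Poisson intensity $\lambda$ and the reset $\Pi\mapsto\Pi+\Sigma_J$, and include $\Pi$ in the state; this is the integro-differential equation of (a). Alternatively, I could work interval by interval with the pathwise-deterministic $\Pi$ and glue by backward induction over the at most $K$ epochs of Assumption~\ref{ass:riccati}. I would take the second route for existence, and use the first only to name the equation.

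\emph{Part (a).} The standard estimates give the value function. Moment bounds for SDEs with linear growth give quadratic growth of $V$. Continuity in the initial data comes from the Lipschitz dependence of flows on initial conditions, together with the compactness of $U_a$ and its Lipschitz dependence on $\hat X$ (use a Filippov-type measurable selection to transport controls between nearby states). The dynamic programming principle then gives the viscosity property, as in \cite{FlemingSoner}. Uniqueness comes from the comparison principle for proper degenerate parabolic (integro-)differential equations in the quadratic-growth class. I must check its structural conditions: a Lipschitz diffusion, and a Hamiltonian that is Lipschitz in the gradient uniformly over the compact control set.

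\emph{Part (b).} The Hamiltonian $\min_{u_a\in U_a}\{p\cdot f+c_a\}$ is the minimization of a continuous function over a compact-valued, continuous correspondence. A measurable minimizer therefore exists by the measurable maximum theorem. Strict convexity on the block $u^{\mathrm c}$ makes that block of the argmin unique, and hence continuous by Berge's theorem. The linear block may be set-valued, which is where bang-bang behaviour arises. A measurable selection of the minimizer is then a Markov feedback. Optimality is delivered by a viscosity verification argument using the DPP.

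\emph{Parts (c) and (d).} These carry the main obstacle. Semiconcavity in $Y$ needs the data to be semiconcave (for instance $C^{1,1}$) in $Y$, and it needs controls to be transportable between nearby states. The first move is the classical argument of Cannarsa--Sinestrari: compare the trajectories from $Y\pm h$ and $Y$ under a single control, and use the fact that the $Y$-dynamics have no noise and are affine in the control. If $c_a$ is only continuous, this step fails, and I would need to strengthen the hypotheses to $C^{1,1}$ in $Y$. Given local semiconcavity, Alexandrov's theorem and the rectifiability of the singular set of semiconcave functions give a.e.\ differentiability and a null singular set. At differentiability points the minimizer of the Hamiltonian depends only on $D_YV$, so the feedback is single-valued a.e. Two selectors that differ only on a null set yield the same cost, since the induced trajectories spend zero time there; this uses the nondegeneracy in $\hat X$ given by $\Pi\succ0$ and $\rho$. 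Finally, for (d), a bounded measurable feedback defines an upper semicontinuous, convex, compact-valued Filippov regularization, and existence of Filippov solutions follows; a.e.\ single-valuedness makes the regularization agree with the feedback off a null set.
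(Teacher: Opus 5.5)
Your outline essentially coincides with the paper's proof, which is itself a chain of citations: Gr\"onwall estimates, the DPP and a comparison principle for (a), with the epoch term treated as an order-zero monotone operator via Barles--Imbert; Berge plus Kuratowski--Ryll-Nardzewski for (b); Cannarsa--Sinestrari, Alexandrov and rectifiability of the singular set for (c); Filippov existence for (d).

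Your doubt about semiconcavity is justified, and the paper does not resolve it. Its proof attributes (c) to strict convexity on $u^{\mathrm c}$ and boundedness of the data, but convexity in the control has nothing to do with semiconcavity in the state. Take $f\equiv 0$ and $c=|Y|+|u^{\mathrm c}|^2$. Every hypothesis of the statement holds, yet $V=(T-t)|Y|$, which is not semiconcave at $Y=0$. Replacing $|Y|$ by $|Y|^{3/2}$ shows that even $C^1$ dependence on $Y$ is not enough. So (c) is false as stated. Your strengthening is necessary, not cosmetic: the running cost must be semiconcave (e.g.\ $C^{1,1}$) in $Y$ uniformly in $u$, and $f_a$ must have a Lipschitz $Y$-derivative; the quadratic power-system cost satisfies both. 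Since $U_a$ depends only on $(\hat X,\Nu)$, the same control is admissible from $Y\pm h$, so no transport of controls is needed for this step.

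The genuine gap is in what you derive from semiconcavity.

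\emph{Single-valuedness.} The inference ``at differentiability points the minimizer depends only on $D_YV$, so the feedback is single-valued a.e.'' does not follow. On the linear block the argmin is a whole face of $U_a$ wherever a component of the switching coefficient $B^{\mathrm l\top}\nabla_YV$ vanishes. Differentiability of $V$ says nothing about the size of that zero set, and even with smooth data it can be everything. Take $\dot Y_1=-P-u$ and $\dot Y_2=u$, with $P\in[0,1]$ at cost $\tfrac12P^2$, $u\in[0,\bar T]$ at no cost, and state cost $\tfrac12(Y_1+Y_2)^2$. The value depends only on $Y_1+Y_2$, so $\partial_{Y_1}V\equiv\partial_{Y_2}V$, and every $u\in[0,\bar T]$ is optimal at every state.

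\emph{Equal costs.} Your occupation-time argument also fails, for two reasons. First, $\Ucal_L$ bounds $\rho$ only from above, so $\Lambda_\rho=\Pi\rho^\top R^{-1}\rho\Pi$ can be singular or zero. Second, and more basically, $Y$ carries no noise at all, so no nondegeneracy in $\hat X$ prevents the process from spending positive time on a null set in the $Y$-directions. In the symmetric two-area case of Proposition~\ref{prop:special}(iii), a trajectory started on the switching surface $\{Y_1=Y_2\}$ stays on it for all time by symmetry. What you can aim for instead is payoff equivalence through a verification argument applied to every measurable selection of the argmin, not through null occupation time. A.e.\ uniqueness of the feedback needs an extra transversality hypothesis: the zero set of each switching coefficient must be Lebesgue-null. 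The paper's proof makes the same leap, so following it will not close the gap.

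Three smaller points.
\begin{enumerate}
\item For (d), bounded measurability gives Filippov existence only. The uniqueness that ``well posed'' suggests needs something like the one-sided Lipschitz condition of Proposition~\ref{prop:semiconcave}(c).
\item Your two treatments of the epochs are not interchangeable. Conditioning on the whole epoch path gives an anticipating problem whose HJB has no nonlocal term. The integro-differential equation in (a) is the HJB of the non-anticipating problem, with $\Pi$ as a state and future epochs random. Existence and comparison must be proved for the same problem.
\item Your check of the comparison hypotheses must include Lipschitz dependence of the drift on the state once the rival controls are frozen. If those are bang-bang feedbacks, as in the obedience part of Theorem~\ref{thm:ic}, the drift is discontinuous in $Y$, and the standard continuity estimates and comparison principle no longer apply.
\end{enumerate}
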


\begin{proof}
The coefficients are Lipschitz and the controls bounded, so the value function of this single-agent problem is continuous with quadratic growth by Gr\"onwall estimates; (a) is then the standard comparison result for this equation class~\cite[Ch.~V]{FlemingSoner}, the nonlocal epoch term being order-zero and monotone (it maps $\Pi\mapsto\Pi+\Sigma_J$ with a favourable sign at extrema), which is covered by~\cite{BarlesImbert2008}. Claim (b) is Berge's maximum theorem applied to the Hamiltonian over the Lipschitz, hence measurable, constraint correspondence \eqref{eq:caps}, with a measurable selector by Kuratowski--Ryll-Nardzewski selection theorem. Claim (c) follows from the strict convexity on the block $u^{\mathrm c}$ and boundedness of the data by the standard semiconcavity estimate for convex-cost control problems~\cite[Ch.~II]{CannarsaSinestrari}, the singular set of a semiconcave function being countably rectifiable of dimension $n-1$ and in particular Lebesgue-null. Claim (d) is then the standard Filippov existence result for differential equations with a measurable, bounded, a.e.\ single-valued right-hand side.
\end{proof}

Whereas $\Dcal_a$ of \eqref{eq:cost} is the objective \emph{functional}, a number attached to a given strategy profile, the \emph{best-response value function} is its optimized value as a function of the current state: holding the rivals' controls $u_{-a}$ and the disclosure policy $\rho$ fixed,

\begin{flalign}\label{eq:Vdef}
V_a(t,\hat X,Y) := \inf_{u_a}\ \E\Big[\int_t^T\!\Big(c_a\big(s,\hat X_s,Y_s,u_s\big)-\pi_{a,s}\Big)\diff s \;\Big|\; \hat X_t=\hat X,\ Y_t=Y\Big],
\end{flalign}

the infimum being over agent $a$'s admissible $\Gfilt_t$-adapted controls subject to \eqref{eq:caps}. By the Dynamic Programming Principle $V_a$ satisfies the HJB equation
\begin{align}\label{eq:hjb}
\frac{\partial V_a}{\partial t} + \min_{u_a\in U_a} \Big\{ &\, c_a\big(t,\hat X_t,Y_t,u_t\big) - \pi_{a,t} \nonumber \\
&+ \nabla_{\hat{X}} V_a^\top \left( A_t \hat{X}_t + B \hat Q_t \right) \nonumber \\
&+ \textstyle\sum_{b\in\Acal}\nabla_{Y_b} V_a^\top f_b\big(t,\hat X_t,Y_t,u_t\big) \nonumber \\
&+ \tfrac{1}{2} \Tr\!\big( \nabla_{\hat{X}}^2 V_a \Pi_t \rho_t^\top R^{-1} \rho_t \Pi_t \big) \Big\} = 0.
\end{align}
Equation \eqref{eq:hjb} characterizes a single agent's best response; the collection $\{$\eqref{eq:hjb}$\}_{a\in\mathcal A}$, with $u_{-a}$ set to the equilibrium policies, constitutes the coupled Nash system, to be solved jointly as a fixed point. We state an assumption for the lower-level problem.

\begin{assumption}[Lower-level data]\label{ass:data}
The joint control decomposes into agent blocks, $u=(u_a)_{a\in\Acal}$, and each $c_a$ depends on $u$ only through its own block $u_a$. Each $c_a$ is $C^1$ and strictly convex in $u_a$; each $f_a$ is affine in $u$; and each admissible correspondence $U_a$ of \eqref{eq:caps} is nonempty, convex- and compact-valued, and Lipschitz in $\hat X$.
\end{assumption}

\noindent Two consequences should be stated. The admissible set factors as $\prod_a U_a$, so a unilateral deviation by one agent leaves the others' constraint sets untouched. And because the blocks are disjoint and each $c_a$ is strictly convex in $u_a$, the social cost $\sum_a c_a$ is strictly convex in the full control $u$.

We follow up with our result on Nash feedback below.

\begin{theorem}[Nash feedback strategies]\label{thm:nash_feedback}
Under Assumption~\ref{ass:data}, for each $a$ the pointwise minimization in \eqref{eq:hjb} admits a measurable selection $u_a^\star(t,\hat X,Y)$, and the profile $u^\star=(u_1^\star,\dots,u_N^\star)$ is a Nash feedback equilibrium. Componentwise:
\begin{enumerate}
\item[(i)] components of $u_a$ in which $c_a$ is strictly convex are given by a \emph{saturated} feedback, the unconstrained stationary point projected onto $U_a$, and are continuous in $(\hat X,Y)$;
\item[(ii)] components entering the bracket in \eqref{eq:hjb} \emph{linearly}, with no own running cost, are \emph{bang-bang}: they sit at an extreme point of $U_a$ determined by the sign of the associated marginal-value coefficient, with arbitrary tie-breaking where that coefficient vanishes. Across a surface on which a component of the coefficient changes sign, the selected extreme point changes; the feedback is therefore discontinuous there whenever the two extreme points are distinct, which holds unless $U_a$ is degenerate in that direction.
\end{enumerate}
\end{theorem}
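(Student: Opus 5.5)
The plan has three steps: first produce a measurable minimizer of the bracket in \eqref{eq:hjb}, then identify its form componentwise, and only then establish the Nash property by a fixed-point and verification argument.

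\textbf{Measurable selection.} Fix $a$, a rival profile $u_{-a}$, and $\rho$. By Lemma~\ref{lem:reduction}, condition on the epoch path so that $\Pi$ is a known coefficient. By Assumption~\ref{ass:data}, $c_a$ depends on $u$ only through $u_a$, and each $f_b$ is affine in $u$. Hence the bracket in \eqref{eq:hjb}, viewed as a function of $u_a$, is
\[
c_a(t,\hat X,Y,u_a) + p_a^\top u_a + (\text{terms independent of } u_a),
\]
where $p_a=\sum_b (\partial_{u_a} f_b)^\top \nabla_{Y_b}V_a$. This function is continuous and convex on the convex compact set $U_a(\hat X,\Nu)$, and that set is Lipschitz in $\hat X$. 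Berge's maximum theorem then makes the argmin correspondence upper hemicontinuous with nonempty compact values. The Kuratowski--Ryll-Nardzewski theorem supplies a measurable selector $u_a^\star$, exactly as in Proposition~\ref{prop:single_agent}(b). Where $V_a$ is only a viscosity solution, I will replace $\nabla V_a$ by an element of its superdifferential. By Proposition~\ref{prop:single_agent}(c) this element is single-valued a.e., so the selector is defined a.e. and any two choices induce the same cost.

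\textbf{Componentwise form.} Split $u_a=(u_a^{\mathrm c},u_a^{\ell})$.
\begin{itemize}
\item For (i): on the strictly convex block, the minimizer is unique, so Berge's theorem upgrades upper hemicontinuity to continuity. The first-order condition $\nabla_{u^{\mathrm c}}c_a+p_a^{\mathrm c}\in -N_{U_a}(u^{\mathrm c})$ gives the minimizer as the unconstrained stationary point projected onto $U_a$. This is literally a Euclidean projection when $c_a$ is quadratic in that block, and a metric projection in the $c_a$-Bregman sense in general; I will state the result in that form.
\item For (ii): on the linear block, the objective is $(p_a^{\ell})^\top u_a^{\ell}$, a linear function over a compact convex set. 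It is therefore minimized at an extreme point selected by the sign of $p_a^\ell$, with the whole face as minimizer when a component vanishes, which is the tie-breaking. Across a surface where a component of $p_a^\ell$ changes sign, the minimizing face moves to the opposite extreme point. The feedback jumps there unless $U_a$ is degenerate in that direction.
\end{itemize}

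\textbf{Nash property (main obstacle).} The selection characterizes only best responses to fixed rivals; a fixed point of the coupled system is still needed. I would exploit the block structure: since $c_a$ depends only on $u_a$ and the admissible set factors as $\prod_a U_a$, the equilibrium notion reduces to each agent solving \eqref{eq:hjb} given the others. I would first try to obtain the fixed point from the transfer structure of Section~\ref{sub:imc}, under which the game is a potential game whose potential is the strictly convex social cost $\ell$. The minimizer of the potential HJB then furnishes a profile from which no agent gains by unilateral deviation. The verification step uses Proposition~\ref{prop:single_agent}(a) (uniqueness of each $V_a$) together with (d) (a Filippov-well-posed closed loop under the discontinuous bang-bang components), so that the feedback cost equals $V_a$ and no deviation lowers it. The delicate point is justifying this without smoothness. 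I expect to handle it by the a.e.\ single-valuedness in (c) and a standard viscosity verification. If the transfer is not assumed here, the fallback is Kakutani applied to the best-response correspondence on the compact convex set of measurable feedback maps with a weak topology. That route is where upper hemicontinuity is genuinely hard, and I would defer it to Theorem~\ref{thm:nash_exist}.
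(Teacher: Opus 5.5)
Your first two steps are the paper's proof. You get a pointwise minimizer from compactness and continuity, with measurability from Berge's theorem and the Kuratowski--Ryll-Nardzewski selection theorem. You then split $u_a=(u_a^{\mathrm c},u_a^{\mathrm l})$, using a first-order/projection argument on the strictly convex block and an extreme-point argument on the linear block.

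On (i) you are more accurate than the paper. The paper asserts that the constrained minimizer is the \emph{Euclidean} projection of the unconstrained stationary point, and gets continuity from nonexpansiveness of that projection. In general the minimizer of $c_a(u)+p^\top u$ over $U_a$ is the $c_a$-Bregman projection of the point $u_0$ with $\nabla c_a(u_0)=-p$. This is because the objective equals $c_a(u)-c_a(u_0)-\nabla c_a(u_0)^\top(u-u_0)$ up to a constant. It reduces to a Euclidean projection only for quadratic costs with scalar Hessian, or for separable costs on boxes (the coordinatewise clipping of Corollary~\ref{cor:power_feedback}). Your continuity argument, via Berge and uniqueness of the minimizer, does not need that structure.

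Both arguments, however, give continuity in $(\hat X,Y)$ only where $\nabla_Y V_a$ is continuous. After you replace $\nabla V_a$ by a superdifferential element, that means continuity only relative to the differentiability set of $V_a$.

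The gap is in your third step. Theorem~\ref{thm:nash_feedback} assumes only Assumption~\ref{ass:data}, and no Groves transfer is in force. For a general transfer rule the lower level is an arbitrary nonzero-sum game, and nothing makes $\sum_a c_a$ a potential. Your primary route therefore uses a hypothesis the theorem does not have. Deferring to Theorem~\ref{thm:nash_exist} does not repair this, since that theorem assumes exactly the Groves transfer.

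The Kakutani fallback is left unexecuted. Closedness of the best-response correspondence, in a topology that makes the set of measurable feedbacks compact and the costs continuous, is the entire difficulty, and you do not supply it. Separately, the social cost is not strictly convex in $u$: it is linear on the bang-bang block, as Lemma~\ref{lem:partialmin} itself notes. Partial-minimization consistency needs no convexity, so this slip is harmless.

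You are right that an \emph{existence} claim would need a fixed point; the paper simply does not make one here. It reads \eqref{eq:hjb} as the coupled system ``with $u_{-a}$ set to the equilibrium policies,'' so each $V_a$ is agent $a$'s best-response value against the rivals' selected feedbacks. The Nash property is then one line. Each $u_a^\star$ attains the minimum of agent $a$'s own bracket given $u_{-a}^\star$, and is therefore a best response by verification; in the nonsmooth case this uses Proposition~\ref{prop:single_agent}(a),(d).

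The theorem is thus a characterization of any joint solution of the coupled system. Existence of such a solution is postponed to Theorem~\ref{thm:nash_exist}, where the Groves alignment and Lemma~\ref{lem:partialmin} supply it. To prove the statement as intended, replace your fixed-point search with this conditional verification. Proving existence requires adding the transfer hypothesis, which makes it a different theorem.
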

\begin{proof}
Fix $a$ and consider the pointwise minimization over $u_a\in U_a(\hat X,\Nu)$ in \eqref{eq:hjb}. The admissible set is nonempty, convex and compact and the minimand is continuous, so a minimizer exists; measurability of a selection follows from Berge's maximum theorem together with the Kuratowski--Ryll-Nardzewski selection theorem, the correspondence $U_a$ being Lipschitz hence measurable (Assumption~\ref{ass:data}). This gives $u_a^\star$, and the profile $u^\star$ is a Nash feedback equilibrium because each component minimizes agent $a$'s own bracket given the others.

Split $u_a=(u_a^{\mathrm c},u_a^{\mathrm l})$ according to whether $c_a$ is strictly convex in that component, or the component enters the bracket linearly with no own running cost.

\emph{(i) Strictly convex components.} On these coordinates the minimand is the sum of the strictly convex $c_a$ and a term affine in $u_a^{\mathrm c}$, arising from $\sum_b\nabla_{Y_b}V_a^\top f_b$ with $f_b$ affine in $u$. The unconstrained first-order condition $\nabla_{u_a^{\mathrm c}}c_a = q$, where $q$ collects the affine coefficients, has a unique solution by strict convexity, and the constrained minimizer is its projection onto the convex compact set $U_a$. Since the projection onto a convex set is nonexpansive and $q$ depends continuously on $(\hat X,Y)$ through $\nabla V_a$, this component is a continuous, saturated feedback.

\emph{(ii) Linear components.} On these coordinates the minimand is affine, $\langle q_{\mathrm l},u_a^{\mathrm l}\rangle$ with $q_{\mathrm l}$ the associated marginal-value coefficient, so the minimum over the convex compact $U_a$ is attained at an extreme point determined by the sign pattern of $q_{\mathrm l}$: the feedback is bang-bang. Where a component of $q_{\mathrm l}$ vanishes every point of the corresponding face is optimal, so any measurable tie-breaking rule is admissible. If that component changes sign at such a point, the minimizing extreme point before and after are the two faces of $U_a$ opposite in that direction; they are distinct unless $U_a$ has empty extent there, so the selection jumps and the feedback is discontinuous across the surface.
\end{proof}

The bang-bang components of Theorem~\ref{thm:nash_feedback}(ii) render the optimal feedback discontinuous and the value functions generically non-$C^2$ across switching surfaces; the state-dependent admissible sets \eqref{eq:caps} compound this. Equations \eqref{eq:hjb} should therefore be interpreted in the viscosity sense~\cite{FlemingSoner}, and the optimality of the feedback of Theorem~\ref{thm:nash_feedback} requires a verification argument valid for non-smooth $V_a$. We treat \eqref{eq:hjb} as the formal characterization here; the full viscosity verification---existence, comparison, and a verification theorem valid for the non-smooth feedback---is carried out in Section~\ref{sec:viscosity}.

\section{Continuous Marginal-Contribution Transfers}\label{sec:transfers}
To deter strategic withholding of capability, the mechanism pays each agent its marginal contribution to the coupled system. Let $M_{a,t}$ be the aggregate cost reduction that agent $a$'s participation confers on the remaining agents under belief $\hat{X}_t$:

\begin{equation}\label{eq:Mdef}
M_{a,t} = \sum_{b \neq a} \big(V_b^{\text{NoCoupling}}(t, \hat{X}_t, Y_{b,t}) - V_b^{\text{Coupled}}(t, \hat{X}_t, Y_{b,t}) \big).
\end{equation}

The \emph{Groves} transfer to Agent $a$ is
\begin{equation}\label{eq:groves}
\pi_{a,t} = M_{a,t} - R_{a,t},
\end{equation}
where $R_{a,t}$ is any pivot term independent of $a$'s report (e.g.\ a Clarke pivot). The running net cost of $a$ is then $V_a^{\text{Coupled}} - M_{a,t} + R_{a,t} = -\big(\sum_{b}V_b^{\text{Coupled}} - \sum_{b\neq a}V_b^{\text{NoCoupling}}\big) + R_{a,t}$.

Existence of the lower-level Nash feedback equilibrium is \emph{not} assumed: it is established as a consequence of the Groves alignment, which renders the lower level a potential game whose potential is the social cost.

\begin{theorem}[Existence and uniqueness of the lower-level equilibrium]\label{thm:nash_exist}
Suppose Assumption~\ref{ass:data} holds and the agents face the Groves transfer \eqref{eq:groves}. Then, for every admissible disclosure policy $\rho\in\Ucal_L$:
\begin{enumerate}
\item[(i)] the social planner problem $\inf_{u\in\Ucal_O}\E\big[\int_0^T\ell\,\diff t\big]$ admits an optimal Markov feedback $u^\star(t,\hat X,Y)$;
\item[(ii)] the profile $u^\star$ is a Nash feedback equilibrium of the transfer-adjusted lower-level game \eqref{eq:cost};
\item[(iii)] the strictly convex components $P^\star$ are unique; the transfer components $T^\star$ are unique up to modification on the bang-bang switching set, which is Lebesgue-null by Proposition~\ref{prop:single_agent}(c).
\end{enumerate}
\end{theorem}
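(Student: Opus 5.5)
The plan is to treat the social planner problem as a single-agent control problem, transfer its well-posedness from Proposition~\ref{prop:single_agent}, and then use the Groves form \eqref{eq:groves} to show that each agent's transfer-adjusted objective differs from the social cost only by terms independent of its own control.

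\textbf{Step (i): the planner problem.} Fix $\rho\in\Ucal_L$ and condition on the observed epoch path, so that $\Pi$ and $\Lambda_\rho$ are known coefficients by Lemma~\ref{lem:reduction}. Then $\inf_u\E[\int_0^T\ell\,\diff t]$ is a finite-horizon control problem with the following features:
\begin{itemize}
\item its state is $(\hat X,Y)$;
\item its control set is the product $\prod_a U_a(\hat X,\Nu)$, which is nonempty, convex, compact, and Lipschitz in $\hat X$;
\item its dynamics are affine in $u$, and its running cost is $\sum_a c_a$ plus a disclosure term constant in $u$.
\end{itemize}
By Assumption~\ref{ass:data} this running cost is strictly convex in the block of components where some $c_a$ is strictly convex. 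The planner problem therefore falls under Proposition~\ref{prop:single_agent} with a single "agent" controlling the joint $u$. Parts (a)--(b) give a continuous, quadratic-growth value $V^{\mathrm{soc}}$, the unique viscosity solution of its HJB, together with a measurable pointwise minimizer $u^\star(t,\hat X,Y)$. Optimality of this Markov feedback is the step I expect to be hardest, because $V^{\mathrm{soc}}$ need not be $C^2$. I would get it by invoking the dynamic programming principle (from which the viscosity characterization is derived) along the Filippov closed loop of part (d). Failing that, I would defer to the nonsmooth verification of Section~\ref{sec:viscosity} and, for now, state the result only for the minimizer selection. The Hamiltonian minimization here is separable across the agent blocks, since each $c_a$ depends only on $u_a$ and the drift term is affine. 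So the joint minimizer is obtained blockwise, and Theorem~\ref{thm:nash_feedback} already describes each block: saturated on $P$, bang-bang on $T$.

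\textbf{Step (ii): Nash property.} Under \eqref{eq:groves}, agent $a$'s net running cost equals the social cost minus $\sum_{b\neq a}V_b^{\text{NoCoupling}}$ and plus $R_{a,t}$. Both of these terms are independent of $a$'s own control and report: the no-coupling values do not involve $u_a$, and the pivot is independent of $a$ by construction. Hence, for fixed $u_{-a}=u^\star_{-a}$,
\[
\Dcal_a(u_a;u^\star_{-a})=\E\!\Big[\int_0^T\ell\,\diff t\Big]+C_a(u^\star_{-a}),
\]
with $C_a$ not depending on $u_a$. This makes the lower level an exact potential game with potential equal to the social cost. Any unilateral deviation $u_a$ yields a profile $(u_a,u^\star_{-a})$ that is admissible, because the admissible set is the product $\prod_a U_a$. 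Its social cost is at least the planner optimum attained by $u^\star$, so no deviation is profitable and $u^\star$ is a Nash feedback equilibrium.

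\textbf{Step (iii): uniqueness.} I would argue pointwise in the Hamiltonian. On the strictly convex block $P$, the minimizer of a strictly convex function plus an affine one over a convex set is unique. Its coefficient $q$ is built from $\nabla V^{\mathrm{soc}}$, which is single-valued a.e.\ by the semiconcavity of Proposition~\ref{prop:single_agent}(c). So $P^\star$ is unique wherever the gradient exists, and the continuity noted in Theorem~\ref{thm:nash_feedback}(i) extends this uniqueness to all points. On the linear block $T$, the minimizer is unique wherever every component of the switching coefficient $q_{\mathrm l}$ is nonzero. Non-uniqueness can therefore occur only on the switching set $\{q_{\mathrm l}=0\}$, together with the non-differentiability set of $V^{\mathrm{soc}}$; Proposition~\ref{prop:single_agent}(c) asserts that this union is Lebesgue-null. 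Two selections differing on a null set induce the same Filippov trajectories and the same cost, which gives uniqueness of $T^\star$ up to modification there. The delicate point is that the null-set claim depends on the semiconcavity assertion of (c). I would take (c) as given rather than reprove it.
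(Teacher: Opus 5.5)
Your proposal follows the paper's proof essentially step for step. For (i), the planner problem is handed to Proposition~\ref{prop:single_agent}. For (ii), the Groves transfer turns the lower level into a potential game with potential $\E\int_0^T\ell\,\diff t$, and the product structure of the admissible class gives unilateral optimality; this is exactly the first claim of the paper's Lemma~\ref{lem:partialmin}, and your blockwise-separability remark is its second claim. For (iii), uniqueness comes from strict convexity on the $P$-block and the strict sign condition on the $T$-block off the switching set, whose nullness both you and the paper take from Proposition~\ref{prop:single_agent}(c).

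The differences are minor. Your concern about verifying optimality of the Markov feedback is something the paper simply reads off Proposition~\ref{prop:single_agent}(b). Your appeal to continuity to extend uniqueness of $P^\star$ to every point is unneeded, and also unreliable, since $\nabla_Y V^{\mathrm{soc}}$ jumps across the kink set of $V^{\mathrm{soc}}$; only pointwise uniqueness of the Hamiltonian minimizer for a given costate is claimed.
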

\begin{proof}
    See Appendix \ref{proof:theorem_nash_exist_proof}.
\end{proof}

\begin{theorem}[Efficiency and Incentive Compatibility]\label{thm:ic}
Consider the direct mechanism in which each agent reports its running cost $\hat c_a$ at time $0$, the coordinator computes the socially optimal Markov feedback for the \emph{reported} data (well-defined by Theorem~\ref{thm:nash_exist}), and transfers are given by \eqref{eq:groves}. Restrict deviations to Markov feedback strategies adapted to $\Gfilt_t$. Then, under the Groves transfer:
\begin{enumerate}
\item[(i)] \emph{(Reporting.)} Truthful reporting, $\hat c_a=c_a$, is a dominant strategy: for \emph{any} profile of rivals' reports, any misreport $C^D_{a}\neq C_{a}$ weakly raises $a$'s expected net cost.
\item[(ii)] \emph{(Obedience.)} Given truthful reports, following the efficient action profile $u^\star$ is a Nash best response for every agent: no unilateral Markov-feedback deviation from $u^\star_a$ improves $a$'s transfer-adjusted objective.
\item[(iii)] \emph{(Uniqueness up to null sets.)} The implemented equilibrium is unique up to Lebesgue-null modification: the strictly convex components are the unique minimizers of a strictly convex cost, and the bang-bang transfer components are determined uniquely off the switching set, which is Lebesgue-null by Proposition~\ref{prop:single_agent}(c). Any two equilibrium feedbacks therefore agree a.e.\ in $(t,\hat X,Y)$ and yield identical expected cost and transfers, since these are time integrals of the control along the closed loop and are unchanged by modification on a Lebesgue-null set; the equilibrium is payoff-unique.
\end{enumerate}
Consequently the efficient (welfare-maximizing) action is implemented: truthfully reporting and then executing $u^\star$ is incentive compatible, with dominance in the reporting dimension and Nash obedience in the control dimension, and the implemented equilibrium is payoff-unique.
\end{theorem}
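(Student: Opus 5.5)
The argument runs the standard Groves dominance argument in the continuous-time setting. It rests on the identity recorded after \eqref{eq:groves}: under the Groves transfer, agent $a$'s transfer-adjusted objective equals the social cost $\E\int_0^T \sum_b c_b\,\diff t$, minus terms that do not depend on $a$'s report or on $a$'s control. Those terms are the no-coupling values $\sum_{b\neq a}V_b^{\text{NoCoupling}}$, which are computed without $a$'s participation, and the pivot $R_{a,t}$, which is report-independent by construction. The first step is to make this identity precise at the level of expected objectives. Integrating the running net cost along the closed loop, I would show that for any profile of reports $(\hat c_a,\hat c_{-a})$ and any Markov feedback $u$,
\[
\Dcal^\pi_a(u) \;=\; \E\Big[\int_0^T \big(c_a + \textstyle\sum_{b\neq a}\hat c_b\big)(t,\hat X_t,Y_t,u_t)\,\diff t\Big] \;+\; K_a(\hat c_{-a}).
\]
Here $K_a$ is independent of $\hat c_a$ and of $u_a$. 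The coupled values $V_b^{\text{Coupled}}$ of the rivals are evaluated under the implemented profile with the rivals' reported costs, while $a$ bears its true cost $c_a$. This is the step where I must be careful. The expected-value form of the identity is what I need, and it holds because the transfers are integrated along the same trajectory as the costs.

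For (i), fix the rivals' reports $\hat c_{-a}$. If $a$ reports $\hat c_a$, the coordinator implements $u^\star[\hat c_a,\hat c_{-a}]$, the minimizer of the reported social cost. By Theorem~\ref{thm:nash_exist}(i) this minimizer exists for every admissible report. By the identity, $a$'s net cost under this report is the true-for-$a$ social cost $\E\int(c_a+\sum_{b\neq a}\hat c_b)$ evaluated at $u^\star[\hat c_a,\hat c_{-a}]$, plus $K_a$. Truthful reporting makes the coordinator minimize exactly this functional, so any misreport yields a weakly larger value. That gives dominance. No equilibrium reasoning about the rivals is needed, since $\hat c_{-a}$ is arbitrary.

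For (ii), I set all reports truthful and consider a unilateral Markov deviation $u_a$ with $u_{-a}^\star$ fixed. Assumption~\ref{ass:data} makes the constraint sets factor as $\prod_b U_b$, so the deviation is socially feasible. The identity then turns $a$'s problem into minimizing the social cost over $u_a$ alone. Since $u^\star$ minimizes the social cost jointly, it also minimizes it in the $a$-block, and so $u^\star_a$ is a best response. This is precisely Theorem~\ref{thm:nash_exist}(ii), which I would invoke directly, with the potential-game argument as the justification.

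For (iii), I would cite Theorem~\ref{thm:nash_exist}(iii). The strictly convex components are unique, by strict convexity of $\sum_a c_a$ in $u$. The bang-bang components are unique off the switching set, which is Lebesgue-null by Proposition~\ref{prop:single_agent}(c). The remaining step, which I expect to be the main obstacle, is to show that modifying the feedback on a null set does not change the expected costs and transfers along the closed loop. The concern is that the Filippov closed loop might spend positive time on the switching set. I would handle this in two steps. First, the belief mean $\hat X$ has a nondegenerate diffusion part on each inter-epoch interval, using Lemma~\ref{lem:riccati} for $\Pi\succ0$ and the bound on $\rho$. Second, by a Krylov-type occupation estimate, the law of $(t,\hat X_t,Y_t)$ then puts zero mass on Lebesgue-null sets. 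Where the $Y$-dynamics is degenerate, I would instead argue through the Filippov regularization of Proposition~\ref{prop:single_agent}(d). Under that regularization the set-valued right-hand side is unchanged by null modifications, so the trajectories, and hence the cost integrals, coincide. Payoff uniqueness then follows.
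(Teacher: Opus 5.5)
Your parts (i) and (ii) are the paper's proof.
\begin{itemize}
\item Your identity is the decomposition \eqref{eq:ic_decomp}: $a$'s true cost plus the rivals' reported costs (the ``mixed'' social cost), plus terms depending only on $\hat c_{-a}$.
\item Dominance follows because a truthful report makes the implemented profile the minimizer of that mixed criterion, while a misreport's profile is merely feasible for it. This holds pointwise in $\hat c_{-a}$.
\item Obedience is Theorem~\ref{thm:nash_exist}(ii), through the product structure of the constraints.
\end{itemize}
Two remarks on your first step. The justification you give (``the transfers are integrated along the same trajectory as the costs'') does not yield the identity from \eqref{eq:groves} read literally as a running payment. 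The quantity $\E\int_0^T V_b^{\text{Coupled}}(t,\hat X_t,Y_{b,t})\,\diff t$ is not $\E\int_0^T \hat c_b\,\diff t$. Also, $V_b^{\text{NoCoupling}}$ evaluated along the coupled path depends on $a$'s report and control through $Y_{b,t}$. The identity holds once the Groves payment is read as a baseline depending only on $\hat c_{-a}$, minus the rivals' realized reported cost of the profile actually played. That is exactly how \eqref{eq:ic_decomp} uses it, so state this reading as the operative definition rather than derive it. Second, you skip the integrability check the paper performs, which you need in order to split the expectation. It rests on pathwise bounded $Y$, a Burkholder--Davis--Gundy/Gr\"onwall bound on $\hat X$, and quadratic growth of the value functions.

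For (iii) you depart from the paper. The paper cites Proposition~\ref{prop:single_agent}(b)--(c) and simply asserts, as in Remark~\ref{rem:nullset}, that null modifications leave closed-loop time integrals unchanged. You are right that this needs an argument, but the Krylov half of yours fails, for two reasons.
\begin{itemize}
\item Nondegeneracy of $\Pi\rho^\top R^{-1}\rho\,\Pi$ needs a uniform lower bound on $\rho^\top R^{-1}\rho$. Assumption~\ref{ass:riccati} only bounds $\|\rho_t\|$ from above, so $\rho\equiv0$ or $m<n$ is admissible, and $\Pi\succ0$ does not help.
\item $Y$ carries no noise. The occupation measure of $(t,\hat X_t,Y_t)$ can therefore charge Lebesgue-null sets that are thin only in the $Y$-directions, and the switching set is such a set.
\end{itemize}
This actually happens. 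In the symmetric case of Proposition~\ref{prop:special}(iii), the optimal tie flow equalizes marginal values. On both sides of $\{Y_1=Y_2\}$ it drives $Y_1-Y_2$ toward zero at rate $2\bar T$, up to terms that vanish on the diagonal, so trajectories reaching the diagonal slide along it for positive time.

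Drop the Krylov step: the Filippov argument is the whole proof of (iii), and it does not work through zero occupation time. The mechanism is as follows.
\begin{itemize}
\item Filippov's set-valued map is insensitive to null modifications, and the $Y$-section of the switching set is null for each $(t,\hat X)$.
\item Two equilibrium selectors therefore generate the same inclusion, whose solution is unique by Proposition~\ref{prop:semiconcave}(c).
\item They share the continuous strictly convex block (Theorem~\ref{thm:nash_feedback}(i)), the linear block carries no running cost, and the transfers are functions of the trajectory.
\end{itemize}
Costs and transfers thus coincide even when the trajectory sits on the null set. With the Krylov step removed, your (iii) is more careful than the paper's own justification. The price is committing to the Filippov reading of the closed loop, which the paper adopts in Proposition~\ref{prop:single_agent}(d).
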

\begin{proof}
    See Appendix \ref{proof:theorem_ic_proof}.
\end{proof}

\subsection{Budget balance and the Green--Laffont tradeoff}\label{sec:gl}
The Groves family \eqref{eq:groves} is efficient and dominant-strategy incentive compatible in reports but generically \emph{not} budget balanced: $\sum_a \pi_{a,t}\neq 0$, so the coordinator runs a surplus or deficit funded through a network insurance account $R_t := \sum_a R_{a,t}$. By the Green--Laffont impossibility~\cite{GreenLaffont1979}, no mechanism can simultaneously be efficient, dominant-strategy incentive compatible, and budget balanced. If exact budget balance is required, one must weaken solution concepts: the expected-externality (AGV) mechanism~\cite{AGV1979} achieves efficiency and \emph{Bayesian} incentive compatibility with exact budget balance under a common prior over $X_0$ and the jump/diffusion parameters. A recentered (Shapley-type) transfer $\pi_{a,t}^{\mathrm{BB}} = M_{a,t}-\tfrac1N\sum_c M_{c,t}$ restores $\sum_a \pi_{a,t}^{\mathrm{BB}}=0$ but reintroduces dependence of $a$'s payoff on its own report through the $\{M_c\}_{c\neq a}$ terms, and hence yields only approximate, not exact, incentive compatibility. We adopt the Groves/AGV pairing to make this tradeoff explicit rather than to claim all three properties at once.

\section{The Stackelberg Master Problem}\label{sec:master}
We can now state the control problem compactly. With $\ell$ the social running cost \eqref{eq:ell} and $u^\star(\rho)$ the lower-level Nash response, the coordinator (Stackelberg leader) solves the bilevel problem
\begin{equation}\label{eq:problemP}\tag{P}
\inf_{\rho\in\mathcal{U}_L}\ \E\!\left[\int_0^T \ell\big(t,\hat X_t,\Pi_t,Y_t,u^\star_t(\rho),\rho_t\big)\,\diff t\right]
\end{equation}
subject to the belief-mean filter~\eqref{eq:meanfilter}, the deterministic Riccati flow~\eqref{eq:riccati}, the local-state dynamics~\eqref{eq:Ydyn} and the admissibility constraint~\eqref{eq:caps}, and the equilibrium constraint that $u^\star(\rho)$ be the Nash feedback of Theorem~\ref{thm:nash_feedback} under the Groves transfer~\eqref{eq:groves}; here $\mathcal{U}_L$ denotes the admissible disclosure gains and the expectation is under the reference measure $\Pprob$. Problem~\eqref{eq:problemP} is a Stackelberg bilevel problem; Sections~\ref{sec:bilevel}--\ref{sec:viscosity} show that incentive alignment collapses it to a single stochastic control problem whose value is the unique viscosity solution of the Hamilton--Jacobi--Bellman equation derived below.

Substituting the lower-level Nash profile, problem~\eqref{eq:problemP} is realized as
\begin{align}\label{eq:master}
\min_{\rho_t} \ \E \bigg[ \int_0^T \Big( \sum_{a \in \Acal} c_a\big(t,\hat X_t,Y_t,u^\ast_t\big) + \Tr(\rho_t \Lambda \rho_t^\top) \Big) \diff t \bigg],
\end{align}
subject to the filter dynamics \eqref{eq:meanfilter}--\eqref{eq:riccati} and the lower-level Nash profile $u^\ast$ of Theorem~\ref{thm:nash_feedback}. Here $\Lambda\succ0$ prices disclosure.

Write $w:=(\hat X,Y)$ for the stochastic part of the state, with $\Pi$ a coefficient carried alongside, and define the \emph{Stackelberg master value}

\begin{equation}\label{eq:value}
S(t,w) \;=\;  \inf_{(\rho,u)\in\Ucal_L\times\Ucal_O} \E\!\Big[\int_t^T \ell(s,w_s,u_s,\rho_s)\,\diff s \,\Big|\, w_t=w\Big],
\end{equation}

the infimum being over admissible disclosure policies and lower-level controls. The following characterizes $S$.

\begin{theorem}[The Master Hamilton--Jacobi--Bellman Equation]\label{thm:master_robust}
Let $\mathcal D=[0,T]\times\R^n\times\Sym^n_+\times\R^N$ and let $S$ be the Stackelberg master value of \eqref{eq:value}. Then $S$ satisfies the dynamic-programming principle, and if in addition $S\in C^{1,2}(\mathcal D)$ with polynomial growth, $S$ solves the partial integro-differential Hamilton--Jacobi--Bellman equation
\begin{align}\label{eq:isaacs}
\frac{\partial S}{\partial t}
&+\min_{\rho\in\Ucal_L}\Big\{ \textstyle\sum_{a\in\Acal}\mathcal{C}_a^{\text{macro}}
+ \nabla_{\hat{X}} S^\top\big(A_t \hat{X} + B \hat Q_t\big) \nonumber\\
& + \tfrac{1}{2}\Tr\!\big(\nabla_{\hat{X}}^2 S\,\Pi\rho^\top R^{-1}\rho\,\Pi\big)
+ \Tr\!\big(\partial_\Pi S\;\mathcal R(\Pi,\rho)\big) \nonumber\\
& + \textstyle\sum_{a\in\Acal}\nabla_{Y_a}S^\top \dot Y_{a}\Big\}
\;+\;\lambda\,\Ical S \;=\;0,
\end{align}
on $[0,T)\times\R^n\times\Sym^n_+\times\R^N$, with terminal condition $S(T,\cdot)=0$. Here
\[
\mathcal R(\Pi,\rho)=A_t\Pi+\Pi A_t^\top+\Sigma_t\Sigma_t^\top-\Pi\rho^\top R^{-1}\rho\Pi
\]
is the inter-epoch Riccati drift of \eqref{eq:riccati} \emph{without} the predictive rate $\lambda\Sigma_J$, which appears instead in the nonlocal term
\begin{equation}\label{eq:jumpop}
\Ical S(t,\hat X,\Pi,Y):=S\big(t,\hat X,\Pi+\Sigma_J,Y\big)-S\big(t,\hat X,\Pi,Y\big),
\end{equation}
and $\mathcal{C}_a^{\text{macro}} = c_a\big(t,\hat X,Y,u^\ast\big) + \Tr(\rho\Lambda\rho^\top)/N$ collects the running cost, with $\dot Y_{a}$ given by \eqref{eq:Ydyn} at the lower-level equilibrium.
\end{theorem}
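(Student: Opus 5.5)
The plan is the classical two-part argument for controlled Markov processes: first establish a dynamic-programming principle (DPP) for $S$ on the augmented state $(t,\hat X,\Pi,Y)$, then derive \eqref{eq:isaacs} from the DPP by It\^o's formula for jump processes applied to a $C^{1,2}$ value. The key structural input is that, by Lemma~\ref{lem:proj_filter} and Lemma~\ref{lem:reduction}, the triple $(\hat X_t,\Pi_t,Y_t)$ is a controlled Markov process. Between epochs it is driven by the innovation Brownian motion $\beta$, the Riccati drift $\mathcal R(\Pi,\rho)$, and the affine dynamics \eqref{eq:Ydyn}. At the publicly observed epochs, which arrive at rate $\lambda$ (truncated at $K$ by Assumption~\ref{ass:riccati}), only $\Pi$ jumps, via $\Pi\mapsto\Pi+\Sigma_J$, while $\hat X$ and $Y$ are unchanged. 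Under the Groves transfer, Theorem~\ref{thm:nash_exist} identifies the lower-level equilibrium with the social optimizer. Hence the infimum in \eqref{eq:value} over $(\rho,u)$ is a genuine single-criterion control problem, and $u^\ast$ may be substituted into the running cost. This is why the Hamiltonian is a minimum over $\rho$ only, with $\mathcal C_a^{\mathrm{macro}}$ and $\dot Y_a$ evaluated at $u^\ast$.

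\emph{Step 1 (DPP).} I would prove
\[
S(t,w,\Pi)=\inf_{\rho,u}\E\Big[\int_t^{\theta}\ell\,\diff s+S(\theta,w_\theta,\Pi_\theta)\Big]
\]
for stopping times $\theta\in[t,T]$. The ``$\le$'' direction follows by concatenation: pick a control on $[t,\theta]$ and an $\varepsilon$-optimal continuation from the random state at $\theta$. The ``$\ge$'' direction follows by conditioning an arbitrary admissible control on $\Gfilt_\theta$ and using the Markov property. Both directions need a measurable selection of $\varepsilon$-optimal controls. I would avoid heavy selection theorems by using continuity of $S$: the coefficients are Lipschitz, $\rho$ is bounded by $\bar\rho$, $\Pi$ lies in the compact tube of Lemma~\ref{lem:riccati}, and $\ell$ has quadratic growth, so Gr\"onwall estimates give local uniform continuity. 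With continuity in hand, a countable partition of the state space into small cells with a fixed $\varepsilon$-optimal control on each cell suffices, as in \cite[Ch.~IV--V]{FlemingSoner}. The number of jumps is bounded, and the jump map is deterministic and state-independent, so it does not affect the concatenation.

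This step is the main obstacle, for two reasons. First, commitment at time $0$ (obstruction (C5)) has to be reconciled with re-optimization at $\theta$. Second, the admissible $\rho$ and $u$ must be closed under pasting. I expect to handle the first point by observing that after the collapse the leader's objective is time-consistent: it is a single integral criterion with a Markov state, so an optimal committed policy is also optimal from every intermediate state. The second point I would handle by defining $\Ucal_L$ and $\Ucal_O$ as progressively measurable, bounded, $\Gfilt_t$-adapted processes, a class that is stable under pasting at stopping times.

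\emph{Step 2 (PIDE).} Assume $S\in C^{1,2}$ with polynomial growth. For a fixed constant control $\rho$ on $[t,t+h]$, apply It\^o's formula for semimartingales with jumps to $S(s,\hat X_s,\Pi_s,Y_s)$. This produces the following terms:
\begin{itemize}
\item the time derivative $\partial_t S$;
\item the drift term $\nabla_{\hat X}S^\top(A\hat X+B\hat Q)$;
\item the diffusion term $\tfrac12\Tr(\nabla^2_{\hat X}S\,\Pi\rho^\top R^{-1}\rho\Pi)$, since the innovation gain is $K=\Pi\rho^\top R^{-1}$ and the quadratic variation is $KRK^\top$;
\item the Riccati term $\Tr(\partial_\Pi S\,\mathcal R)$;
\item the state term $\sum_a\nabla_{Y_a}S^\top\dot Y_a$;
\item the compensated jump term $\lambda\,\Ical S$;
\item a local martingale.
\end{itemize}
Polynomial growth together with the moment bounds controls the martingale, so its expectation vanishes. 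Dividing the DPP inequality by $h$ and letting $h\to0$ gives ``$\ge$'' for the bracket at each $\rho$. For the reverse inequality, I would argue by contradiction: if the minimized expression were strictly positive at some point, continuity would keep it positive on a neighbourhood, and that would violate the DPP for $\varepsilon$-optimal controls (the standard argument). The terminal condition $S(T,\cdot)=0$ is immediate from \eqref{eq:value}. The jump term sits outside the minimum because the epoch intensity and reset do not depend on the controls.
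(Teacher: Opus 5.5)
Your proposal follows the paper's proof essentially step for step. Both reduce to the controlled Markov state $(\hat X,\Pi,Y)$ via Lemmas~\ref{lem:proj_filter} and~\ref{lem:reduction}, establish a dynamic-programming principle, and apply the It\^o--L\'evy generator, whose epoch compensator yields $\lambda\,\Ical S$ outside the minimization; both then kill the martingale terms with the moment bounds and divide by $h$ with $h\downarrow0$. Where you differ is only in care: the paper's Steps~2 and~5 cite Berge/Kuratowski--Ryll-Nardzewski selection and interchange the infimum with the limit directly, whereas you prove the DPP by pasting $\varepsilon$-optimal controls over a countable partition and derive the equation through the two standard one-sided inequalities (constant controls for one direction, contradiction for the other), which is more careful but the same route.
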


\begin{proof}
    See Appendix \ref{proof:theorem_master_proof}.
\end{proof}

\begin{remark}[The nonlocal term is a difference, not an integral]\label{rem:jumpop}
Because the marks are unobserved and centred (Assumption~\ref{ass:observed_jumps} and the discussion in Section~\ref{sec:signal}), an epoch leaves the belief mean fixed and inflates the covariance by the \emph{deterministic} amount $\Sigma_J$. The jump operator \eqref{eq:jumpop} is therefore a bounded \emph{difference} agent acting only on the $\Pi$-argument, not an integral against the mark law. This is what keeps \eqref{eq:isaacs} within reach of the standard theory: $\Ical$ is nonlocal but order-zero, and at an interior maximum of $\underline S-\overline S$ one has $\Ical\underline S-\Ical\overline S\le0$, which is exactly the monotonicity a comparison principle for integro-differential equations requires~\cite{BarlesImbert2008}. Section~\ref{sec:viscosity} uses only this property.
\end{remark}

The master \eqref{eq:isaacs} is coupled to the followers through $\{P^\ast_{a,g,t},T^\ast_{ba,t}\}$, which depend on $\{\nabla_Y V_a\}$ via Theorem~\ref{thm:nash_feedback}. A fully rigorous treatment therefore solves the bilevel system $(\{V_a\}, S)$ jointly; the leader's first-order conditions in $\rho_t$ must account for the sensitivity of the lower-level equilibrium to $\rho_t$ (an implicit-function / adjoint argument). We characterize the inner problem here; the joint bilevel verification---including the exact treatment of this sensitivity via an envelope argument---is carried out in Section~\ref{sec:bilevel}.

\subsection{The two coordination levers as nested limits}\label{rem:infocontrol}
The master equation \eqref{eq:isaacs} contains, as limits obtained by disabling one instrument at a time, the two levers by which a coordinator steers strategic agents without commanding them~\cite{BasarHayakawaIshiiZhu2026}: information control and incentive design. Neither lever is new as a subject. What the master equation supplies is a single dynamic-programming object from which both are recovered, and with it a control formulation of the first (Section~\ref{sec:related_gap}).

\emph{(i) Information control.} Suppose the followers require no incentive correction, either because a single agent is coordinated or because their objectives already coincide with the social cost, so that the transfer layer is inactive. The leader's only instrument is then the disclosure gain $\rho_t$, and \eqref{eq:isaacs} reduces to
\begin{align*}
\partial_t S + \nabla_{\hat X}S^\top(A_t\hat X_t + B\hat Q_t) + \ell_0 \; + \min_{\rho_t}\Big\{ \Tr(\rho_t\Lambda\rho_t^\top) + \tfrac12\Tr\big(\nabla^2_{\hat X}S\,\Lambda_{\rho}\big) + \Tr\big(\partial_\Pi S\,\dot\Pi_\rho\big) \Big\} = 0,
\end{align*}
with $\Lambda_\rho=\Pi_t\rho_t^\top R^{-1}\rho_t\Pi_t$ and $\ell_0$ the control-independent running cost. This is a stochastic optimal-control problem in which the \emph{state is a belief}: the mean $\hat X_t$ diffuses with a control-dependent volatility, and the covariance $\Pi_t$ follows the deterministic Riccati flow \eqref{eq:riccati}, whose gain term $-\Pi_t\rho_t^\top R^{-1}\rho_t\Pi_t$ is the only channel through which $\rho_t$ acts. Disclosure therefore \emph{steers uncertainty} along a trajectory in the cone $\Sym^n_+$, at a quadratic input cost $\Tr(\rho_t\Lambda\rho_t^\top)$, and the leader trades the marginal cost of transparency $2\Lambda\rho_t$ against the marginal value of uncertainty reduction transmitted through $\partial_\Pi S$ and $\nabla^2_{\hat X}S$. The structural results of Section~\ref{sec:viscosity} specialize without change: the Hamiltonian remains proper and degenerate elliptic (Section~\ref{sec:viscosity}), the Riccati bounds of Lemma~\ref{lem:riccati} still confine $\Pi$ to the interior of the cone, and the value remains the unique viscosity solution. Because the bang-bang transfer term is what destroys smoothness, the reduced problem is in fact better behaved: with $u$ absent the Hamiltonian is smooth in $p$ and the switching set treated in Section~\ref{sec:viscosity} is empty. It must be stressed that this reduction requires the followers to need no incentive correction. Merely \emph{deleting} the transfers while the followers remain strategic and misaligned does not produce \eqref{eq:isaacs} with $u$ frozen at the social optimum; it produces the equilibrium-constrained problem \eqref{eq:ICDG} of Section~\ref{sec:icdg}, in which the lower-level Nash response is not the social optimizer and the envelope identity established in Section~\ref{sec:bilevel} fails.

\emph{(ii) Incentive design.} Freezing $\rho_t\equiv\rho$ removes the information lever and leaves a dynamic marginal-contribution mechanism inside the differential game of Section~\ref{sec:game}: the belief filter runs at a fixed precision, $\Pi_t$ is an exogenous coefficient, and the efficiency collapse established in Section~\ref{sec:bilevel} still applies, so the Groves transfer implements the efficient action profile under a fixed information structure. This limit is the continuous-time counterpart of the dynamic pivot mechanism~\cite{BergemannValimaki2010}, which pays each agent its marginal contribution under an exogenous information structure; it is recovered here, not introduced.

The joint problem is not the superposition of these limits, and this is the substantive point. Section~\ref{sec:special} shows that under a one-factor common-shock structure the two instruments are complements: the marginal value of disclosure depends on whether the agents are coupled, and conversely. A coordinator who optimizes the disclosure channel taking the mechanism as given, or the mechanism taking the information structure as given, therefore does not reach the joint optimum in this case.

\section{Bilevel Verification}\label{sec:bilevel}
This section resolves the bilevel-coupling gap: it gives conditions under which the Stackelberg leader's transparency policy and the followers' Nash equilibrium are \emph{jointly} verified by a single dynamic-programming object, and it makes precise the sense in which the leader's first-order condition in $\rho_t$ correctly accounts for the sensitivity of the lower-level equilibrium. Throughout, $\Sym^n_+$ denotes symmetric positive semidefinite matrices.

\subsection{Reduced state and admissible controls}
Let $\beta_t$ be the $\Gfilt_t$-innovation Brownian motion of the belief filter, so that the belief mean obeys
\begin{equation}\label{eq:beliefSDE}
\begin{gathered}
\diff \hat X_t = \big(A_t \hat X_t + B\hat Q_t\big)\,\diff t + G_\rho(t)\,\diff\beta_t,\\
G_\rho(t) := \Pi_t\,\rho_t^\top R^{-1/2},
\end{gathered}
\end{equation}
and $G_\rho G_\rho^\top = \Pi_t\rho_t^\top R^{-1}\rho_t\Pi_t =: \Lambda_\rho(t)$, while the error covariance $\Pi_t$ solves the matrix Riccati equation \eqref{eq:riccati}. The local states evolve without diffusion,
\begin{align}
\diff Y_{a,t} &= f_a\big(t,\hat X_t, Y_t, u_t\big)\,\diff t,\label{eq:Yred}
\end{align}
where $u=(u_a)_{a\in\Acal}$ is the stacked agent control and $w_t=(\hat X_t,Y_t)$ is the stochastic part of the state, as in Section~\ref{sec:master}.

\begin{definition}[Admissible controls]\label{def:admissible}
$\Ucal_L$ is the set of measurable transparency policies $\rho:[0,T]\to\R^{m\times n}$ for which \eqref{eq:riccati} has a unique solution $\Pi\in C([0,T];\Sym^n_+)$ and $\int_0^T\!\Tr(\rho_t\Lambda\rho_t^\top)\diff t<\infty$. $\Ucal_O$ is the set of progressively measurable feedback policies $u(t,w)$ valued in the (compact, state-dependent) constraint set of \eqref{eq:caps} for which \eqref{eq:beliefSDE}--\eqref{eq:Yred} admits a unique strong solution. 
\end{definition}

\subsection{The efficiency collapse}

Let the social running cost be
\begin{equation}\label{eq:socialcost}
\ell(t,w,\Pi,u,\rho) \;=\; \sum_{a\in\Acal} c_a\big(t,\hat X,Y,u\big) + \Tr(\rho\Lambda\rho^\top),
\end{equation}

in which the inter-agent transfers do not appear: under any budget rule they are internal redistributions that cancel in the aggregate.

\begin{assumption}[Common reference model]\label{ass:common}
The agents and the leader evaluate expectations under the same reference measure $\Pprob$: no agent holds a private model of the latent dynamics or of the signal channel beyond the private information already carried by its report.
\end{assumption}

\begin{theorem}[Efficiency collapse]\label{thm:collapse}
Under Assumptions~\ref{ass:data} and~\ref{ass:common} and the Groves transfer of Theorem~\ref{thm:ic}, for each $\rho\in\Ucal_L$ the lower-level Nash equilibrium $u^\star(\rho)$ coincides with the unique minimizer of the social cost
$\E\!\big[\int_0^T \ell\,\diff t\big]$ over $u\in\Ucal_O$. Consequently the Stackelberg value satisfies
\begin{align}\label{eq:collapse}
\inf_{\rho\in\Ucal_L}\ \E\!\Big[\int_0^T \ell\big(t,w_t,\Pi_t,u^\star(\rho)_t,\rho_t\big)\diff t\Big] \;=\; \inf_{(\rho,u)\in\Ucal_L\times\Ucal_O}\ \E\!\Big[\int_0^T \ell\,\diff t\Big],
\end{align}
the value of a single joint optimal-control problem whose dynamic-programming equation is the master HJB equation \eqref{eq:isaacs} with the pointwise optimization taken jointly over $(\rho,u)$.
\end{theorem}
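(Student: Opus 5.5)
The plan is to prove the equality of the lower-level equilibrium with the social optimizer first, for fixed $\rho$, and then deduce \eqref{eq:collapse} by rewriting an iterated infimum as a joint one. Fix $\rho\in\Ucal_L$ and condition on the observed epoch path, so that by Lemma~\ref{lem:reduction} the covariance $\Pi$ is a known piecewise-deterministic coefficient and the lower level is a finite-player game on the state $w=(\hat X,Y)$ driven by \eqref{eq:beliefSDE}--\eqref{eq:Yred}.

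The first step is the potential-game identity. Using the Groves transfer \eqref{eq:groves}, I will write agent $a$'s transfer-adjusted objective as the social cost minus terms that do not depend on $u_a$. Assumption~\ref{ass:common} is used exactly here. Because every agent evaluates $\E$ under the same $\Pprob$, the expectations of $M_{a,t}$ and $R_{a,t}$ are computed under the same law as the social cost. As a result, the identity
\[
\Dcal^\pi_a(u_a;u_{-a})=\E\Big[\int_0^T \ell\,\diff t\Big]-\Phi_a(u_{-a})
\]
holds, with $\Phi_a$ independent of $u_a$ and the disclosure term $\Tr(\rho\Lambda\rho^\top)$ constant in $u$. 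This exhibits the social cost as an exact potential.

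The second step uses this identity to show that every minimizer of the potential is a Nash equilibrium and that every Nash equilibrium minimizes the potential. The first direction is immediate. For the converse I will use the structure of Assumption~\ref{ass:data}. The controls split into disjoint blocks, each $c_a$ depends only on $u_a$ and is strictly convex in it, the dynamics are affine in $u$, and the constraint set is the product $\prod_a U_a$. These properties make the planner's pointwise Hamiltonian in \eqref{eq:isaacs} (with $\rho$ frozen) separable across blocks in its own-cost part and affine in the coupling part. Block-wise optimality of each agent's HJB bracket \eqref{eq:hjb} under the potential value function therefore coincides with joint optimality of the planner's bracket. Existence of the minimizer comes from Theorem~\ref{thm:nash_exist}(i). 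Uniqueness comes from Theorem~\ref{thm:nash_exist}(iii): the strictly convex components are unique, and the bang-bang components are unique off the Lebesgue-null switching set of Proposition~\ref{prop:single_agent}(c). Hence ``unique'' is understood up to null modification, which leaves the cost unchanged, as in Theorem~\ref{thm:ic}(iii).

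The third step is the value identity. For each $\rho$, the left-hand integrand evaluated at $u^\star(\rho)$ equals $\inf_{u\in\Ucal_O}\E[\int\ell]$. Taking $\inf_\rho$ of this and using $\inf_\rho\inf_u=\inf_{(\rho,u)}$ gives \eqref{eq:collapse}, since the admissible set $\Ucal_L\times\Ucal_O$ is a product by Definition~\ref{def:admissible}. This exhibits the collapsed value as the master value $S$ of \eqref{eq:value} at the initial state. Its dynamic-programming equation is then \eqref{eq:isaacs} with the minimum taken jointly over $(\rho,u)$, by Theorem~\ref{thm:master_robust}; I will only note that the minimum over $u$ sits inside the minimum over $\rho$ pointwise.

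The main obstacle is the converse direction in the second step, that every feedback Nash equilibrium is a global minimizer of the potential. In a dynamic game, block-wise optimality does not imply joint optimality in general. I plan to handle it with a convexity argument. Since $f$ is affine in $u$ and $\sum_a c_a$ is strictly convex on disjoint blocks, a profile that satisfies the block-wise first-order and variational inequalities of the planner's Hamiltonian satisfies the joint variational inequality, because the gradient is block-diagonal in the own-cost part. If the planner's value is not smooth, I will instead pass through the viscosity characterization of Proposition~\ref{prop:single_agent}(a) and the comparison principle, identifying each agent's best-response value with the planner's value up to the $u_a$-independent offset $\Phi_a$.
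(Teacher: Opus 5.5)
\textbf{Comparison with the paper's proof.} Your route is the paper's. Groves alignment makes the social cost an exact potential, so the potential minimizer supplied by Theorem~\ref{thm:nash_exist} is an equilibrium. The iterated infimum $\inf_\rho\inf_u$ is then rewritten as the joint one, and the dynamic-programming principle for the joint problem yields \eqref{eq:isaacs} with the minimum over $(\rho,u)$.

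You are more careful than the paper in two places:
\begin{enumerate}
\item[(a)] You take uniqueness only up to null modification, from Theorem~\ref{thm:nash_exist}(iii). The paper instead appeals to strict convexity of the social problem in $u$, which is unavailable on the linear (bang-bang) block.
\item[(b)] You isolate the converse, that every equilibrium minimizes the potential. The paper only asserts it: ``being each agent's best response, it is the unique Nash equilibrium'' shows that the minimizer is an equilibrium, not that it is the only one.
\end{enumerate}
Note also that \eqref{eq:collapse} itself needs only the easy direction. Problem \eqref{eq:IMC} takes the infimum over $\mathrm{NE}^\pi(\rho)$, and the social optimizer belongs to that set. The converse is needed only for the uniqueness clause.

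\textbf{The converse needs one repair.} As phrased, ``identifying each agent's best-response value with the planner's value up to $\Phi_a$'' presupposes the conclusion.

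The common object is instead $J^{\tilde u}(t,w)$, the social cost-to-go of the candidate equilibrium $\tilde u$ itself. Because $\tilde u$ is a feedback equilibrium in the sense of the coupled system \eqref{eq:hjb}, $\tilde u_a$ is a best response from every $(t,w)$. The state-wise potential identity then shows the following for every $a$: the restricted problem of minimizing the potential over $u_a$, with $\tilde u_{-a}$ frozen, has value $J^{\tilde u}$, and $\tilde u_a$ attains its bracket.

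This step genuinely needs the from-every-state notion. Closed-loop profiles that are best responses only from the initial condition can be sustained by off-path threats and need not be efficient.

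Since all $N$ brackets are now evaluated at the same gradient $\nabla J^{\tilde u}$, the block separability you identified shows that $\tilde u(t,w)$ attains the joint minimum. So $J^{\tilde u}$ solves the planner's equation at frozen $\rho$. Only at this point does comparison give $J^{\tilde u}=S$; the proof of Theorem~\ref{thm:comparison} applies verbatim at frozen $\rho$.

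\textbf{The non-smooth case.} Do not route the argument through Proposition~\ref{prop:single_agent}(a) for the restricted problems. With the rivals' bang-bang feedbacks frozen in the drift, their coefficients are discontinuous in the state, and the Lipschitz hypotheses behind that comparison fail. Instead, show directly that $J^{\tilde u}$ is a viscosity solution of the planner's equation, whose Hamiltonian is continuous as in Lemma~\ref{lem:struct}, and then apply the comparison principle.
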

\begin{proof}
By the Groves construction each agent's private objective differs from the social objective by terms independent of that agent's own report and control (the pivot $R_{a,t}$ and the rivals' no-coupling baselines). Hence minimizing the private objective is equivalent to minimizing the social objective \eqref{eq:socialcost}, and truthful reporting is a dominant strategy and efficient operation the induced best response (Theorem~\ref{thm:ic}). By Assumption~\ref{ass:data} the social problem is strictly convex in $u$ with compact feasible set, so its minimizer $u^\star(\rho)$ is unique; being each agent's best response, it is the unique Nash equilibrium. Substituting $u^\star(\rho)$ into the leader's objective gives the left side of \eqref{eq:collapse} equal to $\inf_\rho\inf_u \E[\int\ell]$, and combining the two infima yields the joint infimum on the right. The dynamic-programming principle for the joint control problem produces \eqref{eq:isaacs} with the joint pointwise $\min_{(\rho,u)}$ Hamiltonian.
\end{proof}

The economic content is that incentive alignment is not merely a fairness device: it is what makes the leader's problem a standard control problem rather than a genuine bilevel program. With efficient followers the coordinator effectively controls the whole network directly, subject only to the agents carrying out the socially optimal action.

\subsection{The verification theorem}

Let $\mathcal D := [0,T]\times\R^n\times\Sym^n_+\times\R^N$ and, for $S\in C^{1,2}(\mathcal D)$, define the joint Hamiltonian
\begin{align}\label{eq:jointH}
\Hcal\big(t,w,\Pi,\nabla S,\nabla^2 S;\,\rho,u\big) &= \ell(t,w,\Pi,u,\rho)
+ \nabla_{\hat X}S^\top\!\big(A_t\hat X + B\hat Q_t\big) \nonumber\\
& + \sum_{a}\nabla_{Y_a}S\, f_a(\hat X,u)
+ \tfrac12\Tr\!\big(\nabla^2_{\hat X}S\,\Lambda_\rho\big)  + \Tr\!\big(\partial_\Pi S\,\dot\Pi_\rho\big),
\end{align}
where $\dot\Pi_\rho=\mathcal R(\Pi,\rho)$ is the inter-epoch Riccati drift of Theorem~\ref{thm:master_robust}. The master HJB equation is $\partial_t S + \min_{(\rho,u)}\Hcal + \lambda\Ical S = 0$ with $S(T,\cdot)=0$, the nonlocal term $\Ical$ of \eqref{eq:jumpop} being independent of $(\rho,u)$ and therefore outside the minimization.

We first verify the bilevel structure under a classical smoothness hypothesis on $S$, isolating the efficiency-collapse and envelope arguments; Section~\ref{sec:viscosity} removes this hypothesis and treats the generic non-smooth value function in the viscosity sense.

\begin{assumption}[Classical regularity]\label{ass:regular}
There exists $S\in C^{1,2}(\mathcal D)$ with at most polynomial growth solving \eqref{eq:isaacs}, and the pointwise minimization $\min_{(\rho,u)}\Hcal$ admits a measurable minimizer $\big(\rho^\star,u^\star\big)(t,w,\Pi)$ with $\rho^\star\in\Ucal_L$, $u^\star\in\Ucal_O$, and the closed-loop system \eqref{eq:beliefSDE}--\eqref{eq:Yred} under $(\rho^\star,u^\star)$ has a unique strong solution.
\end{assumption}

\begin{theorem}[Bilevel verification]\label{thm:verify}
Under Assumptions~\ref{ass:data}--\ref{ass:regular} and Theorem~\ref{thm:collapse}:
\begin{enumerate}
\item[(i)] $S(0,w_0,\Pi_0)$ equals the optimal Stackelberg value \eqref{eq:collapse};
\item[(ii)] $\rho^\star$ is an optimal transparency policy for the coordinator;
\item[(iii)] along the optimal trajectory the agent controls $u^\star$ form a Nash equilibrium of the lower-level game.
\end{enumerate}
Moreover the leader's stationarity condition $\partial_\rho\Hcal=0$ is \emph{exact}: at the joint optimum the envelope identity $\partial_u\Hcal=0$ implies that the implicit response $\partial u^\star/\partial\rho$ contributes nothing to the leader's first-order condition.
\end{theorem}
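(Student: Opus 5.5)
The plan is a classical verification argument for the joint control problem of Theorem~\ref{thm:collapse}, using Itô's formula for jump processes applied to $S(t,w_t,\Pi_t)$. Afterwards, (iii) will be transferred back to the game through the Groves alignment, and the envelope statement will be read off the joint pointwise minimization.

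\emph{Step 1 (lower bound).} Fix an arbitrary admissible pair $(\rho,u)\in\Ucal_L\times\Ucal_O$. Condition on the observed epoch path, so that, by Lemma~\ref{lem:reduction}, $\Pi$ is piecewise deterministic with the resets $\Pi\mapsto\Pi+\Sigma_J$ at the epochs $\tau_k$. Apply Itô's formula for semimartingales with jumps to $S(t,\hat X_t,\Pi_t,Y_t)$ on $[0,T]$:
\begin{itemize}
\item the continuous part yields $\partial_tS$, the drift terms in $\hat X$ and $Y$, the Riccati-drift term $\Tr(\partial_\Pi S\,\mathcal R(\Pi,\rho))$, and the second-order term $\tfrac12\Tr(\nabla^2_{\hat X}S\,\Lambda_\rho)$ coming from \eqref{eq:beliefSDE};
\item the jumps contribute $\sum_k \big(S(\tau_k,\cdot,\Pi_{\tau_k^-}+\Sigma_J,\cdot)-S(\tau_k,\cdot,\Pi_{\tau_k^-},\cdot)\big)$, which we compensate by $\lambda\Ical S\,\diff t$, leaving a martingale;
\item the stochastic integral $\int\nabla_{\hat X}S^\top G_\rho\,\diff\beta$ is the other martingale term.
\end{itemize}
Both local martingales are true martingales. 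For this we use the polynomial growth of $S$ (Assumption~\ref{ass:regular}), the uniform bounds $\underline\pi I\preceq\Pi\preceq\bar\pi I$ of Lemma~\ref{lem:riccati} (which make $G_\rho$ bounded because $\|\rho\|\le\bar\rho$), moment bounds on $(\hat X,Y)$ from linear growth and Gr\"onwall, and $N_T\le K$. If needed, localize with stopping times and pass to the limit by dominated convergence.

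Taking expectations and using $S(T,\cdot)=0$ together with the HJB inequality $\partial_tS+\Hcal(\cdot;\rho,u)+\lambda\Ical S\ge0$ (valid since $\Hcal\ge\min\Hcal$), we get $S(0,w_0,\Pi_0)\le\E\int_0^T\ell\,\diff t$ for every admissible pair.

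\emph{Step 2 (attainment; items (i) and (ii)).} Repeat Step 1 along the closed loop under $(\rho^\star,u^\star)$, which exists and is unique by Assumption~\ref{ass:regular}. Now the HJB holds with equality, so the inequality of Step 1 becomes an equality. Hence $S(0,w_0,\Pi_0)$ equals the joint infimum. By \eqref{eq:collapse} this is the Stackelberg value, which gives (i), and $\rho^\star$ attains it, which gives (ii).

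\emph{Step 3 (item (iii)).} By Theorem~\ref{thm:collapse}, for the fixed $\rho^\star$ the lower-level Nash equilibrium is the unique minimizer of the social cost over $\Ucal_O$. The control $u^\star$ attains $\inf_u\E\int\ell$ at $\rho^\star$: otherwise some $u'$ would give a strictly smaller joint cost, contradicting Step 2. So $u^\star$ coincides with $u^\star(\rho^\star)$, the Nash equilibrium of the game with Groves transfers. Theorem~\ref{thm:ic}(ii) gives the obedience form of this statement.

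\emph{Step 4 (exactness of the leader's first-order condition).} Define the reduced Hamiltonian $h(\rho):=\min_u\Hcal(\cdot;\rho,u)=\Hcal(\cdot;\rho,\hat u(\rho))$. This is a pointwise minimization of a single function, not an equilibrium constraint. Where $\hat u$ is interior in the strictly convex components, $\partial_u\Hcal=0$ there, so $\partial_\rho h=\partial_\rho\Hcal+\partial_u\Hcal\,\partial_\rho\hat u=\partial_\rho\Hcal$. Off the interior, two refinements are needed:
\begin{itemize}
\item for components saturated at a boundary, and for bang-bang components, use Danskin's theorem (envelope for minima over a compact set). Its hypotheses hold because $U_a$ does not depend on $\rho$ and $\Hcal$ is $C^1$ in $\rho$; it gives $\partial_\rho h=\partial_\rho\Hcal(\rho,\hat u(\rho))$ whenever the minimizer is unique, which is the case off the Lebesgue-null switching set of Proposition~\ref{prop:single_agent}(c);
\item $\rho$ enters $\Hcal$ only through $\Tr(\rho\Lambda\rho^\top)$, $\Lambda_\rho$ and $\mathcal R(\Pi,\rho)$, while $u$ enters only through $\ell$ and $f_a$. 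This separability makes the cross term vanish identically.
\end{itemize}

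\emph{Main obstacle.} The most delicate step is the martingale/integrability justification in Step 1 with jumps and unbounded $w$. A secondary difficulty is the rigorous envelope claim at bang-bang points, where $\partial_u\Hcal=0$ is false literally. I would state the envelope in Danskin form, holding off the null switching set, rather than as a literal vanishing gradient.
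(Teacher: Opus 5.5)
Your proposal is correct and follows the paper's proof: It\^o/Dynkin verification along an arbitrary admissible $(\rho,u)$, with equality at the pointwise minimizer, for (i)--(ii); the efficiency collapse plus Groves alignment for (iii); and Danskin's theorem for the envelope claim. Where you differ, you are more careful than the paper: its proof writes the identity as $\E\int_0^T(\partial_tS+\Hcal-\ell)\,\diff t$ and uses $\partial_tS+\min\Hcal=0$, dropping the compensated epoch term $\lambda\Ical S$ that you carry, and it asserts $\partial_u\Hcal|_{u^\star}=0$ literally, whereas your observation that $\Hcal$ is additively separable in $\rho$ and $u$ makes the cross term vanish even at saturated and bang-bang components.
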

\begin{proof}
(i)--(ii). Fix any admissible $(\rho,u)$. Applying It\^o's formula to $S(t,w_t,\Pi_t)$ between $0$ and $T$, taking $\E$, and using \eqref{eq:beliefSDE}--\eqref{eq:Yred} and \eqref{eq:riccati},
\[
\E\big[S(T,w_T,\Pi_T)\big] - S(0,w_0,\Pi_0)
= \E\!\int_0^T\!\Big(\partial_t S + \Hcal - \ell\Big)\diff t,
\]
where the martingale term vanishes by the integrability in $\Ucal_O$. Since $S(T,\cdot)=0$ and, by \eqref{eq:isaacs}, $\partial_t S + \Hcal\ge \partial_t S + \min_{(\rho,u)}\Hcal = 0$ with equality at $(\rho^\star,u^\star)$, rearranging gives
$S(0,w_0,\Pi_0)\le \E\!\int_0^T\ell\,\diff t$ for every admissible $(\rho,u)$, and equality is attained by the minimizing policies. Taking $\inf_{(\rho,u)}$ yields (i); the attaining policies give (ii); this is the standard stochastic verification argument~\cite{YongZhou,FlemingSoner}.

(iii). By Theorem~\ref{thm:collapse} the inner minimizer $u^\star$ of the joint problem is precisely the social optimizer, which under the Groves alignment is each agent's best response; hence $u^\star$ is a Nash equilibrium of the lower-level game along the optimal path.

Envelope statement. Write the leader's reduced value $\Vcal(t,w,\Pi;\rho)= \min_u\{\cdots\}$, the inner value of \eqref{eq:jointH} at fixed $\rho$. The inner minimization is attained at $u^\star(\rho)$ with $\partial_u\Hcal|_{u^\star}=0$ (first-order condition for the convex inner problem, Assumption~\ref{ass:data}). By Danskin's envelope theorem~\cite{Danskin} the total derivative of $\Vcal$ in $\rho$ equals the partial derivative at $u^\star$, $\,\mathrm d\Vcal/\mathrm d\rho = \partial_\rho\Hcal\big|_{u^\star(\rho)}$, so the implicit response $\partial u^\star/\partial\rho$ does not enter. The leader's stationarity is therefore $\partial_\rho\Hcal=0$, balancing the marginal disclosure cost $2\Lambda\rho$ against the marginal value of uncertainty reduction transmitted through $\tfrac12\Tr(\nabla^2_{\hat X}S\,\partial_\rho\Lambda_\rho)+\Tr(\partial_\Pi S\,\partial_\rho\dot\Pi_\rho)$.
\end{proof}

\begin{remark}[Regularity and the link to the viscosity treatment]\label{rem:reg}
Assumption~\ref{ass:regular} requires a classical $C^{1,2}$ solution of \eqref{eq:isaacs}. The saturated and bang-bang feedback of Theorem~\ref{thm:nash_feedback} renders the agents' value functions---and hence $S$ through the coupling---non-smooth across switching surfaces, so \eqref{eq:isaacs} generally admits only a viscosity solution. The verification argument extends to that setting by replacing It\^o's formula with the sub/supersolution inequalities for viscosity solutions and invoking a comparison principle for the HJB operator; this is carried out in Section~\ref{sec:viscosity} and removes Assumption~\ref{ass:regular}.
\end{remark}

\section{Viscosity Solutions and Verification under Non-Smooth Feedback}\label{sec:viscosity}
This section removes the classical-regularity hypothesis (Assumption~\ref{ass:regular} of Section~\ref{sec:bilevel}). Throughout, we treat a standard stochastic control problem parameterized by an arbitrary admissible disclosure policy and control profile; \emph{all existence, comparison, and verification results in this section are proved for arbitrary admissible controls.} The saturated feedback on the strictly convex block and, especially, the bang-bang feedback on the linear block make the value functions non-$C^2$, so the master HJB equation \eqref{eq:isaacs} is interpreted in the viscosity sense. We (i) record the structure of the optimized Hamiltonian, (ii) show the value function is the unique viscosity solution via a comparison principle, (iii) prove a verification theorem valid for the non-smooth value function, and (iv) establish that the discontinuous feedback is defined almost everywhere and generates a unique closed-loop trajectory. We work on the joint control problem produced by the efficiency collapse. Two structural facts frame the analysis. First, the error covariance $\Pi$ is \emph{not} a state variable in the viscosity-theoretic sense: given the leader's policy $\rho(\cdot)$ and conditionally on the observed epoch path, $\Pi$ is the fixed, deterministic solution of the Riccati flow \eqref{eq:riccati} on each inter-jump interval (Lemma~\ref{lem:reduction}), so all test functions, sub/superdifferentials, and the doubling of variables below act only on the genuinely stochastic coordinates $(\hat X,Y)\in\R^{n+N}$, with $\Pi_t$ entering the Hamiltonian as a known, bounded, Lipschitz time-dependent coefficient. In particular no boundary condition on the cone $\Sym^n_+$ is required. The analysis is organized interval by interval, in the manner of piecewise-deterministic control: on each $[\tau_k,\tau_{k+1})$ the coefficient $\Pi$ is deterministic and the comparison and verification arguments below apply verbatim; at each observed reset the value function is glued by the dynamic-programming matching condition $S(\tau_k^-,\hat X,Y)\,{=}\,S(\tau_k,\hat X,Y)$ evaluated with $\Pi_{\tau_k}=\Pi_{\tau_k^-}+\Sigma_J$, since the observed epoch changes no cost instantaneously and the jump times are totally inaccessible. Backward induction over the at most $K$ intervals (Assumption~\ref{ass:riccati}) then delivers the results on all of $[0,T]$. Second, this reduction is legitimate because the Riccati trajectory never touches the cone boundary.

With these facts in hand, the state for the viscosity analysis is again $w=(\hat X,Y)\in\R^{n+N}$, with $\Pi_t$ a coefficient; we retain the notation $w=(\hat X,Y,\Pi)\in\mathcal O$ only where convenient, always with $\Pi$ frozen at its Riccati value.

\begin{remark}[Invariance under null-set modification]\label{rem:nullset}
The expected running cost and transfers are time integrals against the law of the closed-loop process, so they are unchanged if the feedback is modified on a set that the process occupies for zero expected time. In particular, modifying a feedback on a Lebesgue-null set of $(t,\hat X,Y)$ leaves the expected cost and transfers unchanged. Throughout, optimal and equilibrium feedbacks are understood up to such null modification, and the switching set of the bang-bang controls, Lebesgue-null by Proposition~\ref{prop:single_agent}(c), carries no payoff consequence. We claim payoff-equivalence only; pathwise identity of trajectories is neither asserted nor used.
\end{remark}

\subsection{Structure of the optimized Hamiltonian}\label{sub_opt_ham}

For $S\in C^{1,2}$ equation \eqref{eq:isaacs} reads $\partial_t S + \bar H(t,w,\nabla S,\nabla^2_{\hat X}S) + \lambda\Ical S=0$, $S(T,\cdot)=0$, with
\begin{align}\label{eq:Hbar}
\bar H(t,w,p,M) = \min_{(\rho,u)} \big\{ \ell + p_{\hat X}^\top(A_t\hat X + B\hat Q_t) + \textstyle\sum_a p_{Y_a} f_a(\hat X,u) + \Tr(p_\Pi\dot\Pi_\rho) + \tfrac12\Tr(M\Lambda_\rho) \big\},
\end{align}
where $p=(p_{\hat X},p_Y,p_\Pi)$, $\Lambda_\rho=\Pi\rho^\top R^{-1}\rho\Pi$, and $\dot\Pi_\rho$ is the Riccati right-hand side. Every term is now affine in $p_{\hat X}$ and $p_\Pi$; the only non-smoothness is in the $u$-minimization, which splits:

\begin{itemize}
\item \textbf{Strictly convex components.} Write $u_a=(u_a^{\mathrm c},u_a^{\mathrm l})$, where $u_a^{\mathrm c}$ collects the components in which $c_a$ is strictly convex. Minimizing $c_a(u_a^{\mathrm c})-p_{Y_a}^\top B_a^{\mathrm c}u_a^{\mathrm c}$ over the corresponding compact convex section of $U_a$ is a constrained Legendre transform of a strictly convex $C^1$ function, whose optimal value is $C^{1,1}$ in $p_{Y_a}$: \emph{Lipschitz gradient}, not $C^2$.
\item \textbf{Linear components.} The remaining components $u_a^{\mathrm l}$ carry no own running cost and enter the bracket affinely, through $p_Y^\top B^{\mathrm l}u^{\mathrm l}$. Minimizing an affine function over a compact convex set returns, by definition of the support function, minus that support function, $-\,\sigma_{U^{\mathrm l}}(-B^{\mathrm l\top}p_Y)$, which is \emph{convex, positively homogeneous, and globally Lipschitz} in $p_Y$, with kinks confined to the set where the minimizing vertex changes. When $U^{\mathrm l}$ is a product of intervals this is a finite max of affine functions of $p_Y$, and the kink set is the union of the hyperplanes on which the corresponding coefficient of $p_Y$ vanishes.
\end{itemize}

\begin{lemma}[Admissible structure of $\bar H$]\label{lem:struct}
Under the standing assumptions (Lipschitz, bounded $A_t,B,\hat Q_t$; $f_a$ affine in $u$ and Lipschitz in $(\hat X,Y)$; $U_a$ compact, convex, Lipschitz in $\hat X$; each $c_a$ $C^1$ and strictly convex in $u_a$), $\bar H$ in \eqref{eq:Hbar} is finite and continuous on $[0,T]\times\mathcal O\times\R^{n+N+\dim\Pi}\times\Sym^n$, and satisfies:
\begin{enumerate}
\item[(H1)] \emph{Properness / degenerate ellipticity:} $\bar H(t,w,p,M)\le \bar H(t,w,p,M')$ whenever $M\preceq M'$ (the only second-order term, $\tfrac12\Tr(M\Lambda_\rho)$, is monotone in $M$ since $\Lambda_\rho\succeq0$, and the pointwise minimum preserves monotonicity);
\item[(H2)] \emph{Global Lipschitz in the gradient:} $|\bar H(t,w,p,M)-\bar H(t,w,q,M)|\le C(1+|w|)\,|p-q|$, uniformly on $\Sym^n$-bounded sets of $M$ ($\bar H$ is affine in $p_{\hat X},p_\Pi$ and piecewise-linear/$C^{1,1}$ in $p_Y$, so no quadratic-in-$p$ term is present);
\item[(H3)] \emph{State modulus of continuity:} there is a modulus $\omega$ with
$\bar H(t,w',p,M')-\bar H(t,w,p,M)\le \omega\big(|w-w'|(1+|p|)+\eta\big)$
whenever $(M,M')$ are the matrices furnished by the Crandall--Ishii lemma at scale $\eta$;
\item[(H4)] \emph{Linear growth:} $|\bar H(t,w,0,0)|\le C(1+|w|)$.
\end{enumerate}
\end{lemma}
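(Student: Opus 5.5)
The plan is to decompose $\bar H$ in \eqref{eq:Hbar} into three pieces and verify (H1)--(H4) piece by piece. The first piece is the $u$-independent, $\rho$-independent affine part $p_{\hat X}^\top(A_t\hat X+B\hat Q_t)$. The second is the leader block $\min_{\rho}\{\Tr(\rho\Lambda\rho^\top)+\Tr(p_\Pi\dot\Pi_\rho)+\tfrac12\Tr(M\Lambda_\rho)\}$. The third is the sum over agents of the follower blocks $\min_{u_a\in U_a(\hat X,\Nu)}\{c_a+p_{Y_a}^\top f_a\}$. The minimization separates this way because Assumption~\ref{ass:data} makes the blocks disjoint and $f_a$ is affine in $u$.

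\textbf{Finiteness and continuity.} Since $\sup\|\rho\|\le\bar\rho$ (Assumption~\ref{ass:riccati}), $\Ucal_L$ acts pointwise on a compact set. Each $U_a(\hat X,\Nu)$ is compact and Lipschitz-continuous in $\hat X$. Each minimand is jointly continuous in its arguments. Berge's maximum theorem, already used in Proposition~\ref{prop:single_agent}(b) and Theorem~\ref{thm:nash_feedback}, then gives that every block is finite and continuous. Lemma~\ref{lem:riccati} and Lemma~\ref{lem:reduction} let me treat $\Pi$ as a bounded Lipschitz coefficient in the tube $\underline\pi I\preceq\Pi\preceq\bar\pi I$, so $\Lambda_\rho$ and $\dot\Pi_\rho$ are uniformly bounded.

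\textbf{(H1).} For each fixed $(\rho,u)$ the minimand is nondecreasing in $M$, because $\Tr((M'-M)\Lambda_\rho)\ge0$ when $\Lambda_\rho\succeq0$. An infimum of nondecreasing functions is nondecreasing, which gives (H1).

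\textbf{(H2) and (H4).} I use the elementary bound $|\min_z F(z,p)-\min_z F(z,q)|\le\sup_z|F(z,p)-F(z,q)|$. The minimand is affine in $p$ with coefficient vector $(A_t\hat X+B\hat Q_t,\,f(\hat X,Y,u),\,\dot\Pi_\rho)$. By linear growth of $f_a$, compactness of the control sets and the Riccati bounds, this vector is bounded by $C(1+|w|)$, which is exactly (H2). For (H4) I set $p=0$, $M=0$; the value is then $\min\ell$. Taking $\rho=0$ and any admissible $u$ gives an upper bound, and the lower bound follows from continuity over compact sets. One point needs care: $c_a$ has quadratic growth in $(\hat X,Y)$, which would only give $C(1+|w|^2)$. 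I would either restrict to the growth class used in the comparison argument or note that (H4) is really needed in the form $C(1+|w|^2)$, and flag this as a possible weakening. The structural remarks preceding the lemma ($C^{1,1}$ dependence on the convex block via the constrained Legendre transform, support-function dependence on the linear block) are consistent with the Lipschitz bound, but they are not needed for it.

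\textbf{(H3), the main obstacle.} Following the standard argument for control Hamiltonians (\cite[Ch.~V]{FlemingSoner}), fix $(\rho,u)$ and compare the minimands at $w$ and $w'$. The first-order terms differ by at most $C|w-w'|\,|p|$ by the Lipschitz property of the drifts, plus a continuity modulus of $\ell$ on bounded sets. The second-order term is handled by the Crandall--Ishii matrix inequality, written in the form $\begin{pmatrix}M&0\\0&-M'\end{pmatrix}\preceq \tfrac{3}{\eta}\begin{pmatrix}I&-I\\-I&I\end{pmatrix}$. Because the diffusion $G_\rho=\Pi\rho^\top R^{-1/2}$ depends only on $(t,\rho)$ and not on $w$, choosing the same $G_\rho$ at both points gives $\Tr(M\Lambda_\rho)-\Tr(M'\Lambda_\rho)\le0$. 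This is stronger than the $\eta$-term allows and is the key simplification.

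The remaining difficulty is that $U_a$ depends on $\hat X$, so a single $u$ is not admissible at both points. I would handle this with the Lipschitz property of $U_a$: given an optimal $u'$ at $w'$, pick $u\in U_a(\hat X)$ with $|u-u'|\le L|\hat X-\hat X'|$. The resulting error in the minimand is $C|w-w'|(1+|p|)$ plus the modulus of $c_a$, since the minimand is Lipschitz in $u$ on bounded sets with constant $C(1+|w|)(1+|p|)$. Combining these estimates and absorbing the bounded-set dependence on $|w|$ yields the modulus $\omega$ stated in (H3), taken locally uniformly on bounded sets of $w$.
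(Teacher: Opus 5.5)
Your proof follows the paper's route in more detail. You use Berge's theorem for finiteness and continuity, monotonicity of a pointwise minimum for (H1), and $|\min_z F(z,p)-\min_z F(z,q)|\le\sup_z|F(z,p)-F(z,q)|$ for (H2); this is the paper's ``a minimum of $L$-Lipschitz families is $L$-Lipschitz,'' and you are right that the $C^{1,1}$ and support-function structure is not needed. For (H3) you use Lipschitz data plus the Ishii matrices, as the paper does.

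Your caveat on (H4) is correct, and it catches a real error: the paper's proof appeals to a ``bounded running cost,'' but $c_a$ is only of quadratic growth in $(\hat X,Y)$. In the paper's own instantiation $\bar H(t,w,0,0)=\sum_a\tfrac{\Gamma_a}{2}|Y_a|^2+\mathrm{const}$, so only $|\bar H(t,w,0,0)|\le C(1+|w|^2)$ holds. For the same reason, the modulus in (H3) can only be taken locally uniformly in $w$. Both weakenings are harmless. The comparison argument does not rely on (H4), the growth penalization confines the doubling maximum to a bounded set for each fixed $\varepsilon$, and quadratic growth is what Theorem~\ref{thm:existence} asserts for the value anyway.

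Two points in your (H3) argument need tidying.

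\emph{Orientation.} Your construction uses an optimal control at $w'$, a nearby admissible control at $w$, and $M\preceq M'$ from the Crandall--Ishii inequality as you wrote it. This bounds $\bar H(t,w,p,M)-\bar H(t,w',p,M')$, which is exactly what comparison needs with the subsolution at $w$ and $F=-\partial_tS-\bar H$. It does not bound the difference literally displayed in (H3): there the second-order part is $\tfrac12\Tr((M'-M)\Lambda_\rho)\ge0$, which can be of order $1/\eta$. Say which point carries the subsolution, or swap the blocks, and the two agree.

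\emph{Dependence of $G_\rho$ on $w$.} Independence of $G_\rho=\Pi\rho^\top R^{-1/2}$ from $w$ holds only if $\Pi$ is a frozen Riccati coefficient. If $\Pi$ is a doubled coordinate, as the $p_\Pi$ slot of the lemma and the comparison proof presuppose, replace your shortcut with the standard estimate. With primes denoting evaluation at $\Pi'$,
$\tfrac12\Tr(M\Lambda_\rho)-\tfrac12\Tr(M'\Lambda'_\rho)\le\tfrac{3}{2\eta}\|G_\rho-G'_\rho\|^2\le\tfrac{C}{\eta}|\Pi-\Pi'|^2\le C|w-w'|\,|p|$
at the doubling point. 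This uses that $G_\rho$ is linear in $\Pi$ with $\|\rho\|\le\bar\rho$, and it is what the paper means by the ``standard estimate for the Ishii matrices.''

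\emph{Per-agent block (cosmetic).} The block is not $c_a+p_{Y_a}^\top f_a$, because $u_a$ also enters other agents' dynamics (e.g.\ $T_{ab}$ in $f_b$). Regroup the affine term $\sum_b p_{Y_b}^\top f_b$ by control block instead. The separation still holds, and your estimates are unaffected because they are made on the joint minimand.
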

\begin{proof}
Continuity and finiteness follow from compactness of the control sets and the maximum theorem. (H1) is immediate from the displayed second-order term and monotonicity of the pointwise minimum. (H2): the drift terms $p_{\hat X}^\top(A_t\hat X+B\hat Q_t)$ and $\Tr(p_\Pi\dot\Pi_\rho)$ are affine in $p$ with $C(1+|w|)$-Lipschitz coefficients; the box-constrained convex term is $C^{1,1}$ in $p_Y$ with gradient bounded by the radius of $U_a$; the linear block contributes a term Lipschitz in $p_Y$ with constant $\sup_{u\in U}\|u\|$; a minimum of $L$-Lipschitz families is $L$-Lipschitz. There is no quadratic-in-$p$ term, so the bound is \emph{global}, not merely local. (H3) follows from Lipschitz dependence of $\ell$, $f_a$, $A_t\hat X$, $\Lambda_\rho$, $\dot\Pi_\rho$ on $w$ and the standard estimate for the Ishii matrices. (H4) is the bound at $p=0,M=0$ from the bounded running cost on compact control sets.
\end{proof}

Lemma~\ref{lem:struct} places the equation in a standard class: the bang-bang non-smoothness lands entirely in the $p$-dependence as a \emph{Lipschitz} kink, never in the ellipticity, so the equation is a proper, globally Lipschitz, degenerate-parabolic HJB equation of exactly the class covered by the Crandall--Ishii--Lions theory~\cite{CrandallIshiiLions}. The nonlocal term $\lambda\Ical$ is order-zero, bounded, and monotone: $\Ical$ evaluates its argument at the shifted covariance $\Pi+\Sigma_J$, so at a maximum of a difference of functions the nonlocal terms contribute with a favourable sign (Section~\ref{sec:master}); it is handled by the integro-differential extension of that theory~\cite{BarlesImbert2008} and is carried through the arguments below without further comment.

\subsection{Viscosity solution, existence, and comparison}

\begin{definition}[Viscosity solution; sign convention]\label{def:visc}
Write the terminal-value problem \eqref{eq:isaacs} in the Crandall--Ishii--Lions form. Define
\begin{equation*}
F\big(t,w,\partial_tS,\nabla S,\nabla^2_{\hat X}S\big)\;:=-\,\partial_t S\;-\;\bar H\big(t,w,\nabla S,\nabla^2_{\hat X}S\big),
\end{equation*}
and impose
\begin{equation*}
F=0\ \ \text{on }[0,T)\times\R^{n+N},\qquad S(T,\cdot)=0 .
\end{equation*}
The equation holds on the open time interval; the terminal condition is imposed at $t=T$, which $[0,T)$ excludes.
By (H1) of Lemma~\ref{lem:struct}, $\bar H$ is nondecreasing in $M$, so $F$ is nonincreasing in $M$: $F$ is \emph{proper and degenerate elliptic} in the sense of~\cite{CrandallIshiiLions}, and the standard conventions apply verbatim. An upper semicontinuous $S$ of polynomial growth is a \emph{viscosity subsolution} if for every $\varphi\in C^{1,2}$ and every local maximum $(t_0,w_0)$ of $S-\varphi$ with $t_0<T$,
\begin{gather*}
F\big(t_0,w_0,\partial_t\varphi,\nabla\varphi,\nabla^2_{\hat X}\varphi\big)\le0,\qquad\text{equivalently}\\
\partial_t\varphi(t_0,w_0)+\bar H\big(t_0,w_0,\nabla\varphi,\nabla^2_{\hat X}\varphi\big)\ge0,
\end{gather*}
and $S(T,\cdot)\le0$. A lower semicontinuous $S$ of polynomial growth is a \emph{viscosity supersolution} if at every local minimum $(t_0,w_0)$ of $S-\varphi$ with $t_0<T$,
\begin{gather*}
F\big(t_0,w_0,\partial_t\varphi,\nabla\varphi,\nabla^2_{\hat X}\varphi\big)\ge0,\qquad\text{equivalently}\\
\partial_t\varphi(t_0,w_0)+\bar H\big(t_0,w_0,\nabla\varphi,\nabla^2_{\hat X}\varphi\big)\le0,
\end{gather*}
and $S(T,\cdot)\ge0$. A \emph{viscosity solution} is both. The signs are those of the backward (terminal-value) parabolic form: at a maximum of $S-\varphi$ the dynamic-programming inequality yields precisely the subsolution inequality for the value function (Fleming--Soner~\cite{FlemingSoner}, Ch.~V).
\end{definition}

\begin{theorem}[Master HJB equation in the viscosity sense]\label{thm:existence}
Under the standing assumptions the value function \eqref{eq:value} is continuous, of at most quadratic growth, and is a viscosity solution of \eqref{eq:isaacs} with terminal data $S(T,\cdot)=0$.
\end{theorem}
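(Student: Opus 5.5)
The plan follows the classical route of Fleming--Soner (Ch.~V): first establish regularity of the value function $S$ of \eqref{eq:value}, then the dynamic-programming principle (already recorded in Theorem~\ref{thm:master_robust}), then derive the sub- and supersolution inequalities by testing against smooth functions. All of this is done interval by interval between observed epochs, with $\Pi$ frozen as the deterministic Riccati coefficient of Lemma~\ref{lem:reduction}. On each $[\tau_k,\tau_{k+1})$ the problem is a standard finite-horizon control problem in the state $w=(\hat X,Y)$. The controls $(\rho,u)$ range over sets that are bounded ($\|\rho\|\le\bar\rho$ by Assumption~\ref{ass:riccati}; $U_a$ compact by Assumption~\ref{ass:data}). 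The coefficients are Lipschitz in $w$ with linear growth, and the diffusion $G_\rho=\Pi\rho^\top R^{-1/2}$ is bounded uniformly because $\Pi\preceq\bar\pi I$ by Lemma~\ref{lem:riccati}. The running cost $\ell$ has quadratic growth.

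\textbf{Step 1 (growth and continuity).} Standard moment estimates for \eqref{eq:beliefSDE}--\eqref{eq:Yred} give $\E\sup_s|w_s|^2\le C(1+|w|^2)$ uniformly over admissible controls, so $|S(t,w)|\le C(1+|w|^2)$. Continuity in $w$ comes from Gr\"onwall applied to two solutions driven by the same control: $\E\sup|w_s-w'_s|^2\le C|w-w'|^2$. Together with local Lipschitz continuity and quadratic growth of $\ell$ and Cauchy--Schwarz, this yields a local modulus, uniformly in the control. Continuity in $t$ follows from the DPP plus the $\tfrac12$-H\"older time estimate $\E|w_{s}-w|^2\le C(1+|w|^2)(s-t)$. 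At the epochs, continuity across the gluing condition holds because the epoch changes no cost instantaneously and the jump times are totally inaccessible. The $K$-fold backward induction allowed by Assumption~\ref{ass:riccati} transfers these bounds to all of $[0,T]$, and the constants stay finite because $K$ is finite.

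\textbf{Step 2 (viscosity inequalities).} Take $\varphi\in C^{1,2}$ with $S-\varphi$ having a local maximum at $(t_0,w_0)$, normalized so that $S(t_0,w_0)=\varphi(t_0,w_0)$. Fix a constant control $(\rho,u)$ and apply the DPP on $[t_0,t_0+h]$ stopped at the exit time from a small ball. Then apply It\^o's formula to $\varphi$ and divide by $h$. This gives $\partial_t\varphi+\mathcal H(\cdot;\rho,u)\ge 0$ for every $(\rho,u)$, where the $\lambda\Ical$ term enters through the epoch intensity. Taking the minimum yields the subsolution inequality of Definition~\ref{def:visc}. For the supersolution inequality, take $\varepsilon h$-optimal controls in the DPP and use the uniform continuity of the integrand in $(t,w)$, uniformly in the control. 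This uniformity holds by compactness of the control set and (H3) of Lemma~\ref{lem:struct}, and gives $\partial_t\varphi+\bar H\le \varepsilon$, then $\varepsilon\downarrow0$. The nonlocal term is order zero, and it is evaluated on $S$ itself, so the first-order expansion only touches the local part. The terminal condition is immediate from $S(T,\cdot)=0$ and the continuity in $t$.

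\textbf{Main obstacle.} The delicate point is the supersolution direction together with the interaction between the epoch resets and the Markov structure. Nearly optimal controls depend on $\varepsilon$ and $h$, so one needs estimates that are uniform over the whole control class. One also needs the DPP to hold when $\Pi$ jumps at random times. To handle this, I would run the argument on $[\tau_k,\tau_{k+1})$, where $\Pi$ is deterministic and no epoch occurs near $t_0$ with probability $1-O(h)$. The $O(h)$ event of an epoch is then controlled by the quadratic growth bound of Step 1, and this is exactly what produces the $\lambda\Ical S$ term in the limit. If uniformity fails because of the state-dependent constraint sets $U_a(\hat X,\Nu)$, the fallback is to use their Lipschitz dependence on $\hat X$ and a measurable projection onto $U_a(\hat X_s,\Nu_s)$. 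This converts a control that is admissible from $w$ into one admissible from a nearby $w'$ at cost $O(|w-w'|)$.
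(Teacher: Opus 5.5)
Your outline (growth and continuity by Gr\"onwall, the dynamic-programming principle, then testing against smooth $\varphi$ as in Fleming--Soner, Ch.~V) is the same as the paper's very brief proof, and your technical additions are sound. In particular, the projection onto $U_a(\hat X_s,\Nu_s)$ is unproblematic, because $\hat X$ does not depend on $u$. The gap is the choice of state. You freeze $\Pi$ as the Riccati coefficient and work in $w=(\hat X,Y)$. But \eqref{eq:value} takes the infimum over $\rho$, and $\rho$ acts on the cost chiefly through the Riccati trajectory. Different disclosure policies generate different $\Pi(\cdot)$, so $S$ is not a function of $(t,w)$ alone. The DPP you borrow from the proof of Theorem~\ref{thm:master_robust} is in fact posed on $(\hat X,\Pi,Y)$. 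It\^o's formula for a test function $\varphi(t,w)$ cannot produce the term $\Tr(p_\Pi\,\dot\Pi_\rho)$ that sits inside $\min_{(\rho,u)}$ in \eqref{eq:Hbar}. So what you derive is not the inequality of Definition~\ref{def:visc}. Freezing $\Pi$ is coherent only for a fixed $\rho$. The section preamble speaks that way, but the paper's proof tests against \eqref{eq:Hbar}, which contains $p_\Pi$, and so implicitly treats $\Pi$ as a state.

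The same issue breaks your derivation of the nonlocal term. The interval $[\tau_k,\tau_{k+1})$ is not a fixed horizon, since $\tau_{k+1}$ is random and unknown at $\tau_k$. The epoch must therefore enter through its compensator. Your claim that the $O(h)$ epoch event, ``controlled by the quadratic growth bound'', produces $\lambda\Ical S$ does not deliver this. A growth bound shows the event contributes $O(h)$, i.e.\ $O(1)$ after dividing by $h$, but it does not identify the limit. For that you need three things:
$\Pprob(\text{exactly one epoch in }[t_0,t_0+h])=\lambda h+o(h)$;
$\Pprob(\text{two or more})=o(h)$;
and continuity of $S$ at the post-jump point $(t_0,\hat X_0,\Pi_0+\Sigma_J,Y_0)$ in all arguments, $\Pi$ included.
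The last is indispensable precisely because, as you say, the order-zero term must be evaluated on $S$: a local extremum of $S-\varphi$ gives no information at $\Pi_0+\Sigma_J$. Your Step~1 never establishes continuity in $\Pi$.

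The repair is to take $(\hat X,\Pi,Y)$ as the state, as in Step~1 of the proof of Theorem~\ref{thm:master_robust}. The upper-bound argument of Lemma~\ref{lem:riccati} keeps Riccati trajectories started in a bounded set of covariances inside a bounded set, uniformly over $\|\rho\|\le\bar\rho$. On that set $\mathcal R(\cdot,\rho)$ and $G_\rho$ are Lipschitz in $\Pi$, so Gr\"onwall gives continuity of $S$ jointly in $(t,\hat X,\Pi,Y)$. Then test with $\varphi(t,\hat X,\Pi,Y)$ and expand with the It\^o--L\'evy formula of Step~3 of that proof. This yields $\partial_t\varphi+\bar H+\lambda\Ical S$ in both directions and makes the interval-by-interval gluing unnecessary. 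Finally, your Step~1 uses the cap $N_T\le K$ while Step~2 uses the constant intensity $\lambda$. Under the cap the epoch intensity is $\lambda\mathbf 1\{N_t<K\}$, so either carry $N_t$ in the state or drop the cap.
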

\begin{proof}
Continuity and the growth bound follow from boundedness of the controls, Lipschitz coefficients, and Gronwall estimates on \eqref{eq:value}. The dynamic-programming principle,
$S(t,w)=\inf_{(\rho,u)}\E[\int_t^{t+h}\ell\,\diff s + S(t+h,w_{t+h})]$,
holds by the standard measurable-selection argument. Testing the DPP against a smooth $\varphi$ touching $S$ from above (resp.\ below) and dividing by $h\downarrow0$ gives the sub- (resp.\ super-) solution inequality with Hamiltonian \eqref{eq:Hbar}; see Fleming--Soner~\cite{FlemingSoner}, Ch.~V, and Bardi--Capuzzo-Dolcetta~\cite{BardiCapuzzo} for the first-order components.
\end{proof}

Theorem \ref{thm:existence} is the counterpart of Theorem~\ref{thm:master_robust} with its $C^{1,2}$ conditional removed: the saturated and bang-bang lower-level feedback renders $S$ non-$C^{1,2}$ across switching surfaces, and the equation is then satisfied in the viscosity sense.

\begin{theorem}[Comparison and uniqueness]\label{thm:comparison}
Let $\underline S$ be an upper semicontinuous viscosity subsolution and $\overline S$ a lower semicontinuous viscosity supersolution of \eqref{eq:isaacs}, both of polynomial growth, with $\underline S(T,\cdot)\le\overline S(T,\cdot)$. Then $\underline S\le\overline S$ on $[0,T]\times\mathcal O$. Consequently \eqref{eq:isaacs} has at most one viscosity solution of polynomial growth, namely the value function \eqref{eq:value}.
\end{theorem}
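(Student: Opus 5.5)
The plan is the standard doubling-of-variables argument of Crandall--Ishii--Lions, run on each inter-epoch interval where $\Pi$ is a deterministic coefficient (Lemma~\ref{lem:reduction}), followed by backward induction over the at most $K$ epochs (Assumption~\ref{ass:riccati}).

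\textbf{Step 1: strictify and localize.} Argue by contradiction and suppose $\sup(\underline S-\overline S)>0$. Since both functions have polynomial growth, I first add a strictifying and penalizing term: replace $\underline S$ by
\[
\underline S_{\gamma,\beta}(t,w):=\underline S(t,w)-\frac{\gamma}{t}-\beta e^{L(T-t)}(1+|w|^2)^{q/2},
\]
with $q$ exceeding the growth exponent. Then I check that $\underline S_{\gamma,\beta}$ is still a strict subsolution for $L$ large. This check uses (H2) and (H4) of Lemma~\ref{lem:struct}. The linear growth $C(1+|w|)$ in the gradient Lipschitz bound is absorbed by choosing $L$ large, because the gradient of the penalty is $O(|w|^{q-1})$ and the penalty is $O(|w|^q)$.

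\textbf{Step 2: double variables and apply Crandall--Ishii.} The positive supremum is now attained on a compact set. I double variables,
\[
\Phi_\varepsilon(t,w,w')=\underline S_{\gamma,\beta}(t,w)-\overline S(t,w')-\frac{|w-w'|^2}{2\varepsilon},
\]
and take a maximum point $(t_\varepsilon,w_\varepsilon,w'_\varepsilon)$. The standard penalization lemma gives $|w_\varepsilon-w'_\varepsilon|^2/\varepsilon\to0$, and $t_\varepsilon$ stays away from $T$ because the terminal data are ordered. The parabolic Crandall--Ishii lemma then furnishes matrices $(M,M')$ with $M\preceq M'$ in the $\hat X$ block, together with the usual matrix inequality. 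Subtracting the sub- and supersolution inequalities and using (H1), and then (H3) with $p=(w_\varepsilon-w'_\varepsilon)/\varepsilon$, gives
\[
\frac{\gamma}{T^2}\le \omega\big(|w_\varepsilon-w'_\varepsilon|(1+|p|)+\eta\big)+\lambda\big(\Ical\underline S-\Ical\overline S\big).
\]
The quantity $|w_\varepsilon-w'_\varepsilon||p|$ equals $|w_\varepsilon-w'_\varepsilon|^2/\varepsilon$, so the first term on the right tends to zero.

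\textbf{Step 3: the nonlocal term.} This is where I expect the main obstacle. The operator $\Ical$ shifts the $\Pi$-argument, but in the interval-by-interval reduction $\Pi$ is not a doubled variable. I would therefore not treat $\Ical$ as a difference evaluated at the maximum point. Instead, on $[\tau_k,\tau_{k+1})$ the post-jump value $S(\cdot,\Pi+\Sigma_J)$ is a known continuous function, already determined by the induction hypothesis on the later interval. The term $\lambda\Ical S=\lambda S(\cdot,\Pi+\Sigma_J)-\lambda S$ then splits into a known source term, which is identical for the sub- and supersolution and cancels in the difference, and a zeroth-order term $-\lambda S$. That zeroth-order term enters with the favourable (proper) sign of Remark~\ref{rem:jumpop}, because at a positive maximum it contributes $-\lambda(\underline S-\overline S)<0$ to the right-hand side. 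Should the data instead force a genuine comparison of post-jump values, I would fall back on the Barles--Imbert~\cite{BarlesImbert2008} argument. That argument uses only monotonicity at the maximum, which holds because $\Ical$ is order-zero and bounded.

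\textbf{Step 4: conclude.} Sending $\eta\to0$ and then $\varepsilon\to0$ yields $\gamma/T^2\le0$, a contradiction. Letting $\beta,\gamma\downarrow0$ gives $\underline S\le\overline S$ on the interval. Gluing across epochs uses the matching condition $S(\tau_k^-,\cdot)=S(\tau_k,\cdot)$, with the Riccati bounds of Lemma~\ref{lem:riccati} keeping $\Pi$ in a compact subset of the interior of the cone. Backward induction over at most $K$ intervals then extends the comparison to all of $[0,T]$. Uniqueness follows by applying comparison in both directions to any two viscosity solutions. By Theorem~\ref{thm:existence} the value function is one such solution, so it is the only one.
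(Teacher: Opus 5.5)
\textbf{Where the gap is.} Your Steps~1--2 and your uniqueness argument are standard and agree with the paper's proof, which strictifies with $e^{\kappa t}$ rather than $-\gamma/t$. Step~3, and the interval-by-interval framework around it, do not apply to \eqref{eq:isaacs}.

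In that equation $\Pi$ is an argument of the unknown. The Hamiltonian contains $\Tr(\partial_\Pi S\,\mathcal R(\Pi,\rho))$ inside $\min_{(\rho,u)}$. The term $\Ical S$ evaluates the \emph{same} unknown at $(t,\hat X,\Pi+\Sigma_J,Y)$, at the \emph{same} time $t$. The equation has no fixed epochs $\tau_k$ and no epoch counter: the epochs are averaged in through the intensity $\lambda$. You are mixing two descriptions. Conditionally on the epoch path there is no $\lambda\Ical$ term at all; unconditionally there are no intervals.

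Three consequences follow.
\begin{enumerate}
\item There is no ``later interval'' on which $S(\cdot,\Pi+\Sigma_J,\cdot)$ has already been determined.
\item There is no time at which arbitrary sub- and supersolutions could be glued. The matching condition $S(\tau_k^-)=S(\tau_k)$ is a pathwise property of the value function, not of a generic semicontinuous subsolution.
\item Hence $\underline S(t,\hat X,\Pi+\Sigma_J,Y)$ and $\overline S(t,\hat X,\Pi+\Sigma_J,Y)$ are two distinct unknowns. They neither cancel nor are ordered a priori.
\end{enumerate}
Freezing $\Pi$ at ``its Riccati value'' also fails. A Riccati trajectory presupposes a given $\rho(\cdot)$. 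A viscosity subsolution of a $\min_\rho$ equation is a subsolution of every frozen-$\rho$ equation, but a supersolution is not a supersolution of any of them. So $\overline S$ cannot be reduced along a single trajectory.

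\textbf{What the paper does instead.} Unlike the interval sketch at the start of Section~\ref{sec:viscosity}, the paper's proof keeps $\Pi$ among the doubled variables. $\Pi$ carries no second-order derivative. It enters $\bar H$ only through the transport term $\Tr(p_\Pi\,\mathcal R(\Pi,\rho))$ and the locally Lipschitz diffusion coefficient $\Pi\rho^\top R^{-1/2}$, so it is doubled exactly like $Y$. You then maximize $\Phi_\varepsilon$ over $(\Pi,\Pi')$ as well. The doubling penalty depends only on $\Pi-\Pi'$, and $\Pi+\Sigma_J\in\Sym^n_+$ whenever $\Pi\in\Sym^n_+$. So the shifted pair $(\Pi_\varepsilon+\Sigma_J,\Pi'_\varepsilon+\Sigma_J)$ is an admissible competitor, and maximality gives $\Ical\underline S_{\gamma,\beta}(t_\varepsilon,w_\varepsilon)-\Ical\overline S(t_\varepsilon,w'_\varepsilon)\le 0$. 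The shift of your localization weight contributes only $O\big(\beta e^{L(T-t)}(1+|w|)^{q-1}\big)$, which you absorb in Step~1 by taking $L$ large.

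This is the Barles--Imbert monotonicity. It comes from global maximality in the shifted variable, not from $\Ical$ being order-zero and bounded. That is why your fallback is incompatible with your decision not to double $\Pi$.

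\textbf{Salvaging the cascade idea.} Your cascade idea becomes rigorous only after reformulating with an epoch counter $k\le K$, which Assumption~\ref{ass:riccati} arguably calls for. One then inducts on $k$, not on time. The term $\lambda S_{k+1}(\cdot,\Pi+\Sigma_J,\cdot)$ becomes a source controlled by the induction hypothesis, and $-\lambda S_k$ is proper. But that is a different system from \eqref{eq:isaacs}, and even there $\Pi$ must be doubled because of the $\rho$-controlled Riccati transport.
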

\begin{proof}
Assume for contradiction $\sup(\underline S-\overline S)>0$. Use the change $\tilde S=\exp(\kappa t)S$ to obtain strict monotonicity in the zeroth-order term, the polynomial-growth penalization $-\varepsilon \exp(\lambda(T-t))(1+|w|^2)$ to localize on the unbounded $\mathcal O$, and the variable-doubling penalty $\tfrac{|w-w'|^2}{2\eta}$. The Crandall--Ishii lemma supplies $M\preceq M'$ at the doubling maximum, and property (H1) (degenerate ellipticity) controls the second-order difference, (H2)--(H3) control the gradient and state terms, and the $\exp(\kappa t)$ factor controls the zeroth order. At the penalized maximum the nonlocal terms satisfy $\Ical\underline S-\Ical\overline S\le0$, since $\Ical$ is the difference operator of Section~\ref{sec:master} evaluated at the common shifted argument $\Pi+\Sigma_J$, so $\lambda\Ical$ contributes with the favourable sign and the Jensen--Ishii lemma for integro-differential equations applies~\cite{BarlesImbert2008}. The standard estimates (Crandall--Ishii--Lions~\cite{CrandallIshiiLions}, \emph{User's Guide}, Thm.~8.2) then drive the penalized maximum to a contradiction as $\eta,\varepsilon\downarrow0$, since $\Pi$ enters \eqref{eq:isaacs} only through first-order (transport) and Lipschitz terms and so is handled exactly as the first-order $Y$-variables. Hence $\underline S\le\overline S$. Uniqueness follows by applying this to two solutions in both orders; Theorem~\ref{thm:existence} identifies the unique solution with the value function.
\end{proof}

\subsection{Semiconcavity, the switching set, and the closed loop}\label{sec:filippov}

\begin{proposition}[Semiconcavity and a.e.\ well-defined feedback]\label{prop:semiconcave}
Assume in addition that each $c_a$ is $C^{1,1}$ in $u$, that $f_a$ is semiconcave in $\hat X$, and that the admissible sets are given by finitely many inequality constraints,
\[
 U_a(\hat X,\Nu)=\{\,u_a:\ g_{a,i}(\hat X,u_a)\le 0,\ i=1,\dots,m_a\,\},
\]
with each $g_{a,i}$ semiconcave in $\hat X$ and convex in $u_a$. Then for each $t$ the value function $S(t,\cdot)$ is semiconcave in the local state variables $Y$, uniformly on compacts. Consequently:
\begin{enumerate}
\item[(a)] $\nabla_Y S(t,\cdot)$ exists Lebesgue-almost everywhere and is of bounded variation, and the singular (kink) set $\Sigma_t=\{w:\,S(t,\cdot)\text{ not differentiable in }Y\}$ is countably rectifiable of dimension $N-1$, hence of zero Lebesgue measure;
\item[(b)] the switching set of the linear components, on which the corresponding coefficient of $\nabla_YS$ vanishes, meets $\Sigma_t$ in a null set, so the bang-bang feedback of Theorem~\ref{thm:nash_feedback}(ii) and the saturated feedback of Theorem~\ref{thm:nash_feedback}(i) are single-valued a.e.;
\item[(c)] the semiconcavity of $S$ in $Y$ makes the closed-loop drift one-sided Lipschitz, so the Filippov/Carath\'eodory closed-loop inclusion $\dot w_t\in \overline{\mathrm{co}}\,F(t,w_t)$ has a unique absolutely continuous solution from each initial condition.
\end{enumerate}
\end{proposition}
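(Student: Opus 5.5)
The plan is to prove the semiconcavity claim directly from the control representation \eqref{eq:value}, working interval by interval between observed epochs as in the opening of Section~\ref{sec:viscosity}, and then to read off (a)--(c) from general facts about semiconcave functions. Fix $t$, an inter-epoch interval, and a base point $w=(\hat X,Y)$. The admissible set \eqref{eq:caps} depends on $(\hat X,\Nu)$ only and not on $Y$, so any $\varepsilon$-optimal pair $(\rho,u)$ for the initial condition $(t,\hat X,Y)$ remains admissible from $(t,\hat X,Y\pm h)$. The same holds for the noise $\beta$ and for $\Pi$, which is deterministic given $\rho$ by Lemma~\ref{lem:reduction}. The belief mean $\hat X_s$ is therefore identical along the three trajectories; only the $Y$-paths differ, and by \eqref{eq:Yred} they solve ODEs with the same input. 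This gives
\[
S(t,\hat X,Y+h)+S(t,\hat X,Y-h)-2S(t,\hat X,Y)\le \E\int_t^T\big(\ell^{+}+\ell^{-}-2\ell\big)\diff s+3\varepsilon .
\]
I will bound the integrand by $C|h|^2$ with two estimates.
\begin{itemize}
\item Lipschitz dependence of $f_a$ on $Y$ and Gr\"onwall give $|Y^{\pm}_s-Y_s|\le C|h|$.
\item A second Gr\"onwall step, applied to $Y^{+}+Y^{-}-2Y$, gives $|Y^{+}_s+Y^{-}_s-2Y_s|\le C|h|^2$.
\end{itemize}
The second estimate requires $f_a$ and $c_a$ to be $C^{1,1}$ (or at least semiconcave) in $Y$. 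I will state this explicitly as the $Y$-analogue of the hypotheses listed for $\hat X$. The hypotheses on $g_{a,i}$ and on $\hat X$ are not needed for $Y$-semiconcavity itself; they keep the constant uniform on compacts when the base point $\hat X$ varies. At an epoch the reset $\Pi\mapsto\Pi+\Sigma_J$ does not touch $Y$. Since there are at most $K$ intervals (Assumption~\ref{ass:riccati}), backward induction with the matching condition preserves the semiconcavity constant up to a factor depending on $K$ and $T$. This proves the main claim.

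For (a), I write $S(t,\cdot)-\tfrac{C}{2}|Y|^2$ as a concave function locally. Rademacher/Alexandrov then give a locally Lipschitz $S$ with $\nabla_YS$ existing a.e. and of locally bounded variation, since it is the gradient of a concave function plus a linear term. The Alberti--Ambrosio--Cannarsa structure theorem \cite[Ch.~4]{CannarsaSinestrari} gives countable $(N-1)$-rectifiability of the singular set $\Sigma_t$, hence $\mathcal L^N(\Sigma_t)=0$.

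For (b), off $\Sigma_t$ the superdifferential is the singleton $\{\nabla_YS\}$. The minimizer of the strictly convex block is then unique, as in Theorem~\ref{thm:nash_feedback}(i), so the saturated feedback is single-valued off the null set $\Sigma_t$. For the linear block, the minimizer is unique wherever no component of $B^{\mathrm l\top}\nabla_YS$ vanishes. The intersection of the switching set with $\Sigma_t$ is null simply because $\Sigma_t$ is. The delicate part is the switching set off $\Sigma_t$. I will show that on any positive-measure portion of it the linear term drops out of the Hamiltonian \eqref{eq:Hbar}, so every selection yields the same value of $\bar H$ and the same cost. Single-valuedness then holds up to the payoff-equivalence of Remark~\ref{rem:nullset}. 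I will first try to prove that the zero set of such a component is actually null, using the BV structure: the distributional derivative of a component vanishing on a positive-measure set has no absolutely continuous part there. If that does not go through, I will fall back on the payoff-equivalence formulation.

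For (c), the step I expect to be the main obstacle is turning semiconcavity into a one-sided Lipschitz drift. The superdifferential of a $C$-semiconcave function satisfies $\langle p-q,\,y-y'\rangle\le C|y-y'|^2$ for $p\in\partial^+S(y)$ and $q\in\partial^+S(y')$. The bang-bang drift is $B^{\mathrm l}u^{\mathrm l}$ with $u^{\mathrm l}\in\partial\sigma_{U^{\mathrm l}}(-B^{\mathrm l\top}p)$, which is a monotone map of $-p$. I will combine these two monotonicity properties, together with Lipschitz continuity of the remaining drift terms, to obtain
\[
\langle F(y)-F(y'),\,y-y'\rangle\le L|y-y'|^2
\]
for the convexified Filippov map. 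Here I expect to need a compatibility condition between $B^{\mathrm l}$ and the sign structure, for instance a diagonal or transfer-antisymmetric form as in the power instantiation. Once the one-sided Lipschitz bound is established, Filippov's uniqueness theorem for one-sided Lipschitz inclusions, with existence from boundedness and measurability as in Proposition~\ref{prop:single_agent}(d), gives a unique absolutely continuous closed-loop trajectory.
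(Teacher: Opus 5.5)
Your plan breaks at (c), and the missing ingredient is not a compatibility condition on $B^{\mathrm l}$: the two monotonicity facts you want to combine point in opposite directions. Semiconcavity gives $\langle p-q,\,y-y'\rangle\le C|y-y'|^2$, which allows $\nabla_YS$ to drop across a concave kink. The bang-bang selection is nonincreasing in the gradient, $\langle B^{\mathrm l}(u^{\mathrm l}(p)-u^{\mathrm l}(q)),\,p-q\rangle\le0$. So wherever $\nabla_YS$ drops, the drift jumps the other way, which is a repelling discontinuity and exactly what one-sided Lipschitz forbids.

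This is not a weakness of the technique; the uniqueness claim itself fails. Take a scalar $Y$ (one coordinate of a larger system if you want $N>1$) with $\dot Y=u$, where $u\in[-1,1]$ carries no own cost. Let $\ell=-\tfrac12Y^2$, which is smooth, of quadratic growth, and satisfies your added $C^{1,1}$ hypothesis; adjoin a decoupled strictly convex block to meet the standing assumptions literally. Then $S(t,Y)=-\tfrac12\int_0^{T-t}(|Y|+s)^2\,\mathrm{d}s$ is concave with a kink at $Y=0$, and the optimal feedback is $u^\star=\operatorname{sign}(Y)$. The Filippov set at $Y=0$ is $[-1,1]$, so $Y\equiv0$, $Y_s=s-t$ and $Y_s=t-s$ all solve the closed-loop inclusion from $(t,0)$, the last two being optimal. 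Since $B^{\mathrm l}=1$ here, no structural condition on $B^{\mathrm l}$ can rescue the step.

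What would give the right sign is the reverse inequality, i.e.\ semiconvexity of $S(t,\cdot)$ in $Y$. Convexity is one instance; it holds at a fixed disclosure policy when $f_a$ is affine in $(Y,u)$, $\ell$ is jointly convex in $(Y,u)$, and $U_a$ does not depend on $Y$, as in the power instantiation. Even then, for $N\ge2$, a piecewise-constant field is one-sided Lipschitz across a switching surface only if its jump is normal to that surface. A transfer direction $e_b-e_a$ generally is not, so uniqueness would have to come from a Filippov sliding-surface argument instead. The paper's own proof of (c) identifies monotonicity of $-\nabla_YS$ with one-sided Lipschitz continuity of the optimal drift and carries the same sign error, so it will not supply the missing step.

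The rest follows the paper's route and is fine. Your three-point comparison is the standard proof of the value-function semiconcavity theorem the paper cites, and (a) is Alexandrov plus the Alberti--Ambrosio--Cannarsa structure theorem, as in the paper. Two refinements are worth making.
\begin{itemize}
\item Freeze the realized open-loop input $u_s(\omega)$ rather than the feedback map, since a feedback re-evaluated from $Y\pm h$ yields different inputs. This uses that open-loop and Markov values coincide.
\item Your extra hypothesis that $c_a$ and $f_a$ be $C^{1,1}$ in $Y$ is genuinely necessary, because the stated hypotheses constrain only $u$ and $\hat X$. With $f_a\equiv0$ and $c_a=|Y_a|+\tfrac12|u_a|^2$ one gets $S=(T-t)\sum_a|Y_a|$, which is not semiconcave. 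The paper's proof assumes this tacitly through ``semiconcave data''.
\end{itemize}
For (b), your suspicion is correct and the BV route cannot work. Suppose $\ell$ depends on $(Y_1,Y_2)$ only through $Y_1+Y_2$ and the tie moves $Y_1,Y_2$ in opposite directions. Then $S=\phi(t,Y_1+Y_2)$, the transfer coefficient $\partial_{Y_2}S-\partial_{Y_1}S$ vanishes identically, and the switching set is the whole space. So (b) as stated fails at this generality, and your payoff-equivalence fallback is the right target. The paper's assertion that the switching set is a Lipschitz hypersurface crossed transversally is unsupported.
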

\begin{proof}
The $Y$-dynamics \eqref{eq:Yred} are affine in the control with $C^{1,1}$ running cost; semiconcavity of $S(t,\cdot)$ in $Y$ is the value-function semiconcavity theorem for finite-horizon control with semiconcave data and convex velocity sets (\cite{CannarsaSinestrari}, Thm.~7.4.11), the second-order $\hat X$-block contributing a bounded perturbation that preserves semiconcavity uniformly on compacts. (a) is Alexandrov's theorem together with the structure of the singular set of a semiconcave function (Cannarsa--Sinestrari~\cite{CannarsaSinestrari}, Ch.~4). (b): on the differentiability set the transfer Hamiltonian is the piecewise-linear map of Section \ref{sub_opt_ham}, whose kink locus $\{p_{Y_a}=p_{Y_b}\}$ is a Lipschitz hypersurface crossed transversally by the flow a.e.; the exceptional set is null. (c): a semiconcave $S(t,\cdot)$ has $-\nabla_Y S$ monotone (one-sided Lipschitz), and the optimal drift is a monotone (one-sided Lipschitz) map of $w$; Filippov solutions of one-sided Lipschitz inclusions exist and are unique (Filippov; Clarke et al.~\cite{ClarkeLedyaev}).
\end{proof}

\subsection{Viscosity verification}
We continue with the verification when we remove the classical regularity assumption.

\begin{theorem}[Verification without classical regularity]\label{thm:viscverify}
Let $S$ be the value function \eqref{eq:value}, i.e.\ the unique viscosity solution of \eqref{eq:isaacs} (Theorems~\ref{thm:existence}--\ref{thm:comparison}). Let $(\rho^\star,u^\star)(t,w)$ be a measurable selection attaining the pointwise $\min_{(\rho,u)}$ in \eqref{eq:Hbar} for $p=\nabla S$, $M=\nabla^2_{\hat X}S$ at every point of differentiability of $S$, and suppose the closed-loop system admits a solution in the sense of Proposition~\ref{prop:semiconcave}. Then the conclusions (i)--(iii) of Theorem~\ref{thm:verify} hold with this $S$: $S(0,w_0)$ is the optimal Stackelberg value, $\rho^\star$ is an optimal transparency policy, and along the optimal trajectory $u^\star$ is a lower-level Nash equilibrium. No $C^{1,2}$ regularity of $S$ is required.
\end{theorem}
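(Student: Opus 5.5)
We replace the It\^o step of Theorem~\ref{thm:verify} by two viscosity-theoretic facts about the value function and the closed loop. The lower bound $S(0,w_0)\le \E\int_0^T\ell\,\diff t$ for every admissible $(\rho,u)$ requires no smoothness: by Theorem~\ref{thm:comparison}, $S$ coincides with the value function \eqref{eq:value}, and the value function is by definition below the cost of every admissible pair. So (i) reduces to showing that $(\rho^\star,u^\star)$ \emph{attains} $S(0,w_0)$. Then (ii) follows immediately, and (iii) follows from Theorem~\ref{thm:collapse} exactly as in Theorem~\ref{thm:verify}(iii). The $u$-block of the minimizer is the social optimizer, which under the Groves transfer is each agent's best response; this holds up to null-set modification, by Remark~\ref{rem:nullset} and Theorem~\ref{thm:ic}(iii).

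For attainment I would work interval by interval on $[\tau_k,\tau_{k+1})$, with $\Pi$ frozen as a deterministic coefficient (Lemma~\ref{lem:reduction}). I would glue the intervals by the matching condition at resets and use backward induction over at most $K$ epochs, so the nonlocal term $\lambda\Ical S$ is handled by the reset rather than within the interval. On each interval I would mollify. Set $S^\varepsilon=S*\eta_\varepsilon$ in $w$, cut off to compacts using the quadratic growth bound. By semiconcavity in $Y$ (Proposition~\ref{prop:semiconcave}), $S^\varepsilon$ is a viscosity subsolution up to an error $o(1)$ locally uniformly. The reason is that $\bar H$ is concave in $(p,M)$, being a minimum of affine functions, so Jensen's inequality applies to convolutions, while the state modulus (H3) controls the shift in $w$. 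Applying It\^o's formula to $S^\varepsilon$ along the closed-loop trajectory $w^\star$ gives
\[
\E\,S^\varepsilon(T,w^\star_T)-S^\varepsilon(0,w_0)=\E\int_0^T\big(\partial_tS^\varepsilon+\Hcal(\nabla S^\varepsilon,\nabla^2 S^\varepsilon;\rho^\star,u^\star)-\ell\big)\diff t .
\]
It then suffices to show that the bracket converges to $\partial_tS+\bar H(\nabla S,\nabla^2S)=0$ in $L^1$ of the occupation measure of $w^\star$. This yields $S(0,w_0)\ge\E\int\ell$ in the limit.

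The main obstacle is exactly this convergence. Two ingredients are needed. First, $\nabla_Y S^\varepsilon\to\nabla_YS$ a.e.; this holds on the complement of the null singular set $\Sigma_t$ by Alexandrov/Proposition~\ref{prop:semiconcave}(a). Second, the closed-loop law must not charge null sets of $(t,\hat X,Y)$. For the $\hat X$ coordinates this would follow from the nondegenerate diffusion $G_\rho$, since $\Lambda_\rho\succ0$ whenever $\rho^\star$ has full rank. For the $Y$ coordinates I would use the transversality in Proposition~\ref{prop:semiconcave}(b) to argue that the trajectory spends zero time on the switching set. I expect this occupation-time claim to be the delicate point, because $Y$ is driven without noise. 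If it fails, I would fall back on a one-sided argument. Semiconcavity gives $\nabla^2 S^\varepsilon\preceq C I$ in $Y$, and the concave structure of $\bar H$ then yields
\[
\partial_tS^\varepsilon+\Hcal(\cdot;\rho^\star,u^\star)\le \partial_tS^\varepsilon+\bar H+o(1),
\]
evaluated with the optimal control at nearby differentiability points. By Lipschitz continuity (H2) in $p$, this gives only the required inequality direction and avoids any need for pointwise convergence on the switching set. Passing $\varepsilon\downarrow0$ by dominated convergence, using the growth bounds and the boundedness of the controls, completes (i).
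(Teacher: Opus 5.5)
Your lower bound is correct and simpler than the paper's. The statement identifies $S$ with the value function \eqref{eq:value}, so $S(0,w_0)\le\E\int_0^T\ell\,\diff t$ for every admissible pair is immediate; the paper instead extracts it from the subsolution property through a nonsmooth It\^o/Dynkin inequality. Part (iii) is argued as in the paper.

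The gap is the attainment inequality $\E\int_0^T\ell(\cdot;\rho^\star,u^\star)\,\diff t\le S(0,w_0)$, which carries the whole content of the theorem. The paper obtains it by asserting, via Proposition~\ref{prop:semiconcave}, that $S$ is differentiable along the closed loop for a.e.\ $t$. It then reads $\partial_tS+\bar H=0$ pointwise there, where the selection attains $\bar H$. Your mollification route needs that fact and more, and leaves it open.

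Your doubt about that fact is well placed. Proposition~\ref{prop:semiconcave} only makes $\Sigma_t$ Lebesgue-null in state space. $Y$ is noise-free, and $\Lambda_\rho=\Pi\rho^\top R^{-1}\rho\Pi$ has rank at most $m$ and vanishes wherever $\rho^\star=0$. So the law of $w^\star_t$ can be singular, and the nondegeneracy you invoke for $\hat X$ is not available. The proposal therefore has to supply a substitute, and the one offered does not work.

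Concretely, three steps fail.
\begin{enumerate}
\item The approximate-subsolution property of $S^\varepsilon$, obtained from concavity of $\bar H$ via Jensen, gives $\partial_tS^\varepsilon+\Hcal(\nabla S^\varepsilon,\nabla^2S^\varepsilon;\rho,u)\ge-o(1)$ for \emph{every} control. That only reproduces the lower bound. Attainment needs the reverse estimate for $(\rho^\star,u^\star)$ along $w^\star$, and for a Hamiltonian concave in $(p,M)$ convolution preserves approximate subsolutions, not supersolutions.
\item You address only $\nabla_YS^\varepsilon\to\nabla_YS$. The bracket also contains $\partial_tS^\varepsilon$, $\nabla_{\hat X}S^\varepsilon$, and the sole second-order term $\tfrac12\Tr(\nabla^2_{\hat X}S^\varepsilon\Lambda_{\rho^\star})$. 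Semiconcavity is known only in $Y$, so nothing controls $\nabla^2_{\hat X}S^\varepsilon$. In particular your fallback bound $\nabla^2_YS^\varepsilon\preceq CI$ concerns a Hessian block that never enters $\Hcal$, since $Y$ carries no diffusion.
\item The fallback does not escape the occupation-time issue. The selection $(\rho^\star,u^\star)(w)$ is optimal at $(\nabla S(w),\nabla^2_{\hat X}S(w))$, and $\Hcal$ is affine in $(p,M)$ with coefficients of size $C(1+|w|)$. So (H2) only yields $\Hcal(\nabla S^\varepsilon,\nabla^2_{\hat X}S^\varepsilon;\rho^\star,u^\star)\le\bar H(\nabla S^\varepsilon,\nabla^2_{\hat X}S^\varepsilon)+C(1+|w|)\big(|\nabla S^\varepsilon-\nabla S|+|\nabla^2_{\hat X}S^\varepsilon-\nabla^2_{\hat X}S|\big)$. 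This again needs convergence at $w^\star_t$ for a.e.\ $t$. At a kink, $\nabla S^\varepsilon$ is an average of one-sided gradients and close to none of them, so borrowing the control from a nearby differentiability point can cost $O(1)$ rather than $o(1)$.
\end{enumerate}

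What is missing is a genuinely one-sided, supersolution-direction chain rule along the closed loop. For instance, semiconcavity of $S$ in $\hat X$ as well would make the singular part of $D^2_{\hat X}S$ satisfy $\preceq0$, so it enters a Meyer--It\^o expansion with the favourable sign. That would still need to be paired with a reason the path lies a.e.\ in the set where the HJB holds pointwise.

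A smaller point: the nonlocal term is not absorbed by the resets. The value \eqref{eq:value} prices future epochs, so between epochs $\partial_tS+\bar H=-\lambda\Ical S$, not $0$. This cancels against the reset jumps only in expectation, through their compensator, so your It\^o identity must carry both terms.
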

\begin{proof}
The lower bound $S(0,w_0)\le \E\!\int_0^T\ell$ for every admissible $(\rho,u)$ is the subsolution property integrated along trajectories: since $S$ is a viscosity subsolution, the nonsmooth It\^o / Dynkin inequality for viscosity solutions (Lions~\cite{Lions1983}; Fleming--Soner~\cite{FlemingSoner}, Ch.~V; or the doubling argument applied to $S$ and the flow) yields $S(0,w_0)\le \E[\int_0^T\ell\,\diff s + S(T,w_T)]=\E\int_0^T\ell$. The reverse inequality at $(\rho^\star,u^\star)$ uses the supersolution property along the closed-loop flow, on which $S$ is differentiable for a.e.\ $t$ (see Proposition \ref{prop:semiconcave}); there $\partial_tS+\bar H(\cdot,\nabla S,\nabla^2_{\hat X}S)=0$ holds pointwise a.e.\ and the selection attains the Hamiltonian, giving $\E\int_0^T\ell=S(0,w_0)$. Thus the value is attained and $S(0,w_0)$ is optimal, proving (i)--(ii). Claim (iii) is the efficiency collapse: the inner minimizer $u^\star$ is the social optimizer and, under the Groves alignment, each agent's best response, hence a lower-level Nash equilibrium.
\end{proof}

Proposition~\ref{prop:semiconcave} legitimizes the bang-bang feedback of the Nash-strategy theorem: although discontinuous, it is defined a.e., the switching set is negligible, and the closed loop is a well-posed Filippov flow. Together with Theorems~\ref{thm:existence}--\ref{thm:viscverify} this discharges the classical-regularity assumption of Section~\ref{sec:bilevel}: the master value $S$ is the unique viscosity solution of \eqref{eq:isaacs}, the verification conclusions hold without $C^{1,2}$, and the leader's envelope-based first-order condition is evaluated along the a.e.-differentiable optimal trajectory.

\section{Instantiation: Multi-Area Power Systems}\label{sec:power}
This section instantiates $(f_a,U_a,c_a,\pi)$ on multi-area power-system coordination under extreme weather, the setting calibrated in Section~\ref{sec:numerics}.

\paragraph{Agents and local states.} $\Acal$ indexes $N>1$ operational regions connected by cross-border intertie lines $\mathcal{T}$, and $Y_{a,t}\in\R$ is Area $a$'s reserve-shortage state. Writing $\mathcal{G}_a$ for the generating units of Area $a$, $D_a(\hat X_t)$ for its weather-dependent demand, $P_{a,g,t}$ for the dispatch of unit $g$, and $T_{ab,t}$ for the outflow $a$ sends to $b$, the transition map $f_a$ of \eqref{eq:Ydyn} is

\begin{equation}\label{eq:Ydyn_pow}
f_a = D_a(\hat{X}_t) - \!\!\sum_{g \in \mathcal{G}_a}\!\! P_{a,g,t}
- \!\!\sum_{b : (a,b)\in\mathcal{T}}\!\! T_{ab,t} + \!\!\sum_{b : (b,a)\in\mathcal{T}}\!\! T_{ba,t},
\end{equation}

which is affine in the joint control, as \eqref{eq:Ydyn} requires. Area $a$'s control is $u_a=(P_{a,\cdot},T_{a\cdot})$.

\paragraph{Admissible sets.} Generation and intertie capabilities degrade with the observed structural state, capturing icing:
\begin{equation}\label{eq:caps_pow}
\begin{gathered}
0 \le P_{a,g,t} \le P_{a,g,\max}(\hat X_t,\Nu_t), \\
0 \le T_{ab,t} \le \overline{T}_{ab}(\hat X_t,\Nu_t),
\end{gathered}
\end{equation}
so $U_a(\hat X,\Nu)$ of \eqref{eq:caps} is the box \eqref{eq:caps_pow}, which is convex and compact, and Lipschitz in $\hat X$ whenever the envelopes $P_{a,g,\max},\overline T_{ab}$ are. The box is cut out by constraints affine in the control, so the hypothesis of Proposition~\ref{prop:semiconcave} reduces to a condition on the envelopes in $\hat X$. In the calibrated model of Section~\ref{sec:numerics} the envelopes depend on elapsed time and the observed disruption record only, capability being lost when equipment actually trips rather than when severity is merely believed high; $U_a$ is then independent of $\hat X$ and the hypothesis holds trivially. Where belief-dependent envelopes are wanted, it suffices that they be $C^{1,1}$ in $\hat X$. The envelopes depend on the state only through the $\Gfilt_t$-observable pair $(\hat X_t,\Nu_t)$: physical degradation is driven by believed severity and by the observed disruption record, both available to the operators.

\paragraph{Running cost.} The cost $c_a$ of \eqref{eq:cost} is

\begin{equation}\label{eq:cost_pow}
c_a = \sum_{g \in \mathcal{G}_a} C_{a,g}(P_{a,g,t}) + \frac{\Gamma_a}{2} |Y_{a,t}|^2 ,
\end{equation}

with $C_{a,g}$ the production cost of unit $g$, $C^1$, strictly convex and increasing, and $\Gamma_a>0$ the unserved-energy penalty, Area $a$'s Value of Lost Load, weighting the squared shortage. Strict convexity in $P_{a,\cdot}$ and linearity in $T_{a\cdot}$ are exactly the structure Assumption~\ref{ass:data} requires.

\paragraph{Equilibrium form.} Specializing Theorem~\ref{thm:nash_feedback} to \eqref{eq:Ydyn_pow}--\eqref{eq:cost_pow} gives the explicit feedback.
\begin{corollary}[Saturated generation and bang-bang transfers]\label{cor:power_feedback}
Under Assumption~\ref{ass:data} and the instantiation \eqref{eq:Ydyn_pow}--\eqref{eq:cost_pow}, the equilibrium controls are
\begin{align}\label{eq:Pstar}
P_{a,g,t}^* = \max\!\Big( 0, \, \min\big(P_{a,g,\max}(\hat X_t,\Nu_t), (\dot{C}_{a,g})^{-1}(\nabla_Y V_a) \big) \Big),
\end{align}
a saturated soft projection, and
\begin{equation}\label{eq:Tstar}
T_{ab,t}^* = \overline{T}_{ab}(\hat X_t,\Nu_t)\, \mathbf{1}\!\left\{ \nabla_Y V_a < \nabla_Y V_b \right\},
\end{equation}
that is, $a$ sends at full capacity to $b$ precisely when $b$'s marginal shortage value exceeds $a$'s, and nothing otherwise, with arbitrary tie-breaking on the switching surface.
\end{corollary}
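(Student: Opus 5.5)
The plan is to specialize the pointwise minimization in the HJB bracket \eqref{eq:hjb} to the instantiation \eqref{eq:Ydyn_pow}--\eqref{eq:cost_pow}, and then read off the two blocks using Theorem~\ref{thm:nash_feedback}(i) and (ii). The first step is to substitute $f_b$ from \eqref{eq:Ydyn_pow} into $\sum_b \nabla_{Y_b}V_a^\top f_b$, keeping only the terms that depend on Area $a$'s own control $u_a=(P_{a,\cdot},T_{a\cdot})$. The generation $P_{a,g}$ enters only $f_a$, with coefficient $-\nabla_{Y_a}V_a$. The outflow $T_{ab}$ enters $f_a$ with coefficient $-1$ and $f_b$ with coefficient $+1$. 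Its total bracket coefficient is therefore $\nabla_{Y_b}V_a-\nabla_{Y_a}V_a$. The shortage penalty $\tfrac{\Gamma_a}{2}|Y_a|^2$ and the demand term do not depend on $u_a$, so they drop out. The own-control part of the minimand is then
\[
\sum_{g\in\mathcal G_a}\big(C_{a,g}(P_{a,g})-\nabla_{Y_a}V_a\,P_{a,g}\big)\;+\;\sum_{b:(a,b)\in\mathcal T}\big(\nabla_{Y_b}V_a-\nabla_{Y_a}V_a\big)T_{ab}.
\]
By \eqref{eq:caps_pow}, $U_a$ is a product of intervals, so this minimization separates coordinate by coordinate.

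For the generation block, each $C_{a,g}$ is $C^1$ and strictly convex, so $\dot C_{a,g}$ is strictly increasing and invertible on its range. The unconstrained stationary point is $(\dot C_{a,g})^{-1}(\nabla_Y V_a)$. Projecting a scalar strictly convex minimizer onto the interval $[0,P_{a,g,\max}]$ is the same as clipping it. This gives \eqref{eq:Pstar}, which is exactly Theorem~\ref{thm:nash_feedback}(i). If $\nabla_Y V_a$ falls outside the range of $\dot C_{a,g}$, the inverse should be read as its monotone extension to $\pm\infty$, and the clip still returns the minimizer by monotonicity of the derivative.

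For the transfer block, each $T_{ab}$ enters linearly with no own running cost. By Theorem~\ref{thm:nash_feedback}(ii), the minimizer over $[0,\overline T_{ab}]$ is $\overline T_{ab}$ when the coefficient is negative and $0$ when it is positive, with any tie-breaking allowed where it vanishes. The main obstacle is getting this coefficient into the form stated in \eqref{eq:Tstar}, namely $\nabla_Y V_a$ against $\nabla_Y V_b$. The raw coefficient is $\nabla_{Y_b}V_a-\nabla_{Y_a}V_a$: it involves Area $a$'s value function differentiated in $b$'s state, not $b$'s own marginal value. To close this gap I would invoke the Groves alignment (Theorems~\ref{thm:nash_exist} and \ref{thm:collapse}). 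Under that alignment, each agent's transfer-adjusted value equals the social value up to terms independent of its own control. The relevant coefficient is therefore $\partial_{Y_b}S-\partial_{Y_a}S$. The shortage penalty $\tfrac{\Gamma_b}{2}|Y_b|^2$ is carried by $b$, so $\partial_{Y_b}S$ is $b$'s marginal shortage value, which the statement writes as $\nabla_Y V_b$. With this identification the coefficient is $\nabla_Y V_b-\nabla_Y V_a$. Full outflow $T_{ab}=\overline T_{ab}$ is then optimal exactly when $\nabla_Y V_a<\nabla_Y V_b$, which yields \eqref{eq:Tstar}.

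Finally, I would check the sign convention: $Y$ is shortage and $V$ is a cost. Sending power reduces $Y_b$ and raises $Y_a$, so sending is worthwhile exactly when $b$'s marginal shortage cost is the larger one. This agrees with the indicator in \eqref{eq:Tstar}. Measurability and the a.e.\ irrelevance of tie-breaking on the switching surface are inherited from Theorem~\ref{thm:nash_feedback} and Proposition~\ref{prop:single_agent}(c).
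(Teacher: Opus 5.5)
You follow the paper's route: specialize the bracket of \eqref{eq:hjb} and read off cases (i) and (ii) of Theorem~\ref{thm:nash_feedback}. The generation block is correct. The transfer block, however, does not deliver \eqref{eq:Tstar}. From \eqref{eq:Ydyn_pow} you correctly find that $T_{ab}$ enters $f_a$ with coefficient $-1$ and $f_b$ with $+1$. After your identification its bracket coefficient is therefore $k_{ab}=\nabla_Y V_b-\nabla_Y V_a$. You also correctly state that the upper vertex $\overline T_{ab}$ is selected when $k_{ab}<0$. But $k_{ab}<0$ means $\nabla_Y V_a>\nabla_Y V_b$. Your sentence ``full outflow is optimal exactly when $\nabla_Y V_a<\nabla_Y V_b$'' describes the case $k_{ab}>0$, where the minimizer is $T_{ab}=0$. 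Taken at face value, your two steps give $T^*_{ab}=\overline T_{ab}\,\mathbf{1}\{\nabla_Y V_a>\nabla_Y V_b\}$, the reverse of the claim. Your closing sign check hides this by switching conventions. It asserts that sending power raises $Y_a$ and lowers $Y_b$. Under \eqref{eq:Ydyn_pow} as printed, and as you used it in your first step, the outflow lowers $Y_a$ and raises $Y_b$.

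The root cause is in the model. The flow terms of \eqref{eq:Ydyn_pow} carry the sign opposite to the shortage interpretation, since an outflow should add to the sender's shortage. Under the physically consistent convention ($+T_{ab}$ in $f_a$, $-T_{ab}$ in $f_b$), the coefficient is $\nabla_Y V_a-\nabla_Y V_b$ and \eqref{eq:Tstar} follows. This convention also matches Proposition~\ref{prop:special}(iii) and the routing used in Section~\ref{sec:hjbsolve}. With the printed dynamics, the indicator reverses. The paper's one-line proof computes the same coefficient $\nabla_Y V_b-\nabla_Y V_a$ and never says which sign selects the upper vertex, so it skirts the issue rather than resolving it. A complete proof must fix one convention explicitly, flagging the sign in \eqref{eq:Ydyn_pow}, and derive the indicator from it.

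Separately, your reconciliation of $\nabla_{Y_b}V_a$ with $\nabla_Y V_b$ addresses a real slip in the paper's proof. However, the step from ``each agent's value equals the social value up to terms independent of its own control'' to ``the coefficient is $\partial_{Y_b}S-\partial_{Y_a}S$'' does not hold as stated. Such terms may still depend on the state, for example the baselines $V_b^{\mathrm{NoCoupling}}(t,\hat X,Y_b)$ of \eqref{eq:Mdef}. Invoke Theorem~\ref{thm:collapse} directly instead: the equilibrium is the planner's minimizer, so read both blocks off the planner's Hamiltonian. That means using $\partial_{Y_a}S$ in place of $\nabla_Y V_a$ consistently, in \eqref{eq:Pstar} as well as in \eqref{eq:Tstar}.
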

\begin{proof}
$P_{a,g}$ enters $c_a$ through the strictly convex $C_{a,g}$, so it is case (i) of Theorem~\ref{thm:nash_feedback} and the stationary point $(\dot C_{a,g})^{-1}(\nabla_YV_a)$ is projected onto $[0,P_{a,g,\max}]$. The outflow $T_{ab}$ carries no own running cost and enters the bracket of \eqref{eq:hjb} linearly through $-\nabla_YV_a\,T_{ab}+\nabla_YV_b\,T_{ab}$, so it is case (ii), with the marginal-value difference $\nabla_YV_b-\nabla_YV_a$ determining the vertex.
\end{proof}

\paragraph{Transfers.} The marginal contribution \eqref{eq:Mdef} is the cost reduction Area $a$'s interties confer on the rest of the system, and the Groves payment \eqref{eq:groves} charges the chief beneficiary and compensates the exporters. Section~\ref{sec:numerics} estimates these payments on the 2021 topology.

\begin{remark}[Dispatch, unit commitment, and modeling scope]\label{rem:opfuc}
The framework operates one layer above intra-area dispatch. Economic dispatch is \emph{embedded}: the strictly convex cost $\sum_g C_{a,g}(P_{a,g})$ is the area's merit-order curve, and the bang-bang inter-area exchange---route power toward the higher shadow price $\partial_Y W$ until the tie binds---is exactly the cross-interface dispatch, read off the value gradients rather than re-solved as a linear program. The network is reduced to scalar ties (loop flow and full DC/AC OPF abstracted away), defensible for ERCOT's scalar-controllable DC ties; a DC-OPF refinement is compatible, inserting a convex quadratic program that turns transfers into OPF flows and the bang-bang rule into a KKT condition while preserving convexity, the efficiency collapse, and semiconcavity. Unit commitment is exogenized as the availability envelope $\Pmax_a(t)$, whose Uri collapse and recovery is the slow commitment state---a boundary condition, not an integer decision. Commitment binaries are excluded deliberately: they would break the convexity on which the viscosity verification, the efficiency collapse (unique social optimum), and the Groves prices (zero duality gap) all rest, making a commitment-aware model a genuinely different (mixed-integer / impulse-control) object. Thus $(C_a,\Pmax_a(t))$ is the pre-solved inner loop; the contribution is the outer loop of coordination, disclosure, and incentive-compatible transfers.
\end{remark}

\subsection{Structural Special Cases}\label{sec:special}
To expose the mechanics transparently, take a scalar latent state and a scalar public signal, $n=m=1$ in \eqref{eq:Xdyn} and \eqref{eq:obs}; two symmetric areas $\mathcal{A}=\{1,2\}$ joined by a single bidirectional tie of capacity $\bar T$; identical quadratic generation cost $C(P)=\tfrac{\varkappa}{2}P^2$; common VOLL $\Gamma$; and time-invariant coefficients, so that $A_t\equiv A$, $\rho_t\equiv\rho$, $\Sigma_t\equiv\Sigma$ and $R$ are scalars.

\begin{proposition}[Closed-form reduction]\label{prop:special}
In the scalar symmetric case:
\begin{enumerate}
\item[(i)] The steady-state belief variance is the stabilizing root of the algebraic Riccati equation $2A\Pi + \Sigma^2 - \tfrac{\rho^2}{R}\Pi^2 + \lambda\Sigma_J = 0$, namely
\begin{equation}\label{eq:Piinf}
\Pi_\infty = \frac{R}{\rho^2}\Big( A + \sqrt{A^2 + \tfrac{\rho^2}{R}(\Sigma^2+\lambda\Sigma_J)} \Big),
\end{equation}
strictly decreasing in the disclosure intensity $\rho^2/R$: more transparency lowers belief variance.
\item[(ii)] Generation is the soft projection $P^*_a = \min\!\big(P_{\max}(X),\,\max(0,\nabla_Y V_a / \varkappa)\big)$.
\item[(iii)] The tie flows from the area with the lower marginal shortage value to the higher one at full capacity $\bar T$ whenever $\nabla_Y V_1 \neq \nabla_Y V_2$, and is zero on the symmetric manifold $\{Y_1=Y_2\}$, which is exactly the switching surface.
\end{enumerate}
\end{proposition}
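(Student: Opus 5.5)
Part (i) is a scalar computation on the Riccati equation of Lemma~\ref{lem:proj_filter}. In the scalar, time-invariant case with constant $\rho$, the predictive form of \eqref{eq:riccati} is
\[
\dot\Pi = 2A\Pi + \Sigma^2 - \tfrac{\rho^2}{R}\Pi^2 + \lambda\Sigma_J .
\]
The steady state is a root of the quadratic $\tfrac{\rho^2}{R}\Pi^2 - 2A\Pi - (\Sigma^2+\lambda\Sigma_J)=0$. The product of its roots is $-(\Sigma^2+\lambda\Sigma_J)R/\rho^2<0$, so exactly one root is positive, namely \eqref{eq:Piinf}.

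That root is the stabilizing one. The linearized drift at $\Pi_\infty$ is $2A-2\tfrac{\rho^2}{R}\Pi_\infty = -2\sqrt{A^2+\tfrac{\rho^2}{R}(\Sigma^2+\lambda\Sigma_J)}<0$, so it is globally attracting on $(0,\infty)$ for the scalar logistic-type ODE.

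For monotonicity, set $s=\rho^2/R$ and $c=\Sigma^2+\lambda\Sigma_J$. Rationalizing gives
\[
\Pi_\infty = \frac{c}{\sqrt{A^2+sc}-A}.
\]
The denominator is strictly increasing in $s$, so $\Pi_\infty$ is strictly decreasing in $s$. I will note explicitly that under Assumption~\ref{ass:observed_jumps} the exact flow uses resets rather than the rate $\lambda\Sigma_J$. Statement (i) should therefore be read as the steady state of the mean (predictive) covariance rate, consistent with Remark~\ref{rem:infinite}.

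Parts (ii) and (iii) specialize Corollary~\ref{cor:power_feedback}.

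For (ii), take $C(P)=\tfrac{\varkappa}{2}P^2$, so $(\dot C)^{-1}(q)=q/\varkappa$. Substituting into \eqref{eq:Pstar} gives the projection onto $[0,P_{\max}]$. The order of $\min$ and $\max$ is immaterial because $P_{\max}\ge0$.

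For (iii), \eqref{eq:Tstar} with a single bidirectional tie gives flow $\bar T$ from the area with smaller $\nabla_YV$ to the area with larger $\nabla_YV$. On a tie between the two gradients, the tie-breaking selection is chosen as zero.

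The real content is identifying the switching surface $\{\nabla_YV_1=\nabla_YV_2\}$ with $\{Y_1=Y_2\}$. I would argue this in three steps.

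\textbf{Symmetry.} Swapping the labels of the two areas leaves the data invariant. Uniqueness of the viscosity solution (Theorem~\ref{thm:comparison}, applied to the collapsed planner problem) then gives $S(t,\hat X,Y_1,Y_2)=S(t,\hat X,Y_2,Y_1)$. Hence $\partial_{Y_1}S=\partial_{Y_2}S$ on the diagonal, at points of differentiability.

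\textbf{Strict ordering off the diagonal.} I would show that $Y_1>Y_2$ implies $\partial_{Y_1}S>\partial_{Y_2}S$. The plan is to use convexity of $S$ in $Y$: the dynamics are affine and the cost $\tfrac{\Gamma}{2}Y^2$ is convex. Convexity together with exchange symmetry gives $(\partial_{Y_1}S-\partial_{Y_2}S)(Y_1-Y_2)\ge0$. Strictness then follows from the strictly convex shortage penalty, whose gradient is $\Gamma Y_a$. With the collapse of Theorem~\ref{thm:collapse}, the agents' $\nabla_YV_a$ are the planner's $\partial_{Y_a}S$.

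\textbf{Main obstacle.} The hardest step is this strict monotonicity. Convexity alone yields only the weak inequality. To get strictness I would use a direct coupling argument: compare the optimal cost from $(Y_1,Y_2)$ with the cost from the point obtained by moving mass $\delta$ from $Y_1$ to $Y_2$, applying the same controls. This produces a cost difference of order $\Gamma(Y_1-Y_2)\delta\,(T-t)$, which is strictly positive for $t<T$. A second concern is that $S$ may fail to be differentiable exactly on the diagonal. That does not affect the statement, because the diagonal is the switching set, where the feedback is defined only up to the null-set convention of Remark~\ref{rem:nullset}.
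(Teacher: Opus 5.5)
For (i) and (ii) your argument matches the paper's proof, which is a one-line specialization. Your extra checks (root sign, stability, monotonicity in $\rho^2/R$, the $\min$/$\max$ order) are correct. So is your caveat that the $\lambda\Sigma_J$ steady state belongs to the predictive rate rather than the reset dynamics.

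For (iii), the paper derives the ``iff'' from exchange symmetry alone, which only shows that the diagonal lies in the switching set. You are right that the converse is the substance. Your weak inequality from convexity plus symmetry is sound; it relies on $\rho$ being frozen here, so $\hat X$ does not depend on $u$, the control set does not depend on $Y$, and the $Y$-dynamics are affine.

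The gap is the strictness step. Running the optimal controls from $Y$ at $Y^\delta=(Y_1-\delta,Y_2+\delta)$ shifts the whole trajectory by the constant vector $\delta(-1,1)$. What you actually obtain is
\[
S(Y)-S(Y^\delta)\;\ge\;\Gamma\delta\,\E\int_t^T\big(Y^\star_{1,r}-Y^\star_{2,r}\big)\,\diff r-\Gamma\delta^2(T-t),
\]
not a difference of order $\Gamma(Y_1-Y_2)\delta(T-t)$. You have frozen the gap at its initial value, whereas the optimal tie flow and differential generation act precisely to close it. Nothing in the proposal prevents the optimal path from reaching or crossing the diagonal, which would make the integral zero or negative. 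Showing that it does not overshoot the switching surface is essentially the ordering you are trying to prove.

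The repair is to use the strictly convex penalty globally: it makes $S(t,\hat X,\cdot)$ strictly convex for $t<T$.
\begin{itemize}
\item Take $Y\neq Y'$ and $\varepsilon$-optimal controls $u,u'$ on the same $\hat X$ path, and run $\theta u+(1-\theta)u'$ from $\theta Y+(1-\theta)Y'$.
\item On top of your convexity inequality, the penalty contributes $-\tfrac{\Gamma}{2}\theta(1-\theta)\E\int_t^T|Y_r-Y'_r|^2\,\diff r$.
\item The demand terms cancel in $Y_r-Y'_r$, so its velocity is bounded by a constant $C_U$ fixed by the control bounds. Hence $|Y_r-Y'_r|\ge|Y-Y'|/2$ on an interval of length $\min\{T-t,\,|Y-Y'|/(2C_U)\}$, a bound independent of $\varepsilon$ and of the controls.
\item Letting $\varepsilon\downarrow0$ gives strict convexity, hence a strictly monotone subdifferential. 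Your symmetry computation then becomes $2(p_1-p_2)(Y_1-Y_2)>0$ for every subgradient $p$ at any point with $Y_1\neq Y_2$.
\end{itemize}
An equivalent route: $h(s)=S(m+s,m-s)$ is convex and even. Averaging an $\varepsilon$-optimal control from $(m+s,m-s)$ with its label swap gives $h(s)>h(0)$, and convexity then yields $h'_-(s)\ge (h(s)-h(0))/s>0$ for $s>0$. Either way, no analysis of trajectories near the diagonal is needed.

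A minor caution when you specialize \eqref{eq:Tstar}. With \eqref{eq:Ydyn_pow} as printed, outflow lowers $Y_a$, so the planner's coefficient on $T_{ab}$ is $\partial_{Y_b}S-\partial_{Y_a}S$. Minimization then routes flow toward the \emph{lower} marginal value, the reverse of \eqref{eq:Tstar}. The direction asserted in (iii) requires outflow to raise the exporter's shortage. This affects only the direction of flow, not the identification of the switching set.
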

\begin{proof}
(i) is the scalar specialization of \eqref{eq:riccati} with $\dot\Pi=0$, taking the positive (stabilizing) root. (ii)--(iii) specialize Theorem~\ref{thm:nash_feedback} with $(\dot C)^{-1}(y)=y/\varkappa$ and symmetric value functions $V_1(\cdot,Y_1,Y_2)=V_2(\cdot,Y_2,Y_1)$, so $\nabla_Y V_1=\nabla_Y V_2$ iff $Y_1=Y_2$.
\end{proof}

A second structural consequence, illustrated in Experiment~2 of Section~\ref{sec:numerics}, concerns the interaction of disclosure and coupling under \emph{systemic} risk. Consider $N$ renewable-heavy areas whose net-load deviations follow a one-factor structure $N_a=b_a F+\zeta_a$ with a common factor $F$ and independent idiosyncratic terms $\zeta_a$; disclosure reveals $F$ and lowers the common-mode belief variance $v_F(\rho)=P_{FF}(\rho)$, while each area carries precautionary reserve at unit cost $g(z^\star)$.

\begin{proposition}[Information--coupling complementarity under common risk]\label{prop:compl}
Suppose net loads admit the one-factor structure $N_a=b_a F+\zeta_a$ with loadings $b_a\ge 0$ and mutually independent idiosyncratic terms, and that disclosure reduces only the common-mode belief variance $v_F(\rho):=P_{FF}(\rho)$, strictly decreasing in $\rho$, while the idiosyncratic forecast variances $\sigma_{\zeta,a}^2$ are fixed. Write $\sigma_a^2 = v_F b_a^2 + \sigma_{\zeta,a}^2$ and $\sigma_\Sigma^2 = v_F\big(\sum_a b_a\big)^2 + \sum_a \sigma_{\zeta,a}^2$, and let the pooling benefit be $B(\rho) = g(z^\star)\big(\sum_a \sigma_a(\rho) - \sigma_\Sigma(\rho)\big)\ge 0$. Then
\begin{enumerate}
\item[(i)] $\displaystyle\lim_{v_F\to\infty} B = 0$: perfectly correlated common risk is non-diversifiable;
\item[(ii)] $\displaystyle\lim_{v_F\to 0} B = g(z^\star)\Big(\sum_a \sigma_{\zeta,a} - \sqrt{\textstyle\sum_a \sigma_{\zeta,a}^2}\Big) > 0$ (strict unless all but one $\sigma_{\zeta,a}$ vanish);
\item[(iii)] with $\sigma_a=\sqrt{v_F b_a^2+\sigma_{\zeta,a}^2}$ and $\sigma_\Sigma=\sqrt{v_F(\sum_a b_a)^2+\sum_a\sigma_{\zeta,a}^2}$, the pooling benefit is locally increasing in disclosure ($\partial B/\partial v_F<0$) if and only if
\begin{equation}\label{eq:monocond}
\sum_a \frac{b_a^2}{\sigma_a} \;<\; \frac{\big(\sum_a b_a\big)^2}{\sigma_\Sigma}.
\end{equation}
\end{enumerate}
By (i)--(ii) the value of market coupling is strictly larger under full disclosure ($v_F\to0$) than under no disclosure ($v_F\to\infty$); the value of disclosure is correspondingly larger under coupling than under autarky, by exactly $B(v_F^{\mathrm{lo}})-B(v_F^{\mathrm{hi}})>0$ for $v_F^{\mathrm{lo}}<v_F^{\mathrm{hi}}$. Thus information design and market coupling are complements. 
\end{proposition}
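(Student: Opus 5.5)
The statement is pure calculus on the explicit function $B(v_F)=g(z^\star)\big(\sum_a\sigma_a-\sigma_\Sigma\big)$, with $\sigma_a=\sqrt{v_Fb_a^2+\sigma_{\zeta,a}^2}$ and $\sigma_\Sigma=\sqrt{v_F(\sum_a b_a)^2+\sum_a\sigma_{\zeta,a}^2}$. The dependence on $\rho$ enters only through the strictly decreasing map $\rho\mapsto v_F(\rho)$. So I would work throughout in the variable $v_F\in(0,\infty)$ and translate back at the end by the chain rule, $\partial B/\partial\rho=(\partial B/\partial v_F)\,v_F'(\rho)$ with $v_F'<0$. Nonnegativity of $B$ is the triangle inequality in $\mathbb R^2$: $\sigma_a=\|(\sqrt{v_F}b_a,\sigma_{\zeta,a})\|$, and summing the vectors gives $\|(\sqrt{v_F}\sum_a b_a,\sum_a\sigma_{\zeta,a})\|\le\sum_a\sigma_a$. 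Since $\sum_a\sigma_{\zeta,a}\ge\sqrt{\sum_a\sigma_{\zeta,a}^2}$, this norm dominates $\sigma_\Sigma$.

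\emph{(ii)} follows by continuity at $v_F=0$. There $\sigma_a\to\sigma_{\zeta,a}$ and $\sigma_\Sigma\to\sqrt{\sum_a\sigma_{\zeta,a}^2}$. Strictness is the strict $\ell^1$ versus $\ell^2$ inequality $\big(\sum_a\sigma_{\zeta,a}\big)^2>\sum_a\sigma_{\zeta,a}^2$, which holds as soon as two of the terms are nonzero.

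\emph{(i)} is the delicate limit, and I expect it to be the main obstacle, because $B$ is a difference of two quantities of order $\sqrt{v_F}$. Expanding $\sqrt{v_Fb^2+s^2}=\sqrt{v_F}\,b+s^2/(2\sqrt{v_F}\,b)+O(v_F^{-3/2})$ for $b>0$, the leading terms cancel exactly, since $\sum_a b_a=\sum_a b_a$, and the remainders are $O(v_F^{-1/2})\to0$. However, areas with $b_a=0$ contribute the constant $\sigma_{\zeta,a}$, which does not vanish. I would therefore either prove (i) under the implicit hypothesis $b_a>0$ for all $a$, stating it explicitly, or record that the limit equals
\[
g(z^\star)\Big(\sum_{a:b_a=0}\sigma_{\zeta,a}\Big)\ \ge 0 .
\]
In the write-up I would first state the rescaled version: $B/\sqrt{v_F}\to0$ always, which is the precise sense of ``non-diversifiable''. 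I would then give the exact limit $0$ when all loadings are positive, by rationalizing $\sqrt{v_Fb^2+s^2}-\sqrt{v_F}\,b=s^2/(\sqrt{v_Fb^2+s^2}+\sqrt{v_F}\,b)$.

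\emph{(iii)} is a direct differentiation. We have $\partial\sigma_a/\partial v_F=b_a^2/(2\sigma_a)$ and $\partial\sigma_\Sigma/\partial v_F=(\sum_a b_a)^2/(2\sigma_\Sigma)$, so
\[
\frac{\partial B}{\partial v_F}=\frac{g(z^\star)}{2}\Big(\sum_a\frac{b_a^2}{\sigma_a}-\frac{(\sum_a b_a)^2}{\sigma_\Sigma}\Big).
\]
With $g(z^\star)>0$, this is negative if and only if \eqref{eq:monocond} holds. Combined with $v_F'(\rho)<0$, the condition is equivalent to $B$ being locally increasing in $\rho$.

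For the concluding complementarity claim, I would define the value of coupling as the autarky cost minus the pooled cost, which equals $B$. The value of disclosure between two disclosure levels is then the autarky disclosure gain plus the increment $B(v_F^{\mathrm{lo}})-B(v_F^{\mathrm{hi}})$. The increasing-differences property therefore reduces exactly to that increment being positive. Comparing the endpoints from (i) and (ii) gives positivity between no disclosure and full disclosure. For intermediate pairs $v_F^{\mathrm{lo}}<v_F^{\mathrm{hi}}$, positivity requires \eqref{eq:monocond} to hold on $[v_F^{\mathrm{lo}},v_F^{\mathrm{hi}}]$, and I would state that hypothesis rather than claim the increment is positive for every pair. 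I would try to verify \eqref{eq:monocond} globally in the symmetric case $b_a\equiv b$, $\sigma_{\zeta,a}\equiv s$, where it reduces to $Nb^2/\sigma_1<N^2b^2/\sigma_\Sigma$, that is $\sigma_\Sigma<N\sigma_1$. This holds whenever $s>0$ and $N>1$. In general, I would present monotonicity only under \eqref{eq:monocond}.
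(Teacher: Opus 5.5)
Your proof is correct and uses the same ingredients as the paper's: limits of the explicit square roots for (i)--(ii), direct differentiation for (iii), and the identity $\mathrm{Cost}_{\mathrm{unc}}-\mathrm{Cost}_{\mathrm{cpl}}=B$ for the complementarity claim. Where you depart from the paper, you are right and the paper overreaches.

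In (i), the paper passes from $\sigma_a\sim\sqrt{v_F}\,b_a$ and $\sigma_\Sigma\sim\sqrt{v_F}\sum_a b_a$ directly to $\sum_a\sigma_a-\sigma_\Sigma\to0$. First-order equivalence only gives $\sum_a\sigma_a-\sigma_\Sigma=o(\sqrt{v_F})$, which is your rescaled statement; the exact limit needs your rationalization. Under the stated hypothesis $b_a\ge0$, the limit is in fact false whenever some $b_a=0$ with $\sigma_{\zeta,a}>0$. One small correction to your own version: the residual $g(z^\star)\sum_{a:b_a=0}\sigma_{\zeta,a}$ presumes $\sum_a b_a>0$. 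If every loading vanishes, $B$ is constant and equal to the value in (ii).

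In the closing claim, the paper asserts $B(v_F^{\mathrm{lo}})-B(v_F^{\mathrm{hi}})>0$ for every pair $v_F^{\mathrm{lo}}<v_F^{\mathrm{hi}}$ ``by (i)--(ii) alone''. That is false, since (i)--(ii) only compare the two limiting regimes. Take $(b_1,b_2)=(1,0.1)$ and $(\sigma_{\zeta,1},\sigma_{\zeta,2})=(0.1,1)$: then $B\to0.095\,g(z^\star)$ as $v_F\to0$, but $B\approx0.52\,g(z^\star)$ at $v_F=1$. This is exactly the non-monotone case the paper concedes after its proof.

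Your version is the correct statement: the endpoint comparison holds unconditionally, and intermediate pairs are covered under \eqref{eq:monocond} on $[v_F^{\mathrm{lo}},v_F^{\mathrm{hi}}]$. Note that \eqref{eq:monocond} on the interval is sufficient, not necessary, so ``guaranteed by'' is more accurate than ``requires''. Your triangle-inequality proof of $B\ge0$, which the paper only asserts, is a welcome addition.
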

\begin{proof}
Substitute the factor decomposition. As $v_F\to\infty$, $\sigma_a\sim\sqrt{v_F}\,b_a$ and $\sigma_\Sigma\sim\sqrt{v_F}\sum_a b_a$ (using $b_a\ge0$), so $\sum_a\sigma_a-\sigma_\Sigma\to0$, giving (i). As $v_F\to0$, $\sigma_a\to\sigma_{\zeta,a}$ and $\sigma_\Sigma\to\sqrt{\sum_a\sigma_{\zeta,a}^2}$, and $\sum_a\sigma_{\zeta,a}\ge\sqrt{\sum_a\sigma_{\zeta,a}^2}$ (with equality only in the degenerate case) gives (ii). For (iii), differentiate: $\partial_{v_F}\sigma_a=b_a^2/(2\sigma_a)$ and $\partial_{v_F}\sigma_\Sigma=(\sum_a b_a)^2/(2\sigma_\Sigma)$, hence $\partial_{v_F}B=g(z^\star)\big(\sum_a b_a^2/(2\sigma_a)-(\sum_a b_a)^2/(2\sigma_\Sigma)\big)$, which is negative iff \eqref{eq:monocond} holds. The cross-regime comparative static follows from $\mathrm{Cost}_{\text{unc}}-\mathrm{Cost}_{\text{cpl}}=B$: the difference of disclosure values between the coupled and autarkic regimes equals $B(v_F^{\mathrm{lo}})-B(v_F^{\mathrm{hi}})$, which is positive by (i)--(ii) alone and requires no monotonicity in between.
\end{proof}

Condition~\eqref{eq:monocond} holds throughout the calibrated sweep of Section~\ref{sub:exp2} (Figure~\ref{fig:exp2_cost}, where the shaded gap widens), so the complementarity is monotone there; it can fail when the loadings are highly concentrated (a single $b_a$ dominating) or the idiosyncratic scales are extremely heterogeneous---precisely when the ``common'' factor is effectively idiosyncratic---in which case $B$ is non-monotone in $v_F$ while remaining nonnegative.

\section{Numerical Illustration of the Theoretical Results}\label{sec:numerics}
We report two complementary experiments. Experiment~1 (Section~\ref{sec:hjbsolve}) solves the master HJB equation under a \emph{real-data calibration}---FERC/NERC event reports, ERCOT's published DC-tie capacities, and the realized February 2021 Winter Storm Uri trajectory---stressing the \emph{physical} and \emph{transfer} channels with the marginal-contribution mechanism active; a sensitivity analysis sweeps the decisive structural parameter, the ERCOT--SPP tie capacity, from the actual near-islanded $0.82$~GW to the FERC/DOE-recommended range. Experiment~2 (Section~\ref{sub:exp2}) isolates the \emph{information} channel: calibrated to documented European renewable-drought statistics, physical caps are removed so that the binding constraint is the economic cost of carrying precautionary reserve sized to forecast uncertainty, allowing the welfare value of disclosure to be measured cleanly.

\subsection{Methodology}\label{sub:method}
Experiment~1 solves the master HJB equation on a $240\times240$ grid over the reserve states and evaluates the resulting feedback by forward simulation along the realized Uri demand and outage trajectory. \emph{Both} levers are active. The storm severity $X_t$ is latent, mean-reverting to the realized Uri profile, and is not observed by the operators; they learn it only through the public advisory \eqref{eq:obs}, whose gain $\rho$ is the leader's disclosure control. Storm onset is a publicly observed disruption epoch, so the belief variance $\Pi_t$ follows the Riccati flow \eqref{eq:riccati} with the reset of Lemma~\ref{lem:proj_filter} at that epoch (Assumption~\ref{ass:observed_jumps}). Because area demand is affine in severity, $D_a = D_{a,0} + \Delta D_a X_t$, the \emph{unresolved} forecast error enters the reserve balance \eqref{eq:Ydyn} as an additional diffusion of variance $\Delta D_a^2\,\Pi_t$: writing $\sigma_a(t)^2 = \sigma_0^2 + \Delta D_a^2\,\Pi_t(\rho)$, disclosure acts on welfare by shrinking $\Pi_t$ and hence the balancing risk the operators must carry. This is the exact channel of Section~\ref{sec:master}, and it is what makes $\rho$ a control input rather than a parameter. Marginal contributions $M_a$ are estimated by a leave-one-out (empirical Groves) construction under common random numbers, evaluated at the net-optimal disclosure level. Experiment~2 removes physical caps; generation is elastic and the operator schedules a precautionary reserve $r = z^\star \sigma$, where $\sigma$ is the net-load forecast standard deviation obtained from the steady-state belief covariance and $z^\star$ is the cost-optimal reliability quantile. Its cost curves are computed analytically from the steady-state Riccati and the Gaussian reserve-loss function, so they are exact rather than Monte-Carlo estimates; where sample trajectories are shown, the latent factor structure is simulated by Euler--Maruyama with beliefs from the exact filter of Lemma~\ref{lem:proj_filter} at the steady Kalman--Bucy gain.

\subsection{Experiment 1: HJB Solution under a Real-Data (Winter Storm Uri) Calibration}\label{sec:hjbsolve}
We solve the master HJB equation numerically and calibrate the three-area
system to the February 2021 Winter Storm Uri event (ERCOT, SPP, MISO). The solve
replaces the first-order certainty-equivalent closure used in the analytic
sections with a genuine value-function solution, and it numerically verifies the
structural predictions of the viscosity section (semiconcavity; a measure-zero
bang-bang switching set).

\subsubsection{Calibration to Winter Storm Uri}\label{app:numcalib}
Parameters are grounded in the FERC/NERC joint report, ERCOT and SPP/MISO winter
event reports, and ERCOT's published DC-tie capacities. The decisive structural
fact is that ERCOT is electrically near-islanded: its total external transfer
capability is about $1.1$ GW (two DC ties to SPP totalling $0.82$ GW plus
$\sim\!0.4$ GW to Mexico), i.e.\ under $2\%$ of its $\sim\!70$ GW winter peak. The calibration parameters and their sources are collected in Table \ref{table_uri_cal}.
\begin{table*}[ht]\label{table_uri_cal}\centering
\small
\begin{tabular}{lcccc}
\toprule
Quantity & ERCOT & SPP & MISO & Source \\
\midrule
pre-storm demand (GW)        & 55 & 32 & 75  & ERCOT/SPP archives \\
storm-peak demand (GW)       & 77 & 46 & 100 & 76.8 GW projected ERCOT peak \\
available capacity (GW)      & 83 & 55 & 130 & SARA / winter capacity \\
capacity forced offline (GW) & 34 & 12.5 & 8 & $>$30\% ERCOT; gas $27$ GW out \\
VOLL proxy $\Gamma$ (\$k/MWh) & 9 & 4 & 3.5 & \$9000 cap; SPP RT \$4029 \\
\midrule
\multicolumn{5}{l}{Ties: ERCOT--SPP $=0.82$ GW (DC); SPP--MISO $\approx 10$ GW (synchronous); ERCOT--MISO $=0$.}\\
\bottomrule
\end{tabular}

\caption{Uri calibration. Storm onset day 3, trough days 5--7, recovery by day 9.5, horizon 12 days.}\label{tab:uri}
\end{table*}

The disclosure layer requires a calibration of the \emph{uncertainty} about severity, which is not a parameter of the physical event but of what was known about it. It can be read directly off ERCOT's own planning miss. The extreme-weather scenario in ERCOT's Seasonal Assessment of Resource Adequacy put the winter peak at $67.2$~GW, while the realized peak demand absent load shedding was $76.8$~GW~\cite{USHouse2021}: a $9.6$~GW error against the operator's own worst case. Since demand is affine in severity with $\Delta D_E = 22$~GW (Table~\ref{tab:uri}), the extreme scenario corresponds to a severity of $0.56$ and the realization to $1.0$, so the prior severity error was $0.44$ and the prior variance is $\Pi_0 = 0.44^2 = 0.191$. We take the severity to be an Ornstein--Uhlenbeck process reverting to the realized Uri profile, with process volatility chosen so that its stationary variance equals $\Pi_0$. The remaining belief-channel parameters are collected in Table~\ref{tab:uri_info}; the disclosure price $\Lambda$ is set so that the optimum is interior, as the reserve-holding cost is in Experiment~2.

\begin{table}[h]\centering\small
\caption{Experiment 1: severity-uncertainty and disclosure calibration. $\Pi_0$ is fixed by ERCOT's $9.6$~GW miss against its own extreme-weather scenario.}\label{tab:uri_info}
\resizebox{0.7\textwidth}{!}{%
\begin{tabular}{lcl}
\toprule
Parameter & Value & Meaning \\
\midrule
$\Pi_0$       & $0.191$ & prior severity variance (from the $9.6$ GW miss) \\
$\sigma_X$    & $0.437$ & severity volatility (stationary variance $=\Pi_0$) \\
$\theta$      & $0.50$  & severity mean reversion \\
$R$           & $1.0$   & advisory observation noise \\
$\Sigma_J$    & $0.05$  & covariance reset at storm onset \\
$\sigma_0$    & $2.0$   & residual balancing noise \\
$\Lambda$     & $40$    & disclosure price \\
\bottomrule
\end{tabular}%
}
\end{table}

\begin{figure}\centering
    \includegraphics[width=\linewidth]{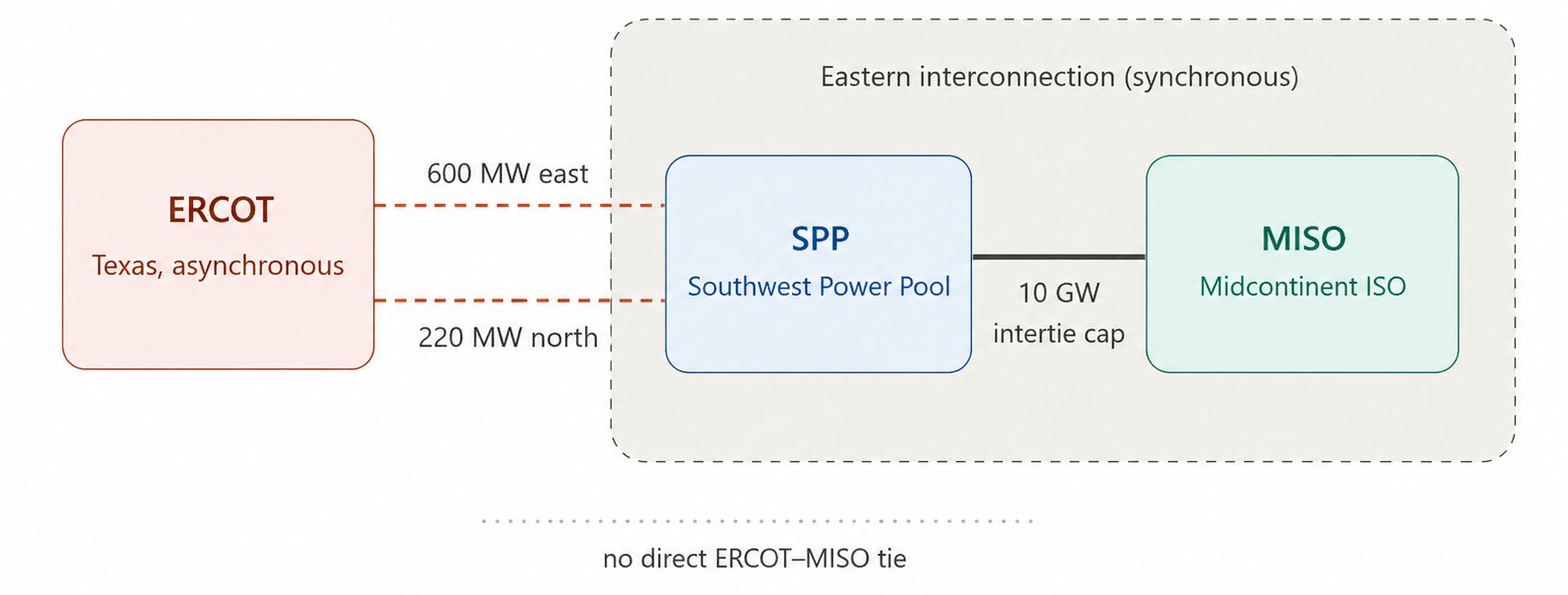}
    \caption{Interconnection topology of the calibrated three-area system. ERCOT is asynchronous: its only links to the Eastern interconnection are two back-to-back HVDC ties, $220$~MW at Oklaunion and $600$~MW at Monticello--Welsh, both terminating in SPP. SPP and MISO are synchronously connected, with transfer capability limited by congestion rather than by a converter rating. There is no direct ERCOT--MISO tie, so mutual aid from MISO reaches ERCOT only through SPP}
    \label{fig:topology}
\end{figure}

The topology is shown in Figure~\ref{fig:topology}. We reduce to the ERCOT--SPP core (the two DC-tied, capacity-constrained areas)
and fold MISO in as an external reserve feeding SPP through the large synchronous
interface, available up to $\min(10\,\text{GW},\,\text{MISO spare})$ at an import
price $p_M=1.5$. This keeps the value problem two-dimensional and the switching
set visualizable while preserving the mutual-aid chain MISO$\to$SPP$\to$ERCOT.

\subsubsection{Solving the HJB equation}

By the efficiency-collapse theorem, the Groves-aligned lower-level Nash
equilibrium coincides with the social optimum, so we solve a \emph{single}
social-planner value $W(t,Y)$ over reserve states $Y=(Y_E,Y_S)$ rather than $N$
coupled best-response functions. With the optimal generation and bang-bang
transfer substituted, the exact optimized Hamiltonian is
\begin{align}
\bar H(Y,p,t)&=\tfrac12\Gamma_E Y_E^2+\tfrac12\Gamma_S Y_S^2
\nonumber\\&\quad+p_E D_E(t)+p_S D_S(t)
\nonumber\\&\quad+G(p_E)+G(p_S)-\tau_{ES}\,|p_E-p_S|
\nonumber\\&\quad-\big(p_S-p_M\big)^+ I_{\max}(t),
\end{align}

where $G(p)=\min_{0\le P\le \Pmax}\{\tfrac12 cP^2-pP\}$ is the saturated-generation
term and $-\tau_{ES}|p_E-p_S|$ is the bang-bang inter-area exchange. We integrate
$\partial_\tau W=\bar H$ backward from $W(T,\cdot)=0$ with a monotone
Lax--Friedrichs scheme,
\begin{align*}
W^{k+1}&=W^k+\Delta\tau\Big(\bar H(Y,p^0)+\tfrac{\rho_E}{2}(p_E^+-p_E^-)\\
&\quad+\tfrac{\rho_S}{2}(p_S^+-p_S^-)\Big),\quad
\rho_a\ge\max|\partial_{p_a}\bar H|,
\end{align*}
on a $240\times240$ grid over $[-15,200]$ GW$\cdot$day, with a constant-gradient
outflow boundary condition. The scheme is monotone, consistent and stable under
$\Delta\tau\,(\rho_E+\rho_S)\le \Delta y$, hence converges to the unique
viscosity solution (Barles--Souganidis~\cite{BarlesSouganidis}), consistent with the viscosity-verification
section. Generation feedback is $P_a^\star=\mathrm{clip}(\partial_{Y_a}W/c_a,0,\Pmax_a)$
and the transfer routes full tie capacity toward the higher marginal value.

\subsubsection{Results}

\paragraph{The disclosure lever.}
Sweeping the disclosure gain over $\rho\in[0.2,4.0]$ drives the belief variance at the storm trough from $\Pi=0.192$ to $\Pi=0.082$, and social cost falls monotonically: by $8.7\%$ under market coupling ($150{,}439\to137{,}350$) and by $7.8\%$ under autarky ($161{,}170\to148{,}616$). Net of the disclosure price the optimum is interior at $\rho^\star=3.0$ in both regimes (Fig.~\ref{fig:uri_disclosure}). Two things follow. First, the pooling benefit \emph{widens} with disclosure, from $10{,}731$ to $11{,}266$, so the two levers are complements on the Uri topology exactly as Proposition~\ref{prop:compl} predicts under a common factor. Second, the disclosure gain is an order of magnitude smaller here than in the common-risk experiment of Section~\ref{sub:exp2}, where the same lever is worth $37\%$ and $48\%$. This is the quantitative content of the regime distinction: when the binding constraint is a physical deficit of tens of gigawatts, better information cannot manufacture capacity, and the marginal value of transparency is second order; when the binding constraint runs through forecast uncertainty, it is first order. The two experiments measure the same controller in the two regimes.

\paragraph{Full solve versus the first-order closure.}
At $\rho^\star$, forward simulation under the solved feedback gives a realized social cost of $1.395\times10^5$, against $1.548\times10^5$ for the first-order closure $\partial_{Y_a}V_a\approx\Gamma_a Y_a^+$ and $1.507\times10^5$ for autarky. The full solve improves on the closure by $\mathbf{9.9\%}$ and on autarky by $\mathbf{7.4\%}$, and reduces energy not served by $4.8\%$ relative to autarky (Fig.~\ref{fig:welfare}).

\paragraph{Coupling under islanding.}
The $7.4\%$ coupling gain is small precisely because of ERCOT's $0.82$~GW tie: mutual aid cannot relieve a $\sim\!25$~GW deficit through a sub-GW interface.

\begin{figure*}[ht]\centering
\includegraphics[width=0.92\textwidth]{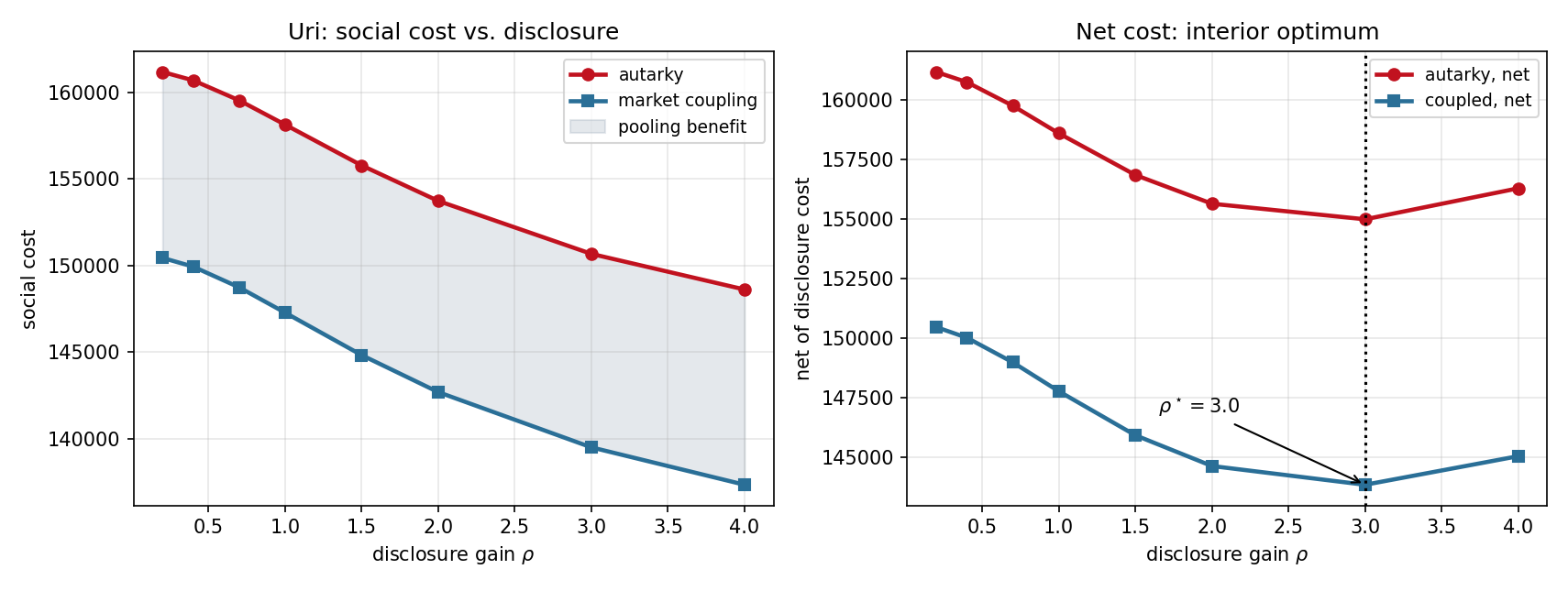}
\caption{Experiment 1: the information lever on the Uri calibration. Left: social cost against the disclosure gain, under autarky and under market coupling; the shaded pooling benefit widens with disclosure, so the two levers are complements. Right: net of the disclosure price, the optimum is interior at $\rho^\star=3.0$.}
\label{fig:uri_disclosure}
\end{figure*}

\begin{figure*}[ht]\centering
\includegraphics[width=0.92\textwidth]{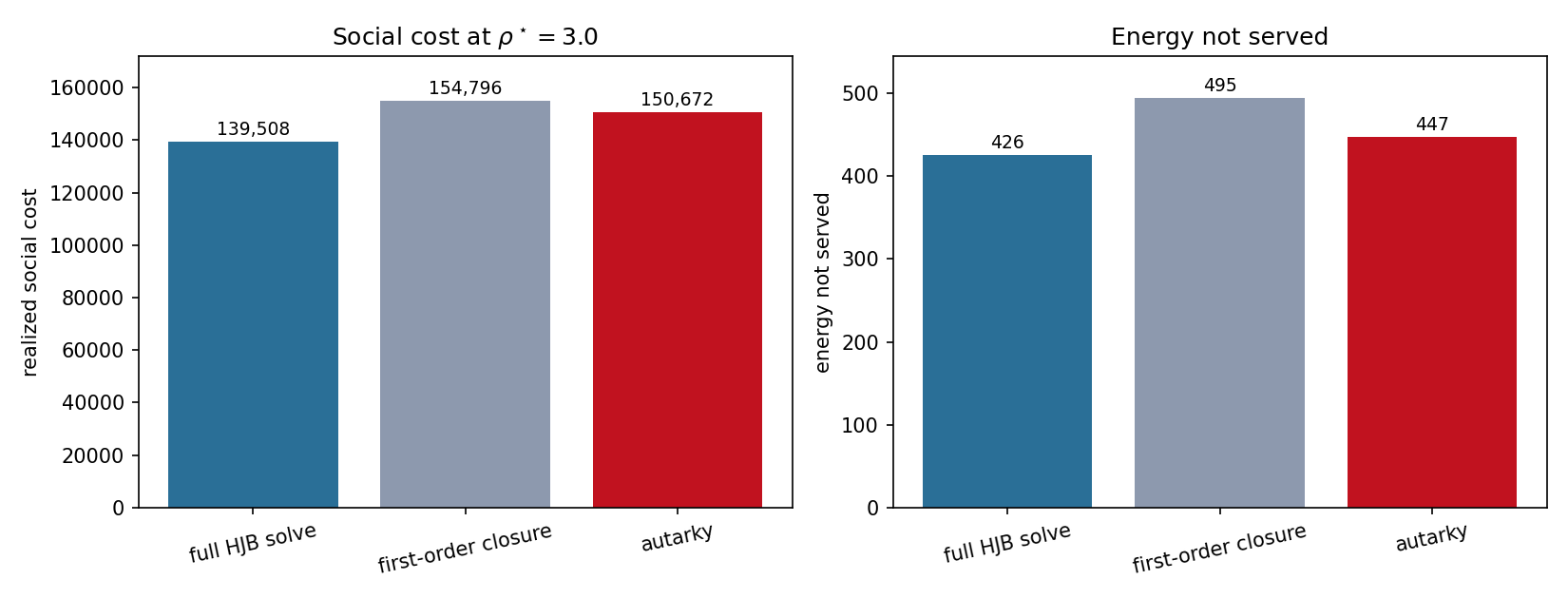}
\caption{Experiment 1: realized social cost and energy not served at the optimal disclosure level: full HJB solve vs.\ first-order closure vs.\ autarky, under the Uri calibration.}
\label{fig:welfare}
\end{figure*}

\paragraph{Groves transfers on the real topology.}
The efficiency collapse solves the planner problem without transfers, but the transfers themselves are recovered from leave-one-out marginal contributions at $\rho^\star$, estimated under common random numbers: area $a$'s export capability is removed and the increase in the remaining areas' realized cost is recorded. MISO's marginal contribution is $M_{\mathrm{MISO}}\approx10.1$k (removing its imports raises the coupled cost from $1.395\times10^5$ to $1.496\times10^5$); SPP's is $M_{\mathrm{SPP}}\approx11.3$k (severing the ERCOT--SPP tie raises ERCOT's cost from $1.355\times10^5$ to $1.468\times10^5$); and the stricken importer contributes essentially nothing, $M_{\mathrm{ERCOT}}\approx-0.18$k, its presence slightly \emph{raising} SPP's cost through the export burden. The Groves payments therefore compensate the exporters and charge the chief beneficiary, exactly as Theorem~\ref{thm:ic} prescribes: SPP receives its marginal contribution, which exceeds its $0.18$k participation burden by two orders of magnitude, making participation individually rational, while ERCOT is charged. Two structural features are visible. First, the sub-GW interface caps not only the welfare gain but the mechanism's transfer volume. Second, the marginal contributions do not sum to the total coupling surplus ($\approx11.2$k), the gap being $\approx10.1$k, which is the numerical face of the Green--Laffont budget imbalance of Section~\ref{sec:gl}.

\subsubsection{Sensitivity analysis: interregional transfer capability}\label{sub:sens}
The decisive structural parameter of the calibration is the ERCOT--SPP tie capacity, fixed by the realized 2021 topology at $0.82$~GW. Sweeping it toward the FERC/DOE recommended $20$--$25\%$ of peak, and holding disclosure at $\rho^\star$, social cost falls monotonically, by about
$35\%$ and energy not served by about $28\%$ at $10$ GW ($13\%$ of peak), with sharply
diminishing returns beyond $\sim\!8$--$10$ GW (Fig.~\ref{fig:tie}). The
saturation has a physical reading: the marginal value of ERCOT interconnection is
bounded by its neighbors' \emph{simultaneous} scarcity, since SPP and MISO are
themselves capacity-short during the same correlated event. Correlated regional
stress thus caps the benefit of point-to-point ties---an argument the present
information/common-factor framework is built to express. The comparison of the full solve against the first-order closure above serves as a solution-method sensitivity check: the reported magnitudes are properties of the calibrated problem, not artifacts of the closure. The tie sweep is run at the net-optimal disclosure level, so the transfer and information levers are jointly at their optima throughout.

\begin{figure*}[ht]\centering
\includegraphics[width=0.72\textwidth]{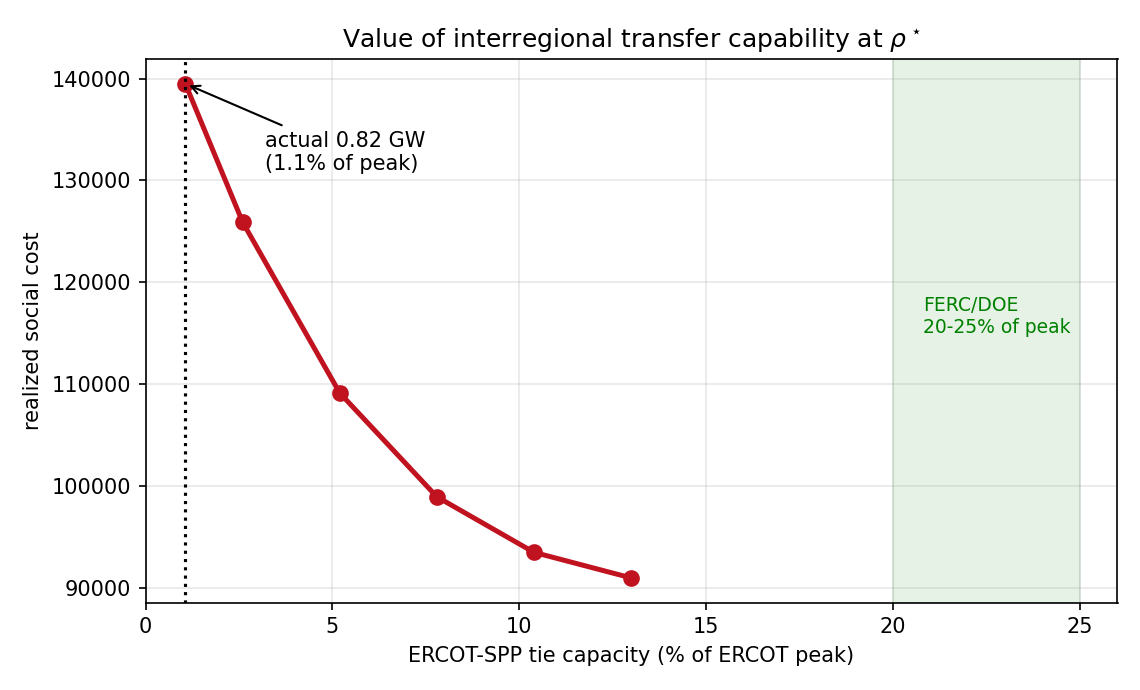}
\caption{Experiment 1: Value of interregional transfer capability: realized social cost vs.\
ERCOT--SPP tie capacity. Actual $0.82$ GW marked; FERC/DOE $20$--$25\%$ band shaded.}
\label{fig:tie}
\end{figure*}

\paragraph{Numerical verification of the viscosity theory.}
The solved value function is semiconcave: the curvature $\partial^2_{Y_E}W$ is
bounded above (Fig.~\ref{fig:value}, right), and the bang-bang switching locus
$\{\partial_{Y_E}W=\partial_{Y_S}W\}$ is a one-dimensional curve---a Lebesgue-null
set in the state plane (Fig.~\ref{fig:value}, left)---exactly as the semiconcavity
proposition predicts. The optimal feedback is therefore single-valued almost
everywhere and the closed loop is a well-posed Filippov flow.

\begin{figure*}[ht]\centering
\includegraphics[width=\textwidth]{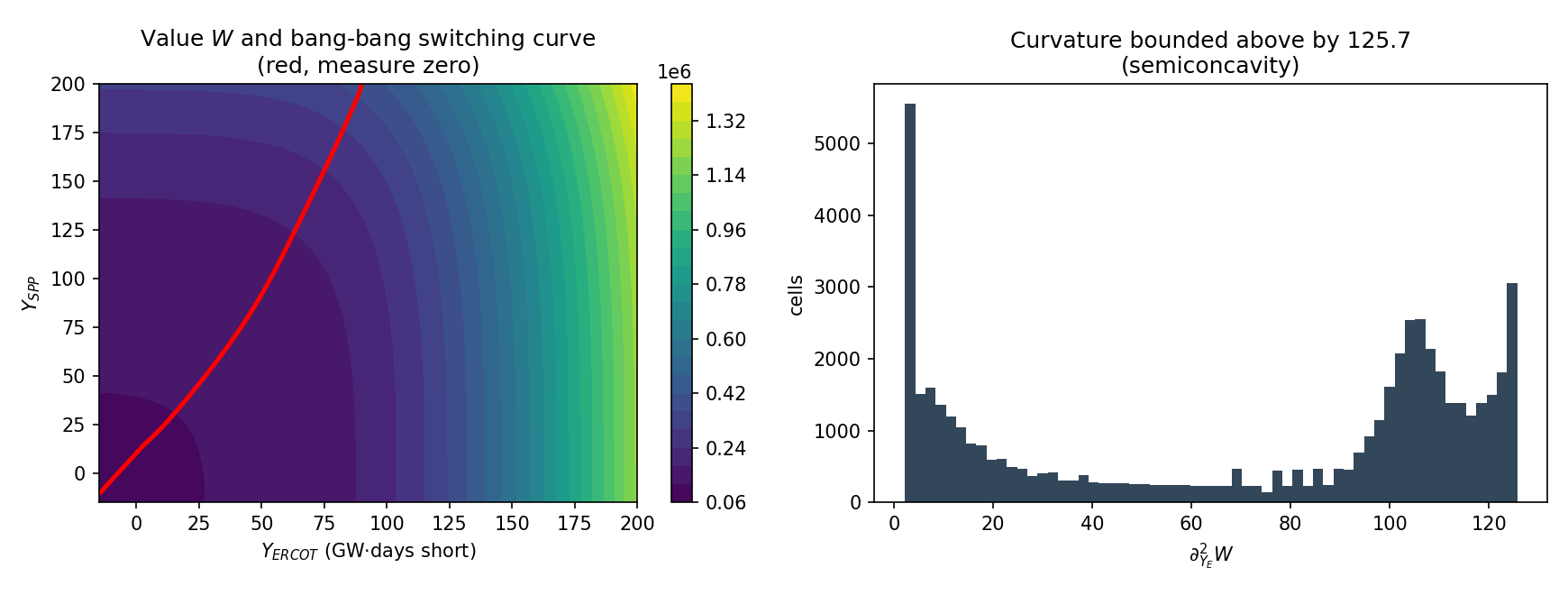}
\caption{Experiment 1. Left: solved value $W$ and the bang-bang switching curve
$\partial_{Y_E}W=\partial_{Y_S}W$ (measure zero). Right: curvature distribution,
bounded above (semiconcavity).}
\label{fig:value}
\end{figure*}

\subsection{Experiment 2: The Value of Disclosure under Common Risk}\label{sub:exp2}
We now consider three renewable-heavy zones (Iberia/solar, North~Sea/wind, Nordics/hydro) under a common continental weather factor (``Dunkelflaute''). Net-load deviations follow a one-factor structure $N_a = b_a F + \zeta_a$, with $F$ and $\zeta_a$ independent Ornstein--Uhlenbeck processes. There are no physical caps: generation is elastic, and each area must carry precautionary reserve $r=z^\star\sigma$ to meet a reliability target, at holding cost $\kappa_r$ per unit and shortfall penalty $\Gamma$. A public advisory reveals the systemic factor $F$ with gain $\rho$ (the disclosure lever), while each area has a fixed local sensor of its own $\zeta_a$. The steady belief covariance $P(\rho)$ solves the continuous algebraic Riccati equation, and disclosure lowers the common-mode variance $P_{FF}(\rho)$. Calibration is in Table~\ref{tab:exp2}.

The calibration reflects documented European \\``Dunkelflaute'' statistics ($1.6$ events/year, Nov--Jan, highest exposure in correlated northern offshore-wind fleets, cross-border integration cutting drought frequency only $\sim\!9\%$)--the un-poolable common-mode signature the disclosure layer must address.

\begin{table}[h]
\centering
\small
\caption{Experiment 2 calibration (latent state $Z=[F,\zeta_1,\zeta_2,\zeta_3]$).}\label{tab:exp2}
\begin{tabular}{lcccc}
\toprule
Parameter & $F$ & Iberia & North Sea & Nordics \\
\midrule
Mean reversion $\kappa$        & 0.40 & 0.80 & 0.80 & 0.80 \\
Process volatility $\sigma$    & 1.00 & 0.60 & 0.90 & 0.50 \\
Factor loading $b_a$           & --   & 0.70 & 1.20 & 0.50 \\
\bottomrule
\end{tabular}
\\[2pt]
\footnotesize Local sensor precision $\gamma{=}0.8$; reserve-holding cost $\kappa_r{=}2$, shortfall penalty $\Gamma{=}12$, giving optimal reserve quantile $z^\star{=}0.967$ and unit uncertainty cost $g(z^\star){=}2.998$; observation noise $R{=}I$; disclosure swept $\rho\in[0.2,4.0]$.
\end{table}

As disclosure rises from $\rho=0.2$ to $4.0$, the common-mode variance falls from $P_{FF}=1.18$ to $0.23$, and expected uncertainty cost falls monotonically (Figure~\ref{fig:exp2_cost}): by $37.4\%$ under autarky ($9.03\to5.66$) and by $47.5\%$ under coupling ($8.26\to4.33$). The larger reduction under coupling is the central qualitative finding. Because a common factor cannot be diversified by pooling reserves, market coupling on its own captures only the idiosyncratic share of the risk; it is \emph{disclosure} that attacks the systemic common mode. Consequently the value of coupling---the pooling benefit, shaded in Figure~\ref{fig:exp2_cost}---\emph{grows} with disclosure, from $0.77$ to $1.32$. This complementarity is the content of Proposition~\ref{prop:compl} (Section~\ref{sec:power}): under a common factor, information design and market coupling are complements, and the monotonicity condition~\eqref{eq:monocond}---which holds throughout this calibrated sweep, where the shaded gap in Figure~\ref{fig:exp2_cost} widens---governs when the effect is monotone in the disclosure level.

\begin{figure*}[ht]
\centering
\includegraphics[width=0.74\textwidth]{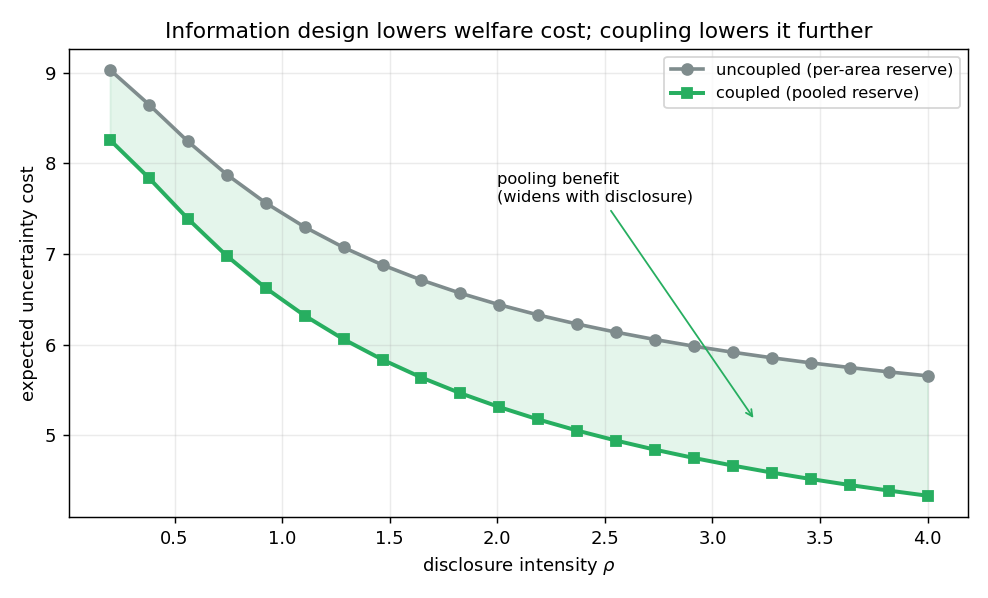}
\caption{Experiment 2. Expected uncertainty cost vs.\ disclosure intensity. Both regimes decrease monotonically; the shaded gap is the pooling benefit, which widens with disclosure.}\label{fig:exp2_cost}
\end{figure*}

\subsection{Discussion}\label{sub:disc}
Together the experiments separate the two levers of the master problem~\eqref{eq:master} while running both. Experiment~1 activates the transfer channel and the information channel simultaneously on the actual 2021 topology. It shows that ERCOT's near-islanded $0.82$~GW interconnection sharply caps the coupling benefit at $7.4\%$; that the tie-capacity sensitivity analysis quantifies the welfare a FERC/DOE-scale interface would unlock, about $35\%$ by $10$~GW with saturation beyond; that the solved value function numerically verifies the semiconcavity and measure-zero switching set predicted by the viscosity theory; and that the leave-one-out Groves estimates rank the areas as economics predicts, compensating the well-resourced exporters and charging the stricken importer. It also prices the information lever in that regime: with the belief calibrated to ERCOT's own $9.6$~GW miss against its extreme-weather scenario, disclosure is worth $8.7\%$ of social cost, with an interior optimum at $\rho^\star=3.0$.

Experiment~2 places the same controller in the regime where forecast uncertainty, rather than physical capacity, is the binding margin, and there the same lever is worth $37\%$ under autarky and $48\%$ under coupling. The contrast is the point. The transparency matrix $\rho_t$ is the regulator's information-design lever, controlling $\Pi_t$ through \eqref{eq:riccati} in both experiments; its welfare value is an order of magnitude smaller when a physical deficit of tens of gigawatts binds, because information cannot manufacture capacity, and first order when the deficit runs through belief. The complementarity survives in both: on Uri the pooling benefit widens from $10{,}731$ to $11{,}266$ as disclosure rises, and in the common-risk calibration the shaded gap of Figure~\ref{fig:exp2_cost} widens likewise. Under a common factor the two instruments are complements, so they should not be treated as substitutes; condition~\eqref{eq:monocond} says when this is monotone in the disclosure level, and fails when the factor is concentrated on one area and hence not truly systemic.

\section{Conclusion}\label{sec:conclusion}
We have posed information control of a differential game as a control problem and shown that adjoining a transfer rule makes it tractable. Incentive alignment turns the lower level into a potential game, so the bilevel problem collapses to a single stochastic control problem (Theorem~\ref{thm:collapse}) and the leader's first-order condition is exact by Danskin's theorem (Theorem~\ref{thm:verify}), a simplification that holds because of the mechanism rather than despite the bilevel structure. The lower level is well posed in its own right: the equilibrium feedback is saturated on strictly convex components and bang-bang on linear ones (Theorem~\ref{thm:nash_feedback}), and exists and is unique up to null modification (Theorem~\ref{thm:nash_exist}), with truthful reporting dominant and efficient action the induced Nash response (Theorem~\ref{thm:ic}).

Under publicly observed disruption epochs the belief filter is exactly finite-dimensional (Lemma~\ref{lem:proj_filter}), so the master value is well defined on a finite-dimensional state and satisfies an integro-differential Hamilton--Jacobi--Bellman equation (Theorem~\ref{thm:master_robust}). That equation admits a unique viscosity solution (Theorems~\ref{thm:existence} and~\ref{thm:comparison}), verification holds without smoothness (Theorem~\ref{thm:viscverify}), and semiconcavity makes the switching set Lebesgue-null (Proposition~\ref{prop:semiconcave}), so the discontinuous feedback generates a well-posed Filippov flow.

The formulation nests both established levers as limits. With the transfer inactive it is information control, and supplies what that lever has lacked: a state, the belief pair; an input, the disclosure gain; a state equation, the Riccati flow; a quadratic input cost; and a value verified in the viscosity sense. What is steered is the agents' uncertainty rather than a physical variable. With the disclosure gain fixed it is a continuous-time marginal-contribution mechanism.

The multi-area power systems instantiation prices both levers in one experiment: the transfer removes $7.4\%$ of social cost and disclosure, optimized jointly, $8.7\%$; where forecast uncertainty rather than capacity binds, disclosure is worth $37\%$ under autarky and $48\%$ under coupling, information being unable to manufacture capacity. Any system with a committed coordinator, transferable utility, observable disruptions, and agents pooling a shared buffer instantiates \eqref{eq:IMC}. 

\appendix
\section{Technical Proofs}

\subsection{Proof of Theorem~\ref{thm:nash_exist} (Equilibrium Existence)}\label{proof:theorem_nash_exist_proof}
\emph{(i) Existence of a social optimizer.} Fix $\rho\in\Ucal_L$ and condition on the observed epoch path (Lemma~\ref{lem:reduction}), so $\Pi$ is a known coefficient on each inter-jump interval. The social problem is a standard finite-horizon stochastic control problem with state $(\hat X,Y)$, compact convex control sets \eqref{eq:caps}, coefficients affine in $u$, and running cost convex in $u$ (Assumption~\ref{ass:data}); its Hamiltonian therefore attains a measurable pointwise minimizer (Berge's maximum theorem plus the Kuratowski--Ryll-Nardzewski measurable-selection theorem). By Proposition~\ref{prop:single_agent}, the value function is the unique viscosity solution of its HJB equation, an optimal Markov feedback $u^\star$ exists, and the closed loop is well posed with an a.e.\ single-valued feedback. Proposition~\ref{prop:single_agent} is established directly from the external theory of viscosity solutions for the single-agent problem, with no equilibrium or incentive construction, so it precedes and is independent of the present theorem.

\emph{(ii) $u^\star$ is a Nash equilibrium.} Under the Groves transfer \eqref{eq:groves}, Agent $a$'s transfer-adjusted objective \eqref{eq:cost} is its own cost minus the pivot $M_{a}$, and $M_{a}$ is built from the \emph{rivals'} value functions $V_b$, $b\neq a$, and their no-coupling baselines, both independent of $a$'s own control. Adding the control-independent constant $R_{a}$ and the rivals' costs (which $a$ does not choose), $a$'s objective equals the social cost $\E\int_0^T\ell\,\diff t$ up to terms independent of $u_a$. Hence the lower level is a \emph{potential game} with potential $\E\int_0^T\ell\,\diff t$, and the Nash property follows from the next lemma.

\begin{lemma}[Partial-minimization consistency]\label{lem:partialmin}
Let the admissible class factor as a product across agents, $\Ucal_O=\prod_a\Ucal_a$ (as it does here: the constraints \eqref{eq:caps} bind componentwise and adaptedness is imposed per agent), and let $\Dcal:\Ucal_O\to\R$ be any functional with global minimizer $u^\star$. Then for each $a$, $u^\star_a$ minimizes $u_a\mapsto \Dcal(u_a,u^\star_{-a})$ over $\Ucal_a$. If in addition $\Dcal$ is induced by a running cost whose Hamiltonian is jointly convex in $u$, strictly convex in the block $u^{\mathrm c}$, and linear in the block $u^{\mathrm l}$, then off the gradient-tie switching set $\{\nabla_{Y_a}S=\nabla_{Y_b}S\}$---which is Lebesgue-null along the closed loop by Proposition~\ref{prop:single_agent}(c)---the pointwise joint Hamiltonian minimizer is unique and its $u_a$-component coincides with the unique minimizer of the Hamiltonian in $u_a$ at fixed $u^\star_{-a}$.
\end{lemma}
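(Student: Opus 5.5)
The first assertion is purely order-theoretic, and I would prove it directly from the product structure $\Ucal_O=\prod_a\Ucal_a$. Fix $a$ and any $u_a\in\Ucal_a$. Because the class factors, the profile $(u_a,u^\star_{-a})$ lies in $\Ucal_O$. This is where the factorization matters: a unilateral deviation leaves the rivals' feasible sets untouched, as recorded after Assumption~\ref{ass:data}. Global minimality of $u^\star$ then gives $\Dcal(u^\star)\le\Dcal(u_a,u^\star_{-a})$. Since $u^\star=(u^\star_a,u^\star_{-a})$, this is exactly the statement that $u^\star_a$ minimizes the section $u_a\mapsto\Dcal(u_a,u^\star_{-a})$. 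No convexity is used in this step. The only point to check is that adaptedness is imposed per agent, so that gluing a deviation $u_a$ to $u^\star_{-a}$ yields an admissible feedback. I would argue this from Definition~\ref{def:admissible} together with the well-posedness of the closed loop under Lipschitz coefficients and bounded controls, as in Proposition~\ref{prop:single_agent}(d).

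For the second assertion I would work pointwise at a fixed $(t,w,p)$ with $p=\nabla S$. Write the Hamiltonian as $h(u)=\sum_a h_a(u_a)$, where $h_a(u_a)=c_a(u_a^{\mathrm c})+\langle q_a^{\mathrm c},u_a^{\mathrm c}\rangle+\langle q_a^{\mathrm l},u_a^{\mathrm l}\rangle$. Two structural facts give this form: $c_a$ depends only on its own block (Assumption~\ref{ass:data}), and each $f_b$ is affine in $u$, so the cross terms $\sum_b p_{Y_b}f_b$ are linear in each block. The feasible set is the product $\prod_aU_a$, and $h$ is separable. Hence the joint minimization splits into independent minimizations of $h_a$ over $U_a$. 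Separability also means the minimizer of $h_a$ does not depend on $u^\star_{-a}$ at all, which gives the coincidence with the conditional minimizer at fixed $u^\star_{-a}$.

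Uniqueness I would handle block by block, as in Theorem~\ref{thm:nash_feedback}.
\begin{enumerate}
\item[(i)] On $u^{\mathrm c}$, strict convexity of $c_a$ plus a linear term over a convex compact section gives a unique minimizer.
\item[(ii)] On $u^{\mathrm l}$, a linear functional over a compact convex set has a unique minimizer exactly when the minimizing face is a vertex. For the box constraints in force, this holds whenever no component of $q_a^{\mathrm l}$ vanishes. In the transfer instantiation each such component is a gradient difference $p_{Y_b}-p_{Y_a}$, so uniqueness fails only on $\{\nabla_{Y_a}S=\nabla_{Y_b}S\}$.
\end{enumerate}
Combining (i) and (ii), the joint minimizer is unique off this tie set, and its $a$-component is the unique block minimizer.

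The main obstacle is the claim that the tie set is Lebesgue-null \emph{along the closed loop}. Proposition~\ref{prop:single_agent}(c) only gives a null singular set of the semiconcave value in state space. I would first invoke Proposition~\ref{prop:semiconcave}(b), which states that the switching set meets the kink set in a null set. Then I would appeal to Remark~\ref{rem:nullset}, so that only payoff-relevant occupation time matters. If the tie set has positive measure in some region, the linear components are indeterminate there. On such a set, however, the Hamiltonian is constant across the face, so any selection is optimal and yields the same cost. I would state the uniqueness conclusion as holding up to this payoff-irrelevant modification rather than claim more.
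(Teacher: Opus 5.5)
Your first claim is the paper's argument. The product structure $\Ucal_O=\prod_a\Ucal_a$ is a hypothesis of the lemma, so nothing needs deriving. The justification you sketch would not supply it anyway: Definition~\ref{def:admissible} requires a unique strong solution of the \emph{joint} closed loop, and gluing an arbitrary $u_a$ to a discontinuous $u^\star_{-a}$ need not preserve that. Proposition~\ref{prop:single_agent}(d) concerns only the optimal feedback, in the Filippov sense.

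For the second claim your route differs from the paper's. The paper works through optimality conditions:
\begin{itemize}
\item off the switching set the joint minimizer is unique;
\item its blockwise first-order, projection or vertex conditions hold;
\item convexity in $u_a$ at fixed $u^\star_{-a}$ makes those conditions sufficient for the partial problem;
\item strict convexity on $u^{\mathrm c}_a$ and the strict sign condition on $u^{\mathrm l}_a$ give uniqueness.
\end{itemize}
You instead use Assumption~\ref{ass:data} (own-block costs, $f_b$ affine in $u$, constraint set $\prod_aU_a$) to make the pointwise Hamiltonian additively separable across agents. The joint minimization then decouples and the coincidence is immediate.

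Your route is more elementary and yields more: the block minimizer is the same for every $u_{-a}$, not just $u^\star_{-a}$. But it uses structure the lemma's hypothesis does not state. A Hamiltonian that is jointly convex and strictly convex on $u^{\mathrm c}$ may couple different agents' $u^{\mathrm c}$-blocks, and the paper's route covers that case. The restriction is harmless here, because the lemma is only invoked under Assumption~\ref{ass:data} and its switching set is already written in terms of the transfer instantiation. Moreover, the general case needs neither KKT conditions nor separability. Your own first-claim argument, applied pointwise to $\Hcal$ on $\prod_aU_a$, shows that $u^\star_a$ minimizes $u_a\mapsto\Hcal(u_a,u^\star_{-a})$. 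Uniqueness of that partial minimizer then forces the coincidence.

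Your caution about the parenthetical nullness claim is well founded, and it goes beyond the paper, whose proof does not address it. Proposition~\ref{prop:single_agent}(c) controls the non-differentiability set of a semiconcave function. The set $\{\nabla_{Y_a}S=\nabla_{Y_b}S\}$ is instead a level set of a gradient difference. Even when it is Lebesgue-null in state space, the diffusion-free $Y$-dynamics can occupy it for positive time. In the symmetric case of Proposition~\ref{prop:special}, the optimal exchange pushes $Y_1-Y_2$ toward zero from both sides of $\{Y_1=Y_2\}$, so closed-loop paths reach that surface and slide along it.

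So the zero-occupation-time reasoning of Remark~\ref{rem:nullset} does not apply there. The part of your fallback that actually carries the weight is the observation that on the tie set the Hamiltonian is constant over the minimizing face. Hence every selection, and the Filippov convexification, remains a pointwise minimizer.
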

\begin{proof}
The first claim requires no convexity: for any $v_a\in\Ucal_a$, the profile $(v_a,u^\star_{-a})$ lies in $\Ucal_O$ by the product structure, so $\Dcal(v_a,u^\star_{-a})\ge \Dcal(u^\star)=\Dcal(u^\star_a,u^\star_{-a})$. For the second claim, off the switching set the joint minimizer is unique (strict convexity on $u^{\mathrm c}$; the linear program on $u^{\mathrm l}$ has a unique solution at an extreme point of $U_a$ determined by the strict sign condition), the first-order conditions $\nabla_{u_a}\Hcal=0$ (or the corresponding vertex/projection conditions at the box boundary) hold blockwise at the joint minimizer, and convexity in $u_a$ at fixed $u^\star_{-a}$ makes these same conditions sufficient for the partial problem; uniqueness of the partial minimizer follows from the same strict convexity on $u_a^{\mathrm c}$ and the strict sign condition on $u_a^{\mathrm l}$.
\end{proof}

\noindent Applying the lemma \ref{lem:partialmin} with $\Dcal(u)=\E\int_0^T\ell\,\diff t$: no unilateral deviation improves $a$'s transfer-adjusted objective, i.e.\ $u^\star$ is a Nash feedback equilibrium. Note the lemma's hypotheses are exactly our structure: joint \emph{strict} convexity fails (the Hamiltonian is linear on the block $u^{\mathrm l}$, which is what produces the bang-bang feedback), and the product form of the admissible class is what carries the first claim.
\emph{(iii) Uniqueness.} Strict convexity of $c_a$ in $u_a$ makes the pointwise Hamiltonian minimizer unique on the strictly convex block. The linear block enters affinely, so at gradient ties $\nabla_{Y_a}S=\nabla_{Y_b}S$ any tie-breaking is optimal; by Proposition~\ref{prop:single_agent}(c) this set is Lebesgue-null along the closed loop, giving uniqueness up to null modification. $\qed$

\subsection{Proof of Theorem~\ref{thm:ic} (Groves Incentive Compatibility)}\label{proof:theorem_ic_proof}
\emph{Setup and well-posedness.} Fix an arbitrary profile of rivals' reports $\hat c_{-a}$. Given the report vector $(\hat c_a,\hat c_{-a})$, the coordinator's implemented feedback $u^{\hat c}$ is the socially optimal Markov feedback for the reported data. By Proposition~\ref{prop:single_agent} applied to the reported-data problem (its hypotheses hold for any admissible report, since messages are $C^1$ strictly convex costs), this feedback exists, is essentially unique, and the associated value functions are the unique viscosity solutions of their HJB equations, hence well-defined non-anticipative functions of the current state $(t,\hat X,Y)$ only. In particular $V_b^{\text{NoCoupling}}$ is the value of $b$'s \emph{autarkic} single-agent problem and depends only on $b$'s report; the pivot $R_{a,t}$ is report-independent by construction. Transfers are $\Gfilt_t$-adapted and integrable. Indeed, the local state is bounded pathwise: from \eqref{eq:Ydyn}, the boundedness of $f_a$ on the compact admissible sets of Assumption~\ref{ass:data} gives $|\dot Y_{a,t}|\le \sup\{|f_a(t,\hat X,Y,u)|: u\in U(\hat X,\Nu)\}=:C_Y$, so $|Y_{a,t}|\le|Y_{a,0}|+C_Y T$ deterministically. For the belief mean, \eqref{eq:meanfilter} gives $|\hat X_t|\le|\hat X_0|+\int_0^t\big(\|A\|_\infty|\hat X_s|+\|B\|_\infty|\hat Q_s|\big)\diff s+\big|\!\int_0^t\Pi_s\rho_s^\top R^{-1/2}\diff\beta_s\big|$, where the stochastic-integral integrand is bounded by $\bar\pi\bar\rho\|R^{-1/2}\|$ (Lemma~\ref{lem:riccati}); taking $\sup_{t\le T}$, squaring, applying the Burkholder--Davis--Gundy inequality to the martingale term and Gr\"onwall's lemma to the drift yields $\E\big[\sup_{t\le T}|\hat X_t|^2\big]\le C\big(1+\E|\hat X_0|^2+\E\sup_{t\le T}|\hat Q_t|^2\big)\exp(CT)<\infty$, with the same bound for the Kalman--Bucy estimate $\hat F$ of the Ornstein--Uhlenbeck factor. Since the value functions are continuous with quadratic growth (Proposition~\ref{prop:single_agent}(a)), $\E\sup_{t\le T}|V_b(t,\hat X_t,Y_t)|<\infty$ and the transfers \eqref{eq:groves} are integrable on $[0,T]$, pathwise bounded on each inter-jump interval.

\emph{(i) Reporting dominance.} Write $\Dcal^{\text{true}}_a(u)$ for $a$'s expected true cost under an action profile $u$, and $\Dcal^{\hat c}_{\text{soc}}(u)=\Dcal^{\hat c_a}_a(u)+\sum_{b\neq a}\Dcal^{\hat c_b}_b(u)$ for the reported social cost. Under \eqref{eq:groves}, $a$'s expected net cost when reporting $\hat c_a$ is

\begin{equation}\label{eq:ic_decomp}
\Dcal^{\text{true}}_a\big(u^{\hat c}\big) + \sum_{b\neq a} \Dcal^{\hat c_b}_b\big(u^{\hat c}\big)\;-\;\sum_{b\neq a} V_b^{\text{NoCoupling}}(\hat c_b) +\;\E\!\int_0^T R_{a,t}\,\diff t,
\end{equation}

where the last two terms are independent of $\hat c_a$ (the baselines depend only on rivals' reports; the pivot by construction). The first two terms equal the \emph{mixed} social cost $\Dcal^{(c_a,\hat c_{-a})}_{\text{soc}}(u^{\hat c})$: the total cost of the implemented profile measured with $a$'s \emph{true} cost and the rivals' reported costs. The implemented profile $u^{\hat c}$ minimizes the reported social cost $\Dcal^{(\hat c_a,\hat c_{-a})}_{\text{soc}}$ \emph{over the class of $\Gfilt_t$-adapted Markov feedbacks}: this is Theorem~\ref{thm:nash_exist}(i) applied to the reported data, whose hypotheses hold because the message space consists of $C^1$, strictly convex cost functions, so Assumption~\ref{ass:data} is satisfied by any admissible report profile. Moreover the \emph{mixed} criterion $\Dcal^{(c_a,\hat c_{-a})}_{\text{soc}}$ is itself convex in $u$---it is the sum of $a$'s true convex cost and the rivals' reported convex costs, over the same convex feasible set---so the mixed problem is well-posed by the same theorem. Setting $\hat c_a=c_a$ makes the implemented profile the exact minimizer of the mixed criterion that $a$'s net cost \eqref{eq:ic_decomp} tracks. For any misreport, $u^{(c^D_a,\hat c_{-a})}$ is feasible for the mixed criterion but generally suboptimal for it, so
$\Dcal^{(c_a,\hat c_{-a})}_{\text{soc}}\big(u^{(c^D_a,\hat c_{-a})}\big)\ \ge\ \Dcal^{(c_a,\hat c_{-a})}_{\text{soc}}\big(u^{(c_a,\hat c_{-a})}\big)$,
with the inequality holding \emph{pointwise in the rivals' report profile}---no step above uses truthfulness of the rivals---and hence in expectation under any belief $a$ may hold about them. Truthful reporting is therefore a dominant strategy, and strict suboptimality obtains whenever the misreport changes the implemented profile on a set of positive measure.

\emph{(ii) Obedience.} Given truthful reports, Theorem~\ref{thm:nash_exist}(ii) shows the efficient profile $u^\star$ is a Nash feedback equilibrium of the transfer-adjusted game: each agent's objective differs from the social potential by control-independent terms, and partial minimization at the joint minimum of the convex potential yields unilateral optimality. A unilateral Markov deviation by $a$ changes $a$'s net cost by the corresponding change in the social potential, which is nonnegative by optimality of $u^\star$. The restriction to Markov deviations is without loss of generality: for the convex-Hamiltonian problem at hand, the value over general $\Gfilt_t$-progressively measurable controls coincides with the value over Markov feedbacks, since by Proposition~\ref{prop:single_agent} the viscosity value bounds every progressive control below and is attained by the Markov feedback---so no progressively measurable deviation can improve on $u^\star_a$ either.

\emph{(iii) Uniqueness up to null sets.} By Proposition~\ref{prop:single_agent}(b)--(c) applied to the reported-data planner problem, the socially optimal feedback is unique wherever the value is $Y$-differentiable, which is Lebesgue-a.e.; the strictly convex components solve a strictly convex program and are pointwise unique, while the transfer components are bang-bang and are pinned down except on the switching set, which is Lebesgue-null. Two implemented feedbacks thus agree a.e.\ in $(t,\hat X,Y)$. Expected running cost and transfers are time integrals of the feedback along the closed loop, hence unchanged by modification on a Lebesgue-null set; the two feedbacks therefore yield identical expected running cost and transfers, so the implemented equilibrium is payoff-unique. $\qed$

\subsection{Proof of Theorem~\ref{thm:master_robust} (Master HJB)}\label{proof:theorem_master_proof}
\emph{Step 1: the controlled state is Markov under $\Gfilt_t$.}
Fix $\rho\in\Ucal_L$. By Lemma~\ref{lem:proj_filter} the conditional law of $X_t$ given $\Gfilt_t$ is exactly $\mathcal N(\hat X_t,\Pi_t)$, so $(\hat X_t,\Pi_t)$ is a sufficient statistic and no information is lost by replacing the partially observed problem \eqref{eq:problemP} with the fully observed problem in the variables $(\hat X,\Pi,Y)$. By Lemma~\ref{lem:reduction}, conditionally on the observed epoch path $\Nu_t$ the process $\Pi$ is the unique solution of an ODE and is independent of $\beta$, of the state, and of the agents' controls. Together with \eqref{eq:Ydyn} this makes $w_t:=(\hat X_t,\Pi_t,Y_t)$ a controlled Markov process with respect to $\Gfilt_t$, with generator computed in Step 3.

\emph{Step 2: dynamic-programming principle.}
The running cost is continuous and, by Assumption~\ref{ass:data}, the agent control set of \eqref{eq:caps} is compact and state-dependent in a Lipschitz, hence measurable, way; the disclosure set $\Ucal_L$ is compact by Assumption~\ref{ass:riccati}. The value \eqref{eq:value} is therefore finite and, by Berge's maximum theorem together with the Kuratowski--Ryll-Nardzewski selection theorem, the pointwise infimum over $\rho$ is attained by a measurable selector. The DPP
\[
S(t,w)=\inf_{\rho\in\Ucal_L}\ \E\Big[\int_t^{t+h}\!\!\ell\,\diff s+S\big(t+h,w_{t+h}\big)\;\Big|\;w_t=w\Big]
\]
then follows by the standard argument for controlled Markov processes~\cite{FlemingSoner,YongZhou}, valid for $h>0$ small.

\emph{Step 3: the generator, including the epoch term.}
The state $w$ is a piecewise-deterministic jump-diffusion: between epochs $\hat X$ solves \eqref{eq:meanfilter} with $\Gfilt_t$-innovation diffusion coefficient $G_\rho=\Pi\rho^\top R^{-1/2}$ (Lemma~\ref{lem:proj_filter}), $\Pi$ follows the Riccati drift $\mathcal R(\Pi,\rho)$, and $Y$ follows \eqref{eq:Ydyn}; at each epoch, which arrives with intensity $\lambda$, the mean is unchanged and the covariance inflates by $\Sigma_J$ (Section~\ref{sec:signal}). Applying the It\^o--L\'evy formula for such a process to $S\in C^{1,2}(\mathcal D)$ over $[t,t+h]$,

\begin{equation*}
S(t{+}h,w_{t+h}) - S(t,w)
= \int_t^{t+h}\!\!\Big(\partial_s S + \Lcal^\rho S\Big)\diff s + M_{t,t+h},
\end{equation*}

where the generator is
\begin{align*}
\Lcal^\rho S &= \nabla_{\hat X}S^\top\!\big(A_s\hat X+B\hat Q_s\big)
+\tfrac12\Tr\!\big(\nabla^2_{\hat X}S\,G_\rho G_\rho^\top\big) \\
&\quad+\Tr\!\big(\partial_\Pi S\,\mathcal R(\Pi,\rho)\big)
+\textstyle\sum_a \nabla_{Y_a}S^\top \dot Y_a
+\lambda\,\Ical S,
\end{align*}
with $G_\rho G_\rho^\top=\Pi\rho^\top R^{-1}\rho\Pi$, and where the nonlocal term arises as the compensator of the epoch process: the epochs form a Poisson process of intensity $\lambda$ and each maps $\Pi\mapsto\Pi+\Sigma_J$ deterministically, so the compensated jump measure contributes exactly $\lambda\big[S(\cdot,\Pi+\Sigma_J,\cdot)-S(\cdot,\Pi,\cdot)\big]=\lambda\Ical S$. Since the marks are unobserved and centred, no integral against $\mathcal N(\mathbf 0,\Sigma_J)$ appears in the $\hat X$-argument; as recorded in Section~\ref{sec:master}. Note that $\mathcal R$ carries no $\lambda\Sigma_J$ term: the predictive rate of \eqref{eq:riccati} is the \emph{average} of these resets and would double-count them here.

\emph{Step 4: the local martingale is a true martingale.}
The remainder is
$M_{t,t+h}=\int_t^{t+h}\nabla_{\hat X}S^\top G_\rho\,\diff\beta_s + \widetilde M_{t,t+h}$,
with $\widetilde M$ the compensated epoch martingale. By Lemma~\ref{lem:riccati}, $\underline\pi I\preceq\Pi_s\preceq\bar\pi I$ and $\|\rho_s\|\le\bar\rho$, so $\|G_\rho\|\le\bar\pi\bar\rho\|R^{-1/2}\|$ is bounded. Since $S$ has polynomial growth and $\E[\sup_{s\le T}|w_s|^2]<\infty$ by the estimate in the proof of Theorem~\ref{thm:ic} (bounded $Y$ pathwise; Burkholder--Davis--Gundy and Gr\"onwall for $\hat X$), the integrand is square-integrable and the stochastic integral has zero expectation. The epoch martingale is a compensated jump martingale with bounded jumps, $|\Ical S|$ being bounded on compacts by the polynomial growth of $S$ and the boundedness of $\Sigma_J$, and with at most $K$ jumps per horizon (Assumption~\ref{ass:riccati}); it is therefore a true martingale of zero mean. Hence $\E[M_{t,t+h}]=0$.

\emph{Step 5: passage to the limit.}
Taking expectations in Step 3, inserting into the DPP of Step 2, and dividing by $h$,
\[
0=\inf_{\rho\in\Ucal_L}\ \frac1h\,\E\int_t^{t+h}\Big(\ell(s,w_s,u^\star_s,\rho)+\partial_s S+\Lcal^{\rho}S\Big)\diff s .
\]
The integrand is continuous in $s$ and uniformly bounded on compacts, uniformly in $\rho$ over the compact $\Ucal_L$; letting $h\downarrow0$ and using continuity of the infimum over a compact set (Berge) gives
$\partial_t S+\inf_{\rho\in\Ucal_L}\{\ell+\Lcal^\rho S\}=0$ at $(t,w)$, which is \eqref{eq:isaacs} once $\ell$ is written as $\sum_a\mathcal C^{\mathrm{macro}}_a$ and the $\rho$-independent term $\lambda\Ical S$ is taken outside the minimization. The terminal condition is immediate from $\eqref{eq:value}$ at $t=T$. The infimum is attained because $\Ucal_L$ is compact and the bracket is continuous in $\rho$, so it may be written as a minimum.$\qed$

\bibliographystyle{cas-model2-names}

\bibliography{references}

\end{document}